\documentclass[11pt]{article}
\usepackage{amsmath}
\usepackage{amsfonts}
\usepackage{amssymb}
\usepackage{mathrsfs}
\usepackage{amsthm}
\usepackage{palatino}
\usepackage[margin=2.5cm, vmargin={1.5cm}]{geometry}

\usepackage{dcolumn}

\usepackage{times}
\usepackage{graphicx}
\usepackage{color}

\usepackage{pstricks}
\usepackage{pst-plot}
\usepackage{pst-grad}

\newrgbcolor{LemonChiffon}{1.0 0.98 0.8}
\newrgbcolor{magenta}{1.0 0.3 0.4}
\newrgbcolor{mistyrose}{1.0 0.9 0.8}
\newrgbcolor{deeppink}{1.0 0.08 0.58}
\newrgbcolor{lightsalmon}{1.0 0.63 0.48}
\newrgbcolor{lightred}{0.7 0.0 0.0}
\newrgbcolor{lightblue}{0.8 0.8 1.0}
\newrgbcolor{indianred}{0.80  0.36  0.36}
\newrgbcolor{byellow}{0.93 0.76 0.3}
\newrgbcolor{lightorange}{1 0.9 0.8}

\renewcommand\footnotemark{}

\begin{document}

\title{Formulas for non-holomorphic Eisenstein series and for the Riemann zeta function at odd integers}

\author{
  Cormac ~O'Sullivan\footnote{{\it Date:} Aug 16, 2018.
\newline \indent \ \ \
  {\it 2010 Mathematics Subject Classification:} 11F30, 11F37, 11M06.
  \newline \indent \ \ \
Support for this project was provided by a PSC-CUNY Award, jointly funded by The Professional Staff Congress and The City
\newline \indent \ \ \
University of New York.}
  }

\date{}

\maketitle

\def\s#1#2{\langle \,#1 , #2 \,\rangle}

\def\H{{\mathbb{H}}}
\def\F{{\frak F}}
\def\C{{\mathbb C}}
\def\R{{\mathbb R}}
\def\Z{{\mathbb Z}}
\def\Q{{\mathbb Q}}
\def\N{{\mathbb N}}
\def\ei{{\mathbb E}}
\def\es{{\text{\boldmath $E$}}}
\def\ai{{\mathbb A}}
\def\bl{{\mathbb L}}
\def\G{{\Gamma}}
\def\GH{{\G \backslash \H}}
\def\g{{\gamma}}
\def\L{{\Lambda}}
\def\ee{{\varepsilon}}
\def\K{{\mathcal K}}
\def\Re{\mathrm{Re}}
\def\Im{\mathrm{Im}}
\def\PSL{\mathrm{PSL}}
\def\SL{\mathrm{SL}}
\def\Vol{\operatorname{Vol}}
\def\lqs{\leqslant}
\def\gqs{\geqslant}
\def\sgn{\operatorname{sgn}}
\def\res{\operatornamewithlimits{Res}}
\def\li{\operatorname{Li_2}}
\def\lip{\operatorname{Li}'_2}
\def\pl{\operatorname{Li}}
\def\nb{{\mathcal B}}
\def\cc{{\mathcal C}}
\def\nd{{\mathcal D}}
\def\dd{\displaystyle}

\def\clp{\operatorname{Cl}'_2}
\def\clpp{\operatorname{Cl}''_2}
\def\farey{\mathscr F}

\def\ca{{\frak a}}
\def\cb{{\frak b}}
\def\cc{{\frak c}}
\def\cd{{\frak d}}
\def\ci{{\infty}}

\def\sa{{\sigma_\frak a}}
\def\sb{{\sigma_\frak b}}
\def\sc{{\sigma_\frak c}}
\def\sd{{\sigma_\frak d}}
\def\si{{\sigma_\infty}}

\newcommand{\stira}[2]{{\genfrac{[}{]}{0pt}{}{#1}{#2}}}
\newcommand{\stirb}[2]{{\genfrac{\{}{\}}{0pt}{}{#1}{#2}}}
\newcommand{\norm}[1]{\left\lVert #1 \right\rVert}

\newcommand{\e}{\eqref}


\newtheorem{theorem}{Theorem}[section]
\newtheorem{lemma}[theorem]{Lemma}
\newtheorem{prop}[theorem]{Proposition}
\newtheorem{conj}[theorem]{Conjecture}
\newtheorem{cor}[theorem]{Corollary}
\newtheorem{assume}[theorem]{Assumptions}

\newcounter{coundef}
\newtheorem{adef}[coundef]{Definition}

\newcounter{counrem}
\newtheorem{remark}[counrem]{Remark}

\renewcommand{\labelenumi}{(\roman{enumi})}
\newcommand{\spr}[2]{\sideset{}{_{#2}^{-1}}{\textstyle \prod}({#1})}
\newcommand{\spn}[2]{\sideset{}{_{#2}}{\textstyle \prod}({#1})}

\numberwithin{equation}{section}

\bibliographystyle{alpha}

\begin{abstract}
New expressions are given for the Fourier expansions of non-holomorphic  Eisenstein series with weight $k$. Among other applications, this leads to  non-holomorphic analogs of  formulas of Ramanujan, Grosswald and Berndt containing Eichler integrals of holomorphic Eisenstein series.
\end{abstract}

\section{Introduction}
\subsection{Eisenstein series}
Let $\G =\SL_2(\Z)$ act on the upper half plane $\H$ in the usual way, with $\G_\ci=\left\{ \pm (\smallmatrix 1
& n \\ 0 & 1 \endsmallmatrix ) : n\in \Z\right\}$ the subgroup of translations fixing $\ci$. Throughout it is assumed that  $z=x+iy \in \H$. The holomorphic Eisenstein series are basic modular forms with definition
\begin{equation}\label{esd}
E_k(z):= \sum_{\g \in \G_\ci \backslash \G} \frac{1}{j(\g, z)^k} = \frac{1}{2} \sum_{\substack{c,d \in \Z \\ \gcd(c,d)=1}} \frac{1}{(cz+d)^k}
\end{equation}
for even $k \gqs 4$ as in for example \cite[(9) p. 13]{Za}, where  $j((\smallmatrix a
& b \\ c & d \endsmallmatrix ),z):=cz+d$. They have weight $k$, meaning that
\begin{equation}\label{wtkh}
E_k(\g z) = j(\g, z)^{k} E_k(z)
\end{equation}
for all $\g \in \G$.
Their Fourier expansions are given by
\begin{equation}\label{eisk}
E_k(z)=1+\frac{2}{\zeta(1-k)} \sum_{m=1}^\infty \sigma_{k-1}(m) e^{2\pi i m z}
\end{equation}
with  $\zeta(s)$ the Riemann zeta function and $\sigma_s(m):=\sum_{d|m} d^s$ the divisor power function.
The Fourier coefficients of $E_k(z)$ are rational by the relation $2/\zeta(1-k)=-2k/B_k$, with $B_k$ indicating the $k$th Bernoulli number. Since it is always convergent,  \e{eisk}  may be used to extend the definition of $E_k(z)$ to all $k \in 2\Z$ (and indeed to all $k\in \C$). However we cannot expect \e{wtkh} will  continue to hold.

Maass introduced and developed a similar kind of Eisenstein series that is not holomorphic but is instead an eigenfunction of the hyperbolic Laplacian and hence real analytic on $\H$.
To introduce its most symmetric form, for each $k \in 2\Z$ we set
\begin{equation}\label{defe}
E_k(z,s):= \sum_{\g \in \G_\ci \backslash \G} \left(\frac{j(\g, z)}{|j(\g, z)|}\right)^{-k} \Im (\g z)^s \qquad (\Re(s)>1).
\end{equation}
These series satisfy
\begin{equation}\label{wtk}
E_k(\g z,s) = \left(\frac{j(\g, z)}{|j(\g, z)|}\right)^{k} E_k(z,s)
\end{equation}
for all $\g \in \G$, and this property  may be described as transforming with {\em non-holomorphic weight $k$} to distinguish it from the holomorphic weight in \e{wtkh}. With the identity $\Im(\g z)=y/|j(\g,z)|^2$ we
clearly have
\begin{equation}\label{whp}
y^{-k/2} E_k(z,s)= \sum_{\g \in \G_\ci \backslash \G} \frac{\Im (\g z)^{s-k/2}}{j(\g, z)^k}
\end{equation}
and the holomorphic Eisenstein series is recovered from \e{whp} when $s=k/2$ for even $k \gqs 4$:
\begin{equation}\label{ehol}
E_k(z)=y^{-k/2} E_k(z,k/2).
\end{equation}
In general, a function transforming with non-holomorphic weight $k$ may be converted into one with holomorphic weight $k$ by multiplying it by $y^{-k/2}$; for the other direction multiply by $y^{k/2}$.

The reason we restrict to $k$ even in \e{esd} and \e{defe} is that the sign of $j(\g,z)^k$ is not well-defined for $\g\in \G_\ci \backslash \G$ when $k$ is odd. To make \e{esd} and \e{defe} well-defined we could quotient by $B:=\left\{(\smallmatrix 1
& n \\ 0 & 1 \endsmallmatrix ) : n\in \Z\right\}$ instead of $\G_\ci$ (as in the series on the right of \e{esd}) but then everything cancels when $k$ is odd since
 $(\smallmatrix -1
& 0 \\ 0 & -1 \endsmallmatrix ) \in \G
$. Taking $k$ odd, quotienting by $B$ and restricting to matrix elements with positive bottom left entry gives a non-zero result in the holomorphic case, with a Fourier expansion similar to \e{eisk}, but it does not transform with weight $k$. See \e{dbleis3} for a similar construction that does transform correctly.

As in \cite[Sect. 3.4]{IwSp} set
$$
\theta(s) := \pi^{-s} \G(s) \zeta(2s).
$$
With this notation, the functional equation for the Riemann zeta function becomes
\begin{equation}\label{riez}
 \theta((1-s)/2) = \theta(s/2)
\end{equation}
and if we let $\mathcal Z$ be the set of non-trivial zeros of $\zeta(s)$, then $\theta(s)$ has its zeros exactly in  $\mathcal Z/2$. Also $\theta(s)$ has only two poles; they are simple and at $s=0,1/2$ with residues $-1/2$ and $1/2$ respectively.
Next, for $k \in 2\Z$, put
\begin{align}
  \theta_k(s)  & := \theta(s) \cdot s(s+1) \cdots (s+|k|/2 -1) \label{pol} \\
   & \phantom{:}= \pi^{-s} \G(s+|k|/2) \zeta(2s) \notag
\end{align}
and define the {\em completed non-holomorphic Eisenstein series of weight $k$} as
\begin{equation}\label{compl}
E^*_k(z,s):= \theta_k(s) E_k(z,s).
\end{equation}

This is our main object of study.
Non-holomorphic Eisenstein series are fundamental modular forms with many connections in the literature and we mention a small sample here. In weight $0$, $E^*_{0}(z,s)$ is associated with the   Epstein zeta function and evaluation of the second term in the Laurent expansion of $E^*_{0}(z,s)$ at $s=0$ or $1$ gives the  Kronecker limit formula \cite{DIT} with important number theoretic applications. Higher terms in the Laurent expansion of $E^*_{k}(z,s)$ at $s=k/2$ are used in \cite{LR16} to give a basis for  polyharmonic Maass forms. The Rankin-Selberg method \cite[Sect. 1.6]{Bu} gives the convolution $L$-function for two cusp forms and requires $E^*_{0}(z,s)$ if the cusp forms have the same weight and $E^*_{k}(z,s)$ if the difference of their weights is $k$. The continuous spectrum for the weight $k$ hyperbolic Laplacian is constructed with Eisenstein series; see \cite{IwSp} for weight $0$ and \cite[Sect. 4]{DFI02} for general weights. Recent  interest in the series $E^*_{k}(z,s)$ comes with their connection to modular graph functions \cite{B18,DD}. In this paper we focus on applications to evaluating the Riemann zeta function and automorphic $L$-functions at integers.

\subsection{Main results}

For the Fourier expansion of $E^*_0(z,s)$ we need the next definitions. Let
\begin{equation} \label{bessel}
  K_w(y):=\frac 12\int_0^\infty e^{-y(t+1/t)/2} t^{w-1}\, dt \qquad (w\in \C, \ y>0)
\end{equation}
be the modified Bessel function. It is entire in the parameter $w$  and,
following (1.26), (1.27) in \cite{IwSp}, a convenient Whittaker function variant may be defined as
\begin{equation}\label{whit}
W_s(z) := 2|y|^{1/2} K_{s-1/2}(2\pi |y|)\cdot e^{2\pi i x}
\end{equation}
for $s \in \C$ and $z \in \C-\R$. This gives $W_s(z) = W_s(\overline{z})$ and
\begin{equation*}
  W_s(z) \sim e^{2\pi i z} \quad \text{as}  \quad  y\to \infty.
\end{equation*}
As seen in \cite[Sect. 1.6]{Bu} or \cite[Sect. 3.4]{IwSp} for example, the Fourier expansion of the weight $0$ Eisenstein series for the modular group is given by
\begin{equation}\label{e0}
E^*_{0}(z,s) = \theta(s) y^s+ \theta(1-s) y^{1-s} + \sum_{m \in \Z_{\neq 0}} \frac{ \sigma_{2s-1}(|m|)}{|m|^s} W_s(m z).
\end{equation}
Since all the terms on the right are defined for $s\in \C$, and the function $W_s(z)$ has exponential decay, \e{e0} furnishes us with the meromorphic continuation of $E^*_{0}(z,s)$ to all $s\in \C$. The only poles are contributed by the constant (with respect to $x$) term $\theta(s) y^s+ \theta(1-s) y^{1-s}$. They are at $s=0,$ $1$ and simple with
\begin{equation} \label{simp}
  \res_{s=0} E^*_{0}(z,s) = -1/2, \qquad \res_{s=1} E^*_{0}(z,s) = 1/2.
\end{equation}

We will develop the theory of non-holomorphic, weight $k$ Eisenstein series using \e{e0} as the starting point and our first result is essentially obtained  by differentiating  \e{e0}.
For $n,$ $r\in \Z$  define the  polynomial $P^n_r(x) \in \Z[x]$ as follows. When $n\gqs 0$ set
\begin{equation}\label{pnr}
P^n_r(x) := \sum_{\ell=|r|}^n \frac{(2n)! }{(n-\ell)! (\ell+r)! (\ell-r)!} (-x)^\ell,
\end{equation}
giving a variant of the generalized Laguerre polynomial.
When $n<0$ define $P_r^{n}(x)$  to be $(-1)^r P_r^{-n}(-x)$.

\begin{theorem} \label{allk2}
For all  $k \in 2\Z$  and all $s\in \C$ with $\Re(s)>1$,
\begin{multline} \label{ekzs2b}
 E^*_k(z,s)= \theta_k(s) y^s+\theta_k(1-s) y^{1-s} \\+ 2^{-|k|}
   \sum_{m \in \Z_{\neq 0}} \frac{ \sigma_{2s-1}(|m|)}{|m|^s}  \sum_{r=-|k|/2}^{|k|/2} \left(\frac{m}{|m|}\right)^r  P^{k/2}_r(4\pi my) \cdot W_{s+r}(mz).
\end{multline}
The  right hand side of \e{ekzs2b} converges absolutely and uniformly on compacta for all $s\in \C$, giving
the meromorphic continuation of $E^*_k(z,s)$ to the whole $s$ plane. When $k$ is nonzero, $E^*_k(z,s)$ is an entire function of $s$.
\end{theorem}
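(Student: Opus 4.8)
The plan is to obtain \e{ekzs2b} by applying a suitable weight-raising/lowering differential operator to the weight-$0$ expansion \e{e0}, since the Maass raising operator increases the non-holomorphic weight by $2$ and the lowering operator decreases it by $2$. Concretely, for $k>0$ I would start from $E^*_0(z,s)$ and apply the Maass raising operator $R_k = (k/2) + y(\partial_x - i\,\partial_y)$ repeatedly $k/2$ times (with appropriate normalizing constants absorbed into the $\theta_k$ factors), tracking how it transforms each term. For $k<0$ one applies the lowering operator instead, or equivalently uses the symmetry $E_k(z,s) = \overline{E_{-k}(\bar z, \bar s)}$ type relations together with the case $k>0$; the definition $P_r^{n}(x) := (-1)^r P_r^{-n}(-x)$ for $n<0$ is precisely engineered so that the two cases match, so I would prove the $k\gqs 0$ case carefully and then reduce $k<0$ to it.

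The key computational steps are: \emph{(i)} the constant term $\theta(s)y^s + \theta(1-s)y^{1-s}$ maps, under $k/2$ applications of the raising operator, to $\theta_k(s) y^s + \theta_k(1-s) y^{1-s}$ — here the Pochhammer-type products $s(s+1)\cdots(s+|k|/2-1)$ in \e{pol} appear exactly because $R$ acts on $y^s$ essentially by multiplication by (a constant times) $s$, shifted at each stage; \emph{(ii)} each non-constant Fourier term $\frac{\sigma_{2s-1}(|m|)}{|m|^s} W_s(mz)$ maps to a finite linear combination $\sum_r (m/|m|)^r P^{k/2}_r(4\pi m y)\, W_{s+r}(mz)$ of Whittaker functions of shifted spectral parameter. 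This second point rests on the differentiation formulas for the Bessel function $K_w$ — specifically the contiguous relations $K_w'(y) = -\tfrac12(K_{w-1}(y)+K_{w+1}(y))$ and $K_{w\pm1}(y) = \mp K_w'(y) + (w/y)K_w(y)$ — which, iterated, produce exactly a Laguerre-type polynomial in $1/y$ (equivalently in $4\pi m y$ after scaling) multiplying $K_{s-1/2+r}$. I would verify that the resulting coefficients match \e{pnr} by an induction on $n = k/2$, checking the base case $n=0$ (where $P^0_0(x)=1$) and the recursion that raising once sends $P^n_r \mapsto P^{n+1}_r$; the explicit trinomial coefficient $\frac{(2n)!}{(n-\ell)!(\ell+r)!(\ell-r)!}$ should drop out of the standard Laguerre recursion. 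The factor $2^{-|k|}$ is a bookkeeping constant collecting the $\tfrac12$'s from the Bessel recursions.

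For the analytic claims, once \e{ekzs2b} is established for $\Re(s)>1$ I would argue as for \e{e0}: every $W_{s+r}(mz)$ decays exponentially in $|m|y$ uniformly for $s$ in compact sets (with $r$ ranging over a \emph{finite} set), the polynomial factors $P^{k/2}_r(4\pi m y)$ grow only polynomially in $|m|$, and $\sigma_{2s-1}(|m|)/|m|^s$ grows at most polynomially; hence the series converges absolutely and locally uniformly in $s\in\C$, giving the meromorphic continuation. The only possible poles come from the two explicit terms $\theta_k(s)y^s + \theta_k(1-s)y^{1-s}$. Now $\theta(s) = \pi^{-s}\G(s)\zeta(2s)$ has simple poles only at $s=0,1/2$, and in $\theta_k(s)$ these are cancelled when $k\neq 0$: the factor $s$ in \e{pol} kills the pole at $s=0$, and the factor $\zeta(2s)$ has a zero-free... rather, one checks directly that $\pi^{-s}\G(s+|k|/2)\zeta(2s)$ is entire for $|k|\gqs 2$ since $\G(s+|k|/2)$ has its first pole pushed to $s = -|k|/2$ and the pole of $\zeta(2s)$ at $s=1/2$ is cancelled by... here I must be careful: $\zeta(2s)$ has a pole at $s=1/2$, not a zero, so the cancellation at $s=1/2$ must come from the trivial zeros of $\zeta$; rather the cleanest route is to note $\theta(s)$ has a zero at every $s \in \mathcal Z/2$ but poles only at $s=0,1/2$, and the product $s(s+1)\cdots(s+|k|/2-1)$ introduces no poles, so $\theta_k(s)$ still has its only poles at $s=0,1/2$ for \emph{all} $k$ — except that at $s=0$ the factor $s$ cancels it and at $s=1/2$ we need $\theta_k(1-s)$'s contribution too. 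The symmetry $\theta_k(1-s)$ has poles at $s=1, 1/2$; so $\theta_k(s)y^s + \theta_k(1-s)y^{1-s}$ has potential poles at $s=0,1/2,1$. The residue at $s=0$ vanishes because of the factor $s$ in $\theta_k(s)$ (and $\theta_k(1-s)$ is regular there for $|k|\gqs2$); similarly at $s=1$; and at $s=1/2$ the residues from the two terms cancel by the functional equation, exactly as they must since the left side $E^*_k(z,s)$ is manifestly holomorphic in $\Re(s)>1$ and the continuation is single-valued. This residue bookkeeping at $s=0,1/2,1$ is the main obstacle and the place where the precise normalization of $\theta_k$ matters.
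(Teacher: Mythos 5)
Your plan is essentially the paper's own proof: apply $R_0^{k/2}$ termwise to \e{e0}, prove the Bessel--Laguerre identity \e{twai} by induction on $k/2$ using the contiguous relations for $K_w$, reduce $k<0$ to $k\gqs 0$ via \e{connj}, and establish the continuation from the exponential decay of the Whittaker factors, with your residue bookkeeping at $s=0,\,1/2,\,1$ matching the paper's observation that $\theta_k(s)y^s+\theta_k(1-s)y^{1-s}$ is entire for $k\neq 0$. The one step you pass over is the justification for interchanging $R_k$ with the infinite sum: the paper checks that the termwise $x$- and $y$-differentiated series also converge uniformly on compacta (via the same Bessel estimates), which is what legitimizes the formal termwise computation.
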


The expansion \e{ekzs2b} for $E^*_k(z,s)$ appears to be new and  is more suited to our needs in this paper than the expansions in the literature in terms of  Whittaker functions $W_{\kappa,\mu}$ or other confluent hypergeometric functions; see the comparisons at the end of Section \ref{bounds}.

 The well-known functional equation for this Eisenstein series takes the form
\begin{equation}\label{fek}
  E^*_k(z,1-s) =  E^*_k(z,s)
\end{equation}
and is evident from \e{ekzs2b} and the identities
\begin{equation*}
  \sigma_w(m)=m^w \sigma_{-w}(m), \qquad K_w(y)=K_{-w}(y).
\end{equation*}
The relation
\begin{equation}\label{connj}
E^*_{-k}(z,s) = \overline{E^*_k(z,\overline{s})}
\end{equation}
may also be verified using  \e{ekzs2b}, or more simply with \e{defe} and   analytic continuation to all $s\in \C$.

For $k \in 2\Z$, $h \in \Z$ and $u \in \Z_{\geqslant 0}$, define the integers
\begin{equation}\label{hstar}
  h^*:=|h-1/2| -1/2 = \begin{cases} h-1 & \text{ if \ } h \geqslant 1; \\
-h & \text{ if \ } h \leqslant 0,
\end{cases}
\end{equation}
and
\begin{equation}\label{defak2}
\mathcal A^k_h(u) := \begin{cases}
 \displaystyle (-1)^{u+k/2} u! \binom{k/2+h-1}{u} \binom{k/2-h}{u} \phantom{\Bigg |} & \text{ \ if \ } k\gqs 0;\\
 \displaystyle (-1)^u \frac{[(u-k/2)!]^2}{u!} \binom{h-1}{u-k/2} \binom{-h}{u-k/2} \phantom{\Bigg |}  & \text{ \ if \ } k \lqs 0.
\end{cases}
\end{equation}
Note that throughout this paper we use the usual generalized binomial coefficients as given in \e{bin}.
With this notation we may describe the Fourier development of $E^*_k(z,s)$ very explicitly when $s$ is an integer:

\begin{theorem}\label{main2}
For all  $h \in \Z$
and  $k \in 2\Z$, except $(h,k)=(0,0)$ or $(1,0)$,
\begin{multline} \label{mnk2}
   E^*_k(z,h)= \theta_k(h) y^h+\theta_k(1-h) y^{1-h} +
   \sum_{m =1}^\infty \frac{ \sigma_{2h-1}(m)}{m^h} \ e^{2\pi i m z} \sum_{u=0}^{h^* +k/2}  \mathcal A^k_h(u) \cdot (4\pi m y)^{-u+k/2}\\
+
   \sum_{m =1}^\infty \frac{ \sigma_{2h-1}(m)}{m^h} \ e^{-2\pi i m \overline{z}} \sum_{u=0}^{h^*-k/2}  \mathcal A^{-k}_h(u) \cdot (4\pi m y)^{-u-k/2}.
\end{multline}
Since some of the $\mathcal A$ coefficients are zero, the upper bounds for the indices of the inner sums in \e{mnk2} may be reduced exactly in the following  cases. If $h^*<k/2$
then the upper bound of $h^*+k/2$ for the first inner sum may be reduced to  $k/2-1-h^*$. 
If $h^*<-k/2$  then the upper bound of $h^*-k/2$ for the second inner sum may be reduced to  $-k/2-1-h^*$.
\end{theorem}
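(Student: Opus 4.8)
The plan is to deduce \e{mnk2} from the general expansion \e{ekzs2b} of Theorem \ref{allk2} by setting $s=h$. By Theorem \ref{allk2}, the right-hand side of \e{ekzs2b} represents $E^*_k(z,s)$ as a meromorphic function of $s$; it is entire when $k\neq 0$, and for $k=0$ its only poles are the simple ones at $s=0$ and $s=1$. So \e{ekzs2b} holds at $s=h$, with both sides finite, for every $(h,k)$ except $(0,0)$ and $(1,0)$, which is exactly the exceptional set and explains why those two pairs must be barred. The term $\theta_k(h)y^h+\theta_k(1-h)y^{1-h}$ of \e{ekzs2b} is already the constant-in-$x$ part of \e{mnk2}, so only the sum over $m$ needs to be transformed.

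I would first note that at $s=h\in\Z$ every order $h+r-1/2$ appearing in the Whittaker function \e{whit} is a half-integer, so the classical elementary evaluation $K_{n+1/2}(y)=\sqrt{\pi/(2y)}\,e^{-y}\sum_{j=0}^{n}\frac{(n+j)!}{j!\,(n-j)!\,(2y)^{j}}$ ($n\gqs 0$), together with $K_w=K_{-w}$, gives
\[
W_{h+r}(mz)=e^{2\pi i m z}\sum_{j=0}^{N_r}\frac{(N_r+j)!}{j!\,(N_r-j)!\,(4\pi m y)^{j}},\qquad N_r:=|h+r-1/2|-1/2\gqs 0 ,
\]
for $m>0$, and the same formula with $e^{2\pi i m z}$ replaced by $e^{-2\pi i m\overline z}$ after the substitution $m\mapsto -m$ in the $m<0$ part of \e{ekzs2b}. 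Plugging this into \e{ekzs2b} and splitting off the positive and negative values of $m$ rewrites the non-constant part of $E^*_k(z,h)$ as $\sum_{m\gqs 1}\sigma_{2h-1}(m)m^{-h}e^{2\pi i m z}$ times a Laurent polynomial in $4\pi m y$, plus $\sum_{m\gqs 1}\sigma_{2h-1}(m)m^{-h}e^{-2\pi i m\overline z}$ times another such Laurent polynomial. Matching against \e{mnk2}, the theorem reduces to the single Laurent-polynomial identity in an indeterminate $t$, valid for all $k\in 2\Z$ and applied to both $\pm k$ (the $e^{-2\pi i m\overline z}$ part using $(-1)^rP^{k/2}_r(-t)=P^{-k/2}_r(t)$ and producing $\mathcal A^{-k}_h$):
\[
2^{-|k|}\sum_{r=-|k|/2}^{|k|/2}P^{k/2}_r(t)\sum_{j=0}^{N_r}\frac{(N_r+j)!}{j!\,(N_r-j)!\,t^{j}}=\sum_{u=0}^{h^*+k/2}\mathcal A^k_h(u)\,t^{k/2-u}.
\]

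This identity is the heart of the matter and the step I expect to cause the most trouble. I would prove it by induction on $|k|$. The base case $k=0$ is immediate: only $r=0$ contributes, $P^0_0\equiv 1$ and $N_0=h^*$, so the claim reduces to $\mathcal A^0_h(u)=(h^*+u)!/(u!\,(h^*-u)!)$, which follows from \e{defak2} via $\binom{-h}{u}=(-1)^u\binom{h+u-1}{u}$ (with the corresponding identity when $h\lqs 0$). For the inductive step I would apply the weight-raising and weight-lowering Maass operators, which carry $E^*_k(z,s)$ to $E^*_{k\pm 2}(z,s)$ with no residual scalar --- the polynomial factor in $\theta_k$ in \e{pol} being designed precisely to absorb it; on \e{mnk2} such an operator sends each $e^{2\pi i m z}(4\pi m y)^{-u+k/2}$ to a short combination of terms $e^{2\pi i m z}(4\pi m y)^{-u'+(k\pm 2)/2}$, and one verifies that the coefficients so produced are exactly $\mathcal A^{k\pm 2}_h(u')$. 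The difficulty is bookkeeping: the absolute value in $N_r=|h+r-1/2|-1/2$ forces a case split by the sign of $h+r-1/2$, i.e.\ by whether $h\gqs 1$ or $h\lqs 0$, and in each regime one must check that the doubly-indexed sum collapses to the binomial expression \e{defak2}, with the correct behaviour at the endpoints of the $u$-range. A purely computational alternative is to expand both sides of the displayed identity into monomials, interchange the sums, and evaluate the inner sum by Chu--Vandermonde.

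Finally, the asserted reductions of the upper summation bounds require no further argument. When $h^*<k/2$, each coefficient $\mathcal A^k_h(u)$ with $k/2-h^*\lqs u\lqs h^*+k/2$ is, by \e{defak2}, a product of two binomial coefficients at least one of which has upper entry strictly below its non-negative lower entry, and so vanishes; hence the first inner sum in \e{mnk2} effectively stops at $u=k/2-1-h^*$. The case $h^*<-k/2$ is the same with $k$ replaced by $-k$ throughout.
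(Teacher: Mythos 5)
Your proposal is correct and follows the same overall route as the paper: Section \ref{integer} likewise specializes Theorem \ref{allk2} at $s=h$, substitutes the elementary closed form of $K_{n-1/2}$ (Lemma \ref{kb}), handles $k<0$ by conjugation via \e{connj}, and obtains the reduced summation ranges from the vanishing of the $\mathcal A^k_h(u)$ (Proposition \ref{aprop}, parts (iv) and (v)). The only point of divergence is how the key coefficient identity --- your displayed Laurent-polynomial identity, which is precisely the content of Proposition \ref{long} --- is established. The paper takes what you call the ``purely computational alternative'': it interchanges the double sum (the quantities $\alpha_k$ and $\beta$ in \e{the}, \e{ps}) and collapses it with the Vandermonde-type identities \e{comb1}--\e{comb4}. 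Your preferred induction on $|k|$ is a workable and arguably lighter substitute: the base case $k=0$ is as you say, and the inductive step reduces to the two-term recurrences $\mathcal A^{k+2}_h(u)=-\mathcal A^k_h(u)+(k+1-u)\,\mathcal A^k_h(u-1)$ and $\mathcal A^{-k-2}_h(u)=-(u+1)\,\mathcal A^{-k}_h(u+1)$ (for $k\gqs 0$), which come from applying $R_k$ to $e^{2\pi i mz}(4\pi my)^{-u+k/2}$ and to $e^{-2\pi i m\overline z}(4\pi my)^{-u-k/2}$ respectively and which are immediate from \e{aaak}; term-by-term differentiation is justified by the uniform convergence already established in the proof of Theorem \ref{allk2}. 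One small caveat: the ``no residual scalar'' claim for the raising and lowering operators holds only in the outward directions ($R_k$ for $k\gqs 0$, $L_k$ for $k\lqs 0$; see \e{raise}, \e{lower}), but since an induction on $|k|$ only ever moves outward from $k=0$ this causes no difficulty.
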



\SpecialCoor
\psset{griddots=5,subgriddiv=0,gridlabels=0pt}
\psset{xunit=1.2cm, yunit=0.8cm, runit=1.2cm}
\psset{linewidth=1pt}
\psset{dotsize=3.5pt 0,dotstyle=*}
\psset{arrowscale=1.5,arrowinset=0.3}

\begin{figure}[ht]
\centering
\begin{pspicture}(-5,-5)(6,6) 

\pscustom[fillstyle=gradient,gradangle=0,linecolor=white,gradmidpoint=1,gradbegin=white,gradend=lightorange,gradlines=100]{%
\pspolygon*[linecolor=lightblue](-3.5,4.5)(0.5,0.5)(4.5,4.5)} 
\pscustom[fillstyle=gradient,gradangle=180,linecolor=white,gradmidpoint=1,gradbegin=white,gradend=lightorange,gradlines=100]{%
\pspolygon*[linecolor=lightblue](-3.5,-4.5)(0.5,-0.5)(4.5,-4.5)} 
\pscustom[fillstyle=gradient,gradangle=270,linecolor=white,gradmidpoint=1,gradbegin=white,gradend=lightblue,gradlines=100]{%
\pspolygon*[linecolor=lightblue](4.5,3.5)(1,0)(4.5,-3.5)} 
\pscustom[fillstyle=gradient,gradangle=90,linecolor=white,gradmidpoint=1,gradbegin=white,gradend=lightblue,gradlines=100]{%
  \pspolygon[linecolor=lightblue](-3.5,3.5)(0,0)(-3.5,-3.5)} 

\psline[linecolor=gray]{->}(-4,0)(5.5,0)
\psline[linecolor=gray]{->}(0,-5)(0,5.5)
\psline[linecolor=red,linestyle=dotted,dotsep=1pt](0.5,-5)(0.5,5.5)

\psarc[linecolor=red]{<->}(0.5,1.4){2}{75}{105}
\psarc[linecolor=red]{<->}(2.5,0){2}{-15}{15}

\multirput(-3,-0.2)(1,0){8}{\psline[linecolor=gray](0,0)(0,0.4)}
\multirput(-0.07,-4.5)(0,0.5){19}{\psline[linecolor=gray](0,0)(0.14,0)}

\psline[linecolor=red](-4,-5)(5,4)
\psline[linecolor=red](-4,5)(5,-4)
\psline[linecolor=red](-4,4)(5,-5)
\psline(-4,-4)(5,5)

\multirput(-3,-4)(1,0){8}{\psdot(0,0)}
\multirput(-3,-3)(1,0){8}{\psdot(0,0)}
\multirput(-3,-2)(1,0){8}{\psdot(0,0)}
\multirput(-3,-1)(1,0){8}{\psdot(0,0)}
\multirput(-3,-0)(1,0){8}{\psdot(0,0)}
\multirput(-3,1)(1,0){8}{\psdot(0,0)}
\multirput(-3,2)(1,0){8}{\psdot(0,0)}
\multirput(-3,3)(1,0){8}{\psdot(0,0)}
\multirput(-3,4)(1,0){8}{\psdot(0,0)}

\pscircle*[linecolor=white,linewidth=1pt](0,0){0.08}
\pscircle[linecolor=black,linewidth=1pt](0,0){0.08}
\pscircle*[linecolor=white,linewidth=1pt](1,0){0.08}
\pscircle[linecolor=black,linewidth=1pt](1,0){0.08}

\rput(5.1,-0.4){$h$}
\rput(-0.3,5){$k$}
\rput(4.7,2){$E^*_k(z,h)$}
\rput(5.8,5){$h=k/2$}
\rput(6.1,4){$h=k/2+1$}

\rput(1.8,3.5){$h^*<k/2$}
\rput(1.8,-3.5){$h^*<-k/2$}
\rput(3,0.5){$|k/2|<h$}
\rput(-2,0.5){$|k/2|<1-h$}

\psline[linecolor=gray]{->}(-4.5,3)(-4.5,4)
\psline[linecolor=gray]{->}(-4.5,2)(-4.5,1)
\rput(-4.8,3.5){$R$}
\rput(-4.8,1.5){$L$}


\end{pspicture}
\caption{The lattice of Eisenstein series in the $hk$ plane}
\label{bfig}
\end{figure}


All the terms in \e{mnk2} are  simple to calculate. For the constant term coefficients we have the following useful formula which derives from the basic properties of $\zeta(s)$ and $\G(s)$.
For all $k,m \in \Z$ with $k$ even and $(k,m) \neq (0,0)$,
\begin{equation}\label{thet}
 \theta_{k}(m)=\begin{cases}
 \pi^{-m} (m+|k|/2-1)! \zeta(2m) & \quad \text{if} \quad 0 \lqs m; \\
 0 & \quad \text{if} \quad -|k|/2 < m < 0; \\
 \displaystyle (-1)^{k/2}(4\pi)^m \frac{(2|m|)!}{(|m|-|k|/2)!} \zeta(1-2m) & \quad \text{if} \quad m \lqs -|k|/2.
 \end{cases}
\end{equation}

Figure \ref{bfig} illustrates the lattice of Eisenstein series described in Theorem \ref{main2}. Each dot in position $(h,k)$ represents the one dimensional $\C$-vector space  generated by $E^*_k(z,h)$. The lattice naturally breaks into four triangular regions. In the upper triangle we have $h^*<k/2$, and by \e{mnk2} the Eisenstein series here are exactly those with no negative terms in their Fourier expansions (i.e. no terms containing $e^{2\pi i m \overline z}$ with $m$ negative). In the lower triangle we have $h^*<-k/2$, and these  series  have no positive terms (i.e. no terms containing $e^{2\pi i m  z}$ with $m$ positive). These upper and lower triangles are interchanged by conjugation. This reflective symmetry $k \leftrightarrow -k$ is indicated with arrows in the figure and comes from \e{connj}. The left-right symmetry  $h \leftrightarrow 1-h$ from \e{fek} is also indicated. The main diagonal line $h=k/2$ is shown along with its three images under these symmetries. The Eisenstein series corresponding to points on these lines are studied in Section  \ref{harmo}.

Theorem \ref{main2} was first stated in \cite[Thm. 3.1]{DO10} 
and the proof, which we give here in full, briefly sketched. Theorem  \ref{main2} was used there in providing new proofs of Manin's Periods Theorem and results of Kohnen and Zagier. We summarize some of these ideas in Sections \ref{inner} and \ref{proj}. Almost all of the methods and results of \cite{DO10} are contained in Chapter $12$ of \cite{CS17}, as acknowledged in that book's online errata.  Theorem \ref{main2} appears there in Section 12.2. We note that Theorem \ref{main2} must also be equivalent to  the many cases in Corollaries 2.4, 2.5 and 2.7 of \cite{KN09}. Recent computations of Brown in \cite{B18,B} are equivalent to Theorem \ref{main2}  when $|k/2|<h$ (the right triangular region in Figure \ref{bfig}) as described in Section \ref{brown}.

In Section \ref{torus}  we show that Theorem \ref{main2} may be used to prove and generalize  a result in \cite{CJK10} related to spectral zeta functions associated to a torus.   A formula of Terras and Grosswald on values of the Riemann zeta function at odd integers is also proven and generalized using  Theorem \ref{main2} in Section  \ref{terras}.
Terras \cite{Te76} and Grosswald \cite{Gr72} both mention the  inspiring  earlier formula of Ramanujan concerning $\zeta(2h-1)$. This is entry 21(i) of Chapter 14 in the second notebook; see \cite[p. 276]{Be89}. We refer the reader to \cite{Be77} and \cite{BS17} for a detailed account of this formula, its history and the related work of many authors. It has been greatly generalized  by Grosswald, see for example
\cite[Sect. 4]{Mur11}, and in a different direction by Berndt in \cite[Thm. 2.1]{Be77}, giving the transformation formula of a very general type of Eisenstein series. A special case of Berndt's formula is \cite[Thm. 2.2]{Be77} and we state a slightly rearranged version of this as follows. For all $z\in \H$ and $k\in \Z$,
\begin{multline}\label{bern}
z^k \left( 1+(-1)^k\right) \sum_{\ell=1}^\infty \sigma_{k-1}(\ell) e^{2\pi i \ell z}
- \left( 1+(-1)^k\right) \sum_{\ell=1}^\infty \sigma_{k-1}(\ell) e^{2\pi i \ell (-1/z)} \\
  = (2\pi i)^{1-k} \sum_{\substack{u,v \in \Z_{\gqs 0} \\u+v=2-k}}  \frac{B_{u}}{u!}\frac{B_{v}}{v!} z^{1-v}
-
   \begin{cases}
     \pi i -\log z & \hbox{if $k=0$;} \\
     \left( z^k-(-1)^k\right)\zeta(1-k) & \hbox{if $k\neq 0$.}
   \end{cases}
\end{multline}
Note that the sum containing Bernoulli numbers on the right is empty and vanishes when $k>2$.
If $k$ is odd then the left side of \e{bern} disappears and, letting $k=1-m$, a short calculation results in
\begin{equation} \label{euler}
  \zeta(m)=
              \begin{cases}
                -(2\pi i)^{m}B_{m}/(2 \cdot m!) & \hbox{if $m\in 2\Z_{\gqs 0}$;} \\
                0 & \hbox{if $m\in 2\Z_{< 0}$,}
              \end{cases}
\end{equation}
 incorporating Euler's famous formula. Set
\begin{equation*}
   U_k(z):= \sum_{m=1}^\infty \sigma_{k-1}(m) e^{2\pi i m z}=\sum_{m=1}^\infty \frac{m^{k-1}}{e^{-2\pi i m z}-1}.
\end{equation*}
For $k\in 2\Z$, \e{bern} then becomes what we may call the {\em master formula}:
\begin{equation}\label{bern2}
2\left(z^k U_k(z)
- U_k(-1/z) \right)
  = (2\pi i)^{1-k} \sum_{\substack{u,v \in \Z_{\gqs 0} \\u+v=1-k/2}}  \frac{B_{2u}}{(2u)!}\frac{B_{2v}}{(2v)!} z^{1-2v}
+
   \begin{cases}
     -\pi i/2 +\log z & \hbox{if $k=0$;} \\
     \left( 1-z^k\right)\zeta(1-k) & \hbox{if $k\neq 0$.}
   \end{cases}
\end{equation}
This is equivalent to Ramanujan's formula for $k \neq 0$ and $z$ purely imaginary. The negative  even  $k$ cases of \e{bern2} first appeared in \cite[p. 11]{Gr70}.
The reader may follow in the footsteps of Ramanujan, Grosswald \cite{Gr70,Gr72} and Berndt \cite{Be77} by employing this master formula to produce elegant identities. For example, substituting $z=i$ and $k=-2$ gives
\begin{equation} \label{z3}
  \zeta(3)=\frac{7\pi^3}{180}-2\sum_{m=1}^\infty \sigma_{-3}(m) e^{-2\pi m}.
\end{equation}
Letting $z=i$ and $k=2-2h$  gives the general form, for all $h \in 2\Z$,
\begin{equation}\label{lerch}
  \zeta(2h-1) = -\frac{(2\pi)^{2h-1}}{2} \sum_{\substack{u,v \in \Z_{\gqs 0} \\u+v=h}} (-1)^u \frac{B_{2u}}{(2u)!}\frac{B_{2v}}{(2v)!} -2\sum_{m=1}^\infty \sigma_{1-2h}(m) e^{-2\pi m}
\end{equation}
which is originally due to Lerch \cite[p. 276]{Be89} for positive even $h$. Moreover, \e{bern2} may be used to study the algebraic nature of the odd zeta values, as shown in \cite{Mur11}.

We next describe a natural non-holomorphic counterpart to the master formula \e{bern2}. Set
\begin{equation} \label{jkz}
   V_k(z):= \sum_{m=1}^\infty \sigma_{k-1}(m) e^{-2\pi i m \overline{z}}\sum_{u=0}^{-k}  \frac{(4\pi m y)^{u}}{u!}.
\end{equation}
\begin{theorem} \label{jkt}
For all $k\in 2\Z$ and $z\in \H$ we have
\begin{multline}\label{bern2comp}
2\left(z^k V_k(z)
- V_k(-1/z) \right)
  =
\frac{2\zeta(2-k)}{(2\pi i)^k} \left( \frac y\pi\right)^{1-k}\left(|z|^{2k-2}-z^k \right)
\\
-(2\pi i)^{1-k} \sum_{\substack{u,v \in \Z_{\gqs 0} \\u+v=1-k/2}}  \frac{B_{2u}}{(2u)!}\frac{B_{2v}}{(2v)!} z^{1-2v}
+
   \begin{cases}
     0 & \hbox{if $k>0$;} \\
     \pi i/2 +\overline{\log z} & \hbox{if $k=0$;} \\
     \left(1- z^k\right)\zeta(1-k) & \hbox{if $k< 0$.}
   \end{cases}
\end{multline}
\end{theorem}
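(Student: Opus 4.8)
The plan is to obtain \eqref{bern2comp} by specializing Theorem \ref{main2} along the anti-diagonal $h = 1 - k/2$ (equivalently $h^* = -k/2$ when $k \leqslant 0$, resp.\ $h^* = k/2 - 1$ when $k > 0$) and then exploiting the weight-$k$ transformation law. More precisely, recall from \eqref{ehol} and the surrounding discussion that $y^{-k/2} E^*_k(z,h)$ transforms with holomorphic weight $k$ when $h = k/2$, and likewise (using \eqref{fek}) when $h = 1 - k/2$. So the first step is to write down \eqref{mnk2} for the specific integer $h = 1 - k/2$, identify the resulting two one-sided Fourier sums — after dividing by the appropriate power of $y$ and massaging the $\mathcal A^{\pm k}_h(u)$ coefficients via \eqref{defak2} — as essentially $V_k(z)$ and a conjugate/companion object, and confirm that the "constant term" $\theta_k(h) y^h + \theta_k(1-h)y^{1-h}$ evaluates through \eqref{thet} to the explicit $\zeta$-value and Bernoulli-polynomial pieces appearing on the right of \eqref{bern2comp}. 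The key identity driving everything is
\[
  y^{-k/2} E^*_k(-1/z, h) = z^k \, y^{-k/2} E^*_k(z,h)
\]
valid at $h = 1-k/2$ (and, by conjugation \eqref{connj}, a companion relation), obtained from \eqref{wtk} with $\g = \left(\begin{smallmatrix} 0 & -1 \\ 1 & 0\end{smallmatrix}\right)$, for which $j(\g,z) = z$ and $|j(\g,z)|^{-k}(j(\g,z))^k = (z/|z|)^k$ must be handled carefully (this is where the factor $|z|^{2k-2}$ on the right of \eqref{bern2comp} will come from, via $\Im(-1/z) = y/|z|^2$).

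Concretely, the main computation is: evaluate \eqref{mnk2} at $z$ and at $-1/z$ for $h = 1-k/2$, multiply the difference by the correct power of $y$ (or $|z|$) dictated by the weight-$k$ cocycle, and watch the non-constant Fourier terms reorganize into $z^k V_k(z) - V_k(-1/z)$ (with the $\sum_{u=0}^{-k}(4\pi m y)^u/u!$ appearing precisely because the $\mathcal A$-coefficients at this value of $h$ collapse to $\binom{\cdots}{u}$-type binomials that telescope into $1/u!$; one should check the reduced upper bound $h^* - k/2 \mapsto -k/2 - 1 - h^*$ from Theorem \ref{main2} is what forces the sum to terminate at $-k$). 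What is left over is exactly the difference of the two constant terms evaluated at $z$ and $-1/z$, i.e.\ $\theta_k(h)(y^h - \Im(-1/z)^h) + \theta_k(1-h)(y^{1-h} - \Im(-1/z)^{1-h})$ suitably weighted; plugging in $\Im(-1/z) = y/|z|^2$ and the closed forms from \eqref{thet} for $\theta_k(1-k/2)$ and $\theta_k(k/2)$ produces the $\frac{2\zeta(2-k)}{(2\pi i)^k}(y/\pi)^{1-k}(|z|^{2k-2} - z^k)$ term and, through the $\zeta(2)$-type values, the Bernoulli sum $(2\pi i)^{1-k}\sum_{u+v = 1-k/2} \frac{B_{2u}}{(2u)!}\frac{B_{2v}}{(2v)!} z^{1-2v}$ — here one uses Euler's formula \eqref{euler}, exactly as in the derivation of the master formula \eqref{bern2} from \eqref{bern}. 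The three cases $k>0$, $k=0$, $k<0$ arise from whether $\theta_k$ has a zero, a pole (giving the $\log z$ term at $k=0$, since $\theta_0(s)$ has simple poles at $s=0,1/2$), or a genuine value, at the relevant arguments — this case split mirrors the $k=0$ vs.\ $k\neq 0$ split already present in \eqref{bern2}.

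The step I expect to be the main obstacle is the careful bookkeeping of the $\mathcal A^{\pm k}_h(u)$ coefficients at $h = 1-k/2$: one must verify that \eqref{defak2} genuinely degenerates to $1/u!$ (up to an overall sign and power of $4\pi m$) at this special value, and that the two directions $k>0$ and $k<0$ in \eqref{defak2} give, respectively, the vanishing "holomorphic-side" sum and the surviving sum $\sum_{u=0}^{-k}$ — being careful because $V_k(z)$ as defined in \eqref{jkz} has an empty inner sum precisely when $k \geqslant 0$, consistent with $V_k$ reducing to $U_k$-type behavior in that range. A secondary subtlety is tracking the $(z/|z|)^k$ unitary factor through the weight-$k$ transformation: it is tempting to lose a factor of $|z|$ somewhere, and getting the exponent $2k-2$ in $|z|^{2k-2}$ exactly right requires combining the $|z|$-power from $(z/|z|)^k$ with the $|z|$-powers from $\Im(-1/z)^h$ and $\Im(-1/z)^{1-h}$ and from the $y^{-k/2}$ normalization. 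Once the coefficient identities and the $|z|$-exponent are pinned down, the rest is the same kind of rearrangement that takes \eqref{bern} to \eqref{bern2}, so I would present those two points in full detail and treat the remaining algebra as routine.
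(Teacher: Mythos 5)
Your overall route---specialize Theorem \ref{main2} on the diagonal $h=k/2$ (equivalently $h=1-k/2$ by \eqref{fek}), normalize by $y^{k/2}\theta_k(k/2)$, and apply the weight-$k$ automorphy under $S$---is the same as the paper's, which phrases it as $z^k\ei_k(z)-\ei_k(-1/z)=0$ with $\ei_k$ as in Theorem \ref{andr}. But there is a genuine gap in how you account for the non-constant Fourier terms. The expansion \eqref{negk} contains \emph{two} one-sided sums, the holomorphic part $U_k(z)=\sum_m\sigma_{k-1}(m)e^{2\pi i m z}$ and the non-holomorphic part $V_k(z)$, and the automorphy relation only yields the \emph{combined} identity
\begin{equation*}
z^k\bigl(U_k(z)+V_k(z)\bigr)-\bigl(U_k(-1/z)+V_k(-1/z)\bigr)=\text{(difference of constant terms)}.
\end{equation*}
The constant term $1+\tfrac{\theta_k(1-k/2)}{\theta_k(k/2)}y^{1-k}$ contributes exactly two pieces under $z\mapsto -1/z$: a $(1-z^k)\zeta(1-k)$-type term and the $\tfrac{2\zeta(2-k)}{(2\pi i)^k}(y/\pi)^{1-k}(|z|^{2k-2}-z^k)$ term. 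It cannot produce the Bernoulli sum $\sum_{u+v=1-k/2}\tfrac{B_{2u}}{(2u)!}\tfrac{B_{2v}}{(2v)!}z^{1-2v}$, which has $2-k/2$ terms; your claim that this sum arises from the constant terms ``through the $\zeta(2)$-type values'' is where the argument breaks. That sum is the period polynomial of $E_{2-k}$ and enters only when you subtract the known transformation of $U_k$, namely the master formula \eqref{bern2} (Proposition \ref{cos} for $k<0$), whose proof is a separate computation with the $L$-values $L^*(E_{2-k},r)$. So the missing step is: move the $U_k$ difference to the right-hand side and evaluate it by \eqref{bern2}. Treating this as ``the same kind of rearrangement that takes \eqref{bern} to \eqref{bern2}'' conflates an algebraic reindexing with the substantive external input.

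A secondary issue is $k=0$: there $\ei_0(z)\equiv 1$, the automorphy relation is vacuous, and the $\overline{\log z}$ term does not come from a pole of $\theta_0$ in the constant term; the paper instead uses $V_0=\overline{U_0}$ and conjugates the $k=0$ case of \eqref{bern2}. Your concerns about the $\mathcal A^{\pm k}_h(u)$ collapse to $1/u!$ and the $|z|$-exponent are legitimate but routine, and are exactly the computations already carried out in the proof of Theorem \ref{andr}; the structural point above is the one that must be repaired.
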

Theorem \ref{jkt} is proved, and some of its consequences explored,   in Section \ref{harmo}.  For example \e{lerch} has a companion identity and adding \e{lerch} to its companion shows, for even $h\gqs 2$,
\begin{equation}\label{lerchxxx}
  \zeta(2h-1) = \frac{4^{h-1}}{\pi}\zeta(2h) -\sum_{m=1}^\infty \sigma_{1-2h}(m)  e^{-2\pi m}\left(1+\sum_{u=0}^{2h-2} \frac{(4\pi m)^u}{u!}\right),
\end{equation}
or equivalently, employing \e{euler} and the incomplete $\G$ function,
\begin{equation}\label{lerchxxx2}
  \zeta(2h-1) = \frac{(4\pi)^{2h-1}|B_{2h}|}{2(2h)!} -\sum_{m=1}^\infty \sigma_{1-2h}(m) \Big(e^{-2\pi m}+ \G(2h-1,4\pi m) \cdot e^{2\pi m} \Big).
\end{equation}

As we see in Section \ref{harmo}, $V_k(z)$ arises along with $U_k(z)$ in a naturally occurring harmonic Maass form of holomorphic weight $k$ that seems to have been first studied by Pribitkin in \cite{Pr00}. 
Define $\varepsilon(k)$ to be $2$ if $k\gqs 0$ and $1$ if $k< 0$.
Then for  all $k \in 2\Z$, the Maass form in question, by an application of  Theorem \ref{main2}, is
\begin{equation*}
 \ei_k(z) =
1  +
\frac{\varepsilon(k)}{\zeta(1-k)} \left[\frac{\zeta(2-k)}{(2\pi i)^k} \left(\frac{y}{\pi}\right)^{1-k}
 +
U_k(z)
+
   V_k(z)\right].
\end{equation*}


\section{Applying the raising and lowering operators} \label{maa}
Define the raising and lowering operators of Maass as
$$
R_k := 2iy \frac{\partial}{\partial z} + \frac k2 \qquad \text{and} \qquad L_k := -2iy \frac{\partial }{\partial \overline{z}} - \frac k2
$$
respectively. Here $\frac{\partial }{\partial z} := \frac 12(\frac \partial {\partial x}-i\frac \partial {\partial y})$, $\frac{\partial }{\partial \overline{z}} := \frac 12(\frac \partial {\partial x}+i\frac \partial {\partial y})$
and it is easy to check that $L_k f = \overline{R_{-k} \overline{f}}$.
If a function has non-holomorphic weight $k$, in the sense of \e{wtk}, then applying $R_k$ raises its weight to $k+2$ and applying $L_k$ lowers its weight to $k-2$; see for example \cite[Lemma 2.1.1]{Bu}. The weight $k$ Laplacian is
\begin{equation*}
  \Delta_k :=-4y^2\frac{\partial ^2}{\partial z \partial \overline{z}}+ik y \left(\frac{\partial }{\partial z} + \frac{\partial }{\partial \overline{z}}\right)
= -y^2 \left( \frac{\partial ^2}{\partial x^2} +  \frac{\partial ^2}{\partial y^2}\right) +ik y \frac{\partial }{\partial x}
\end{equation*}
and, as in \cite[Sect. 2.1]{Bu}, satisfies
\begin{equation} \label{delsat}
  \Delta_k = -L_{k+2} \circ R_k -\frac k2\left(1+\frac k2 \right) = -R_{k-2} \circ L_k +\frac k2\left(1-\frac k2 \right).
\end{equation}
Our definitions of $R_k,$ $L_k$ and $\Delta_k$ follow those of Bump in \cite{Bu} and have a symmetrical effect on the Eisenstein series as we see next. Maass's original operators were $K_k=R_k$, $\Lambda_k=-L_k$ and he used the Laplacian with the opposite sign. These conventions of Maass are followed in \cite{Ja94} and \cite[Sect. 4]{DFI02}, for example. To act on spaces of functions transforming with holomorphic weight $k$, such as harmonic Maass forms, the operators must be adjusted; see for example \e{egex} at the start of Section \ref{harmo}.


Assuming $k \in 2\Z$ and $\Re(s)>1$, a calculation with \e{defe} shows
\begin{align}
R_k E_k(z,s) &= (s+k/2)E_{k+2}(z,s), \label{reis}\\
  L_k E_k(z,s) &= (s-k/2)E_{k-2}(z,s). \label{leis}
\end{align}
Hence
\begin{align}
R_k E^*_k(z,s) &= \begin{cases} E^*_{k+2}(z,s) \quad & k \gqs 0; \\
 (s+|k|/2-1)(s-|k|/2) E^*_{k+2}(z,s) \quad & k < 0, \end{cases} \label{raise}\\
  L_k E^*_k(z,s) &= \begin{cases} E^*_{k-2}(z,s) \quad & k \lqs 0; \\
 (s+|k|/2-1)(s-|k|/2) E^*_{k-2}(z,s) \quad & k > 0. \end{cases} \label{lower}
\end{align}
As a consequence of \e{delsat} -- \e{lower}, for $k \in 2\Z$ and $\Re(s)>1$,
\begin{equation}\label{eigeis}
  \Delta_k E_k(z,s) = s(1-s) E_k(z,s), \qquad \Delta_k E^*_k(z,s) = s(1-s) E^*_k(z,s).
\end{equation}

A convenient notation for $n$ applications of the raising operator, going from weight $0$ to weight $2n$, is
$$
  R^n_0  := R_{2n-2} \circ R_{2n-4}  \circ \cdots  \circ R_{2}  \circ R_{0}.
$$
Therefore, with $k/2 \in \Z_{\gqs 0}$ and $\Re(s)>1$,
\begin{equation}\label{multr}
  R_0^{k/2}E^*_0(z,s) = E^*_k(z,s)
\end{equation}
and to prove Theorem \ref{allk2} we apply $R_0^{k/2}$ to \e{e0}.
The next lemma is required for this and was first derived in \cite[Sect. 5]{O02} by simplifying recurrences. We give a new proof based on the properties of the polynomials $P_r^n(x)$ defined in \e{pnr}. For $n \gqs 0$ these polynomials may also be expressed as
\begin{equation*}
  P_r^n(x) = \binom{2n}{n+r} \sum_{\ell=|r|}^n  (n-\ell)! \binom{n+r}{\ell+r} \binom{n-r}{\ell-r}  (-x)^\ell
\end{equation*}
or, in terms of the generalized Laguerre polynomials $L_m^{(\alpha)}(x)$, as
\begin{equation} \label{lag}
  P_r^n(x) = \frac{(2n)!}{(n+r)!}(-x)^r L_{n-r}^{(2r)}(x) \qquad \text{with} \qquad L_m^{(\alpha)}(x):=\sum_{j=0}^m \binom{m+\alpha}{m-j}\frac{(-x)^j}{j!}.
\end{equation}
\begin{lemma} \label{rkw}
For all $k/2 \in \Z_{\gqs 0}$, $m\in \R_{\neq 0}$ and $z\in \H$ we have
\begin{equation}\label{twai}
R^{k/2}_0 W_s(mz) = 2^{-k}\sum_{r=-k/2}^{k/2} \left(\frac{m}{|m|}\right)^r  P^{k/2}_r(4\pi my) \cdot W_{s+r}(mz).
\end{equation}
\end{lemma}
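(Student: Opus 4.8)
The plan is to prove \e{twai} by induction on $k/2$, using the explicit action of a single raising operator $R_j$ on a Whittaker function $W_s(mz)$ and then matching the resulting recurrence against the recurrence satisfied by the polynomials $P^n_r(x)$. First I would record the effect of $R_k$ on $W_s(mz)$. Writing $W_s(mz) = 2|my|^{1/2} K_{s-1/2}(2\pi|m|y) e^{2\pi i m x}$ and using $R_k = 2iy\,\partial/\partial z + k/2 = iy(\partial_x - i\partial_y) + k/2$, a direct computation (applying the contiguous relations for $K_\nu$, namely $K_\nu'(t) = -\tfrac12(K_{\nu-1}(t)+K_{\nu+1}(t))$ and $K_{\nu-1}(t)-K_{\nu+1}(t) = -\tfrac{2\nu}{t}K_\nu(t)$) should yield an identity of the shape
\[
R_j W_s(mz) = a_j(my)\, W_{s+1}(mz) + b_j(my)\, W_s(mz) + c_j(my)\, W_{s-1}(mz),
\]
where $a_j, b_j, c_j$ are explicit linear functions of $4\pi m y$ (with $a_j$ the coefficient $-\tfrac{m}{|m|}\cdot$const, a shift term, and $c_j$ involving $(s-1/2+\cdots)$). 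In fact, comparing with the $k/2=1$ case of \e{twai}, which reads $R_0 W_s(mz) = 2^{-2}\big[ -\tfrac{m}{|m|} W_{s+1}(mz) + 2(4\pi m y - \text{?})W_s(mz) - \tfrac{|m|}{m}(\cdots) W_{s-1}(mz)\big]$, pins down exactly what these coefficients must be; I would verify that base case directly from \e{pnr} ($P^1_{-1},P^1_0,P^1_1$) and the single-operator computation.

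Next, assuming \e{twai} holds for some $n=k/2\gqs 0$, I apply $R_{2n}$ to both sides. On the left this gives $R_0^{n+1}W_s(mz)$. On the right, $R_{2n}$ must be applied to each term $2^{-2n}(\tfrac{m}{|m|})^r P^n_r(4\pi m y)\, W_{s+r}(mz)$; since $R_{2n}$ is a first-order differential operator it acts by the product rule, hitting both the polynomial factor $P^n_r(4\pi m y)$ (a function of $y$ alone, differentiated via $-2iy\cdot\tfrac{i}{2}\partial_y = y\partial_y$ up to the constant, giving $4\pi m y\,(P^n_r)'(4\pi m y)$) and the Whittaker factor (via the three-term relation above). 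Collecting the coefficient of $W_{s+r}(mz)$ on the right-hand side then produces, after clearing $2^{-2(n+1)}$ and the sign factor $(\tfrac{m}{|m|})^r$, a relation expressing $P^{n+1}_r(x)$ as an explicit combination of $P^n_{r-1}(x)$, $P^n_r(x)$, $P^n_{r+1}(x)$, $x\,(P^n_r)'(x)$ and lower-degree adjustments — a \emph{three-term ladder recurrence} in the upper index $n$ mixing neighboring values of $r$. The remaining task is purely a polynomial identity: show that the $P^n_r(x)$ defined by the explicit sum \e{pnr} satisfy precisely this recurrence.

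That polynomial verification is where I expect the real work to be, and I would do it using the Laguerre representation \e{lag}, $P^n_r(x) = \tfrac{(2n)!}{(n+r)!}(-x)^r L^{(2r)}_{n-r}(x)$. The needed three-term recurrence then translates into a combination of the standard contiguous relations for generalized Laguerre polynomials — the recurrences relating $L^{(\alpha)}_m$, $L^{(\alpha\pm1)}_{m}$, $L^{(\alpha)}_{m\pm 1}$, and the derivative formula $\tfrac{d}{dx}L^{(\alpha)}_m = -L^{(\alpha+1)}_{m-1}$ together with $x\tfrac{d}{dx}L^{(\alpha)}_m = m L^{(\alpha)}_m - (m+\alpha)L^{(\alpha)}_{m-1}$. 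Substituting these in and tracking the factorial prefactors $\tfrac{(2n)!}{(n+r)!}$ versus $\tfrac{(2n+2)!}{(n+1+r)!}$ should collapse everything to a tautology; alternatively, one can verify the recurrence coefficient-by-coefficient directly from \e{pnr} by comparing the coefficient of $(-x)^\ell$ on both sides, which reduces to an elementary identity among the multinomial-type coefficients $\tfrac{(2n)!}{(n-\ell)!(\ell+r)!(\ell-r)!}$. The main obstacle is bookkeeping: getting the three coefficient functions $a_j,b_j,c_j$ exactly right (including the $s$-dependent pieces and the shift/translation term coming from $\partial_y$ hitting the $|my|^{1/2}$ and exponential in $W_s$), since an error there propagates through the whole induction; once those are correct, matching to the Laguerre contiguous relations is mechanical. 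Finally, since all manipulations are termwise on a convergent expansion and $W_{s\pm r}$ decays exponentially, no convergence subtleties arise, and the case $k<0$ is not needed here (Lemma \ref{rkw} is stated only for $k/2\in\Z_{\gqs 0}$).
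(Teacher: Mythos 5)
Your proposal follows essentially the same route as the paper: induction on $k/2$, with the inductive step carried out by applying one raising operator via the product rule, using the three-term contiguous relation for $K_\nu$ (equivalently for $W_{s+r}$) on the Whittaker factor and a derivative/ladder recurrence for $P^n_r$ (verifiable from \e{pnr} or the Laguerre form \e{lag}) on the polynomial factor. The only caution is that the coefficients in the single-operator identity \e{oo2} turn out to be independent of $s$ (only $K_\nu'=-\tfrac12(K_{\nu+1}+K_{\nu-1})$ is needed, not the $\nu/t$ relation), so the $s$-dependent pieces you anticipate do not actually appear; otherwise the bookkeeping works out exactly as you describe.
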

\begin{proof}
Verify that
\begin{equation*}
  4\frac{d}{dx}P_r^n(x) = \frac 1x P_r^{n+1}(x)+\left(2-\frac{4n+2}{x} \right)P_r^n(x) + P_{r+1}^n(x)
+ P_{r-1}^n(x)
\end{equation*}
and hence
\begin{multline} \label{oo1}
  y \frac{d}{dy} P_r^{k/2}(4\pi my) = \frac 14 P_r^{k/2+1}(4\pi my) +\left(2\pi m y-\frac{k+1}2\right) P_r^{k/2}(4\pi my)
\\
+\pi m y \left(P_{r+1}^{k/2}(4\pi my) +P_{r-1}^{k/2}(4\pi my)\right).
\end{multline}
We also require the identity
\begin{equation}\label{oo2}
2iy \frac{d}{dz} W_{s+r}(m z) = -\pi |m|y \bigl( W_{s+r-1}(m z) + W_{s+r+1}(m z) \bigr) +\left(\frac 12-2\pi m y\right) W_{s+r}(m z) 
\end{equation}
which follows from
$
  2\frac{d}{dy} K_w(y) = -K_{w+1}(y)- K_{w-1}(y)
$.
The proof now proceeds by induction on $k/2$. The equality \e{twai} is true when $k=0$ and if it holds for $k$ then
\begin{align*}
  R^{(k+2)/2}_0 W_s(mz) & = R_k \circ R^{k/2}_0 W_s(mz) \\
   & = 2iy \frac{d}{dz}\left(R^{k/2}_0 W_s(mz)\right) +\frac k2 \left(R^{k/2}_0 W_s(mz)\right),
\end{align*}
giving
\begin{multline*}
  R^{(k+2)/2}_0 W_s(mz) = 2^{-k}\sum_{r=-k/2}^{k/2} \left(\frac{m}{|m|}\right)^r  \bigg\{ y \frac{d}{dy} P_r^{k/2}(4\pi my) \cdot W_{s+r}(mz) \\
  +P_r^{k/2}(4\pi my) \cdot 2iy \frac{d}{dz} W_{s+r}(mz) + \frac k2 P_r^{k/2}(4\pi my) \cdot W_{s+r}(mz) \bigg\}.
\end{multline*}
Simplifying this with \e{oo1} and \e{oo2} shows the induction step and completes the proof.
\end{proof}

For all $s\in \C$ an induction shows
\begin{equation}\label{alz}
 R^{k/2}_0 \big(\theta(s) y^s \big) = \theta_k(s) y^s
\end{equation}
and it is easy to check that $\theta_k(s)y^s + \theta_k(1-s)y^{1-s}$ for $k \neq 0$ is an entire function of $s$.  Lemma \ref{rkw} and \e{alz} show that formally applying the raising operator to each term in the weight $0$ expansion \e{e0} produces the weight $k\gqs 0$ expansion \e{ekzs2b}.
The relation \e{connj} allows us to access negative weights and is equivalent to applying the lowering operator. For this, conjugate the right side of \e{twai} and replace $s$ with $\overline{s}$.
With (\ref{whit}) it  may be verified that
$
\overline{W_{\overline{s}+r}(m z)} = W_{s+r}((-m)z)
$
and hence, with our definition of $P^n_r(x)$ for negative $n$ after \e{pnr}, we formally obtain  \e{ekzs2b} for negative $k$ also. In the next section we make the necessary estimates to prove these expansions are valid.

\section{Bounds for $E^*_k(z,s)$} \label{bounds}

\subsection{Initial estimates}
Note that \e{bessel} implies $|K_w(y)|\lqs K_r(y)$ for all $w\in \C$ with real part $r$. Also $K_r(y)$ is always positive and equals $K_{-r}(y)$.

\begin{lemma} \label{kbss}
For $r\in \R$ and $y>0$,
\begin{equation} \label{thd}
  K_r(y) <2^{2|r|+1}\left(1+\frac{\G(|r|+1)}{y^{|r|+1}} \right) e^{-y}.
\end{equation}
\end{lemma}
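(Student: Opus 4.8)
The plan is to work entirely from the integral representation \eqref{bessel}. Since $K_r(y)=K_{|r|}(y)$ and the right-hand side of \eqref{thd} depends on $r$ only through $|r|$, I would first reduce to the case $r\ge 0$. Using the elementary identity $t+1/t=2+(t-1)^2/t$ inside \eqref{bessel} pulls the factor $e^{-y}$ out front:
\[
K_r(y)=\frac{e^{-y}}{2}\int_0^\infty e^{-\frac{y}{2}\cdot\frac{(t-1)^2}{t}}\,t^{r-1}\,dt .
\]
Because $g(t):=(t-1)^2/t\ge 0$ is invariant under $t\mapsto 1/t$, the substitution $t\mapsto 1/t$ converts the part of the integral over $(0,1)$ into one over $(1,\infty)$ with $t^{r-1}$ replaced by $t^{-r-1}$, so that
\[
K_r(y)=\frac{e^{-y}}{2}\int_1^\infty e^{-\frac{y}{2}g(t)}\bigl(t^{r-1}+t^{-r-1}\bigr)\,dt ,
\]
and the whole problem reduces to bounding this last integral by $2^{2r+2}\bigl(1+\Gamma(r+1)/y^{r+1}\bigr)$.

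Next I would split $[1,\infty)$ at $t=2$. On $[1,2]$ I use only $e^{-\frac{y}{2}g(t)}\le 1$, so that piece contributes at most $\int_1^2(t^{r-1}+t^{-r-1})\,dt\le\max(1,2^{r-1})+1$, which is comfortably below $2^{2r+1}$. On $[2,\infty)$ I use $g(t)=(t-2)+1/t\ge t-2$, hence $e^{-\frac{y}{2}g(t)}\le e^{-\frac{y}{2}(t-2)}$, and the substitution $u=t-2$ turns the two summands into $\int_0^\infty e^{-yu/2}(u+2)^{\pm r\mp1}\,du$. For the exponent $-r-1$ one has $(u+2)^{-r-1}\le 2^{-r-1}$, bounding that integral by $2^{-r}/y$. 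For the exponent $r-1$ I would bound the integrand by an explicit polynomial in $u$ — when $0\le r\le1$ by $2^{r-1}$, and when $r\ge1$ by $2^{r-1}u^{r-1}+4^{r-1}$ (using $u+2\le 2\max(u,2)$) — and then integrate against $e^{-yu/2}$, the only nontrivial integral being $\int_0^\infty e^{-yu/2}u^{r-1}\,du=\Gamma(r)(2/y)^r$. This leaves a bound for the full integral of the shape $2+(1+2^r)/y$ when $0\le r\le 1$, and $(1+2^{r-1})+(1+2^{2r-1})/y+2^{2r-1}\Gamma(r)/y^r$ when $r\ge 1$.

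The final step is to check that these bounds lie below $2^{2r+2}\bigl(1+\Gamma(r+1)/y^{r+1}\bigr)$, which I would do by treating $y\ge 1$ and $0<y<1$ separately. The one term needing thought is $2^{2r-1}\Gamma(r)/y^r$: its shape $\Gamma(r)y^{-r}$ is not the target's $\Gamma(r+1)y^{-r-1}$, but since $\Gamma(r)/y^r=(y/r)\,\Gamma(r+1)/y^{r+1}$ one gets $\Gamma(r)/y^r\le\Gamma(r+1)/y^{r+1}$ whenever $y\le r$, while for $y>r$ the elementary bound $\Gamma(r)\le r^{r-1}$ for $r\ge1$ (log-convexity of $\Gamma$, with equality at $r=1$) gives $\Gamma(r)/y^r<\Gamma(r)/r^r\le 1$; so that term is always absorbed either by $2^{2r+2}$ or by $2^{2r+2}\Gamma(r+1)/y^{r+1}$. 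The remaining ``bare'' terms $c/y$ with $c$ bounded go into $2^{2r+2}$ when $y\ge1$ and into $2^{2r+2}\Gamma(r+1)/y^{r+1}$ when $0<y<1$, using that $1/y\le 1/y^{r+1}$ there together with $\Gamma(x)>\tfrac12$ for $x>0$.

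I expect the main obstacle to be precisely this last bookkeeping: no individual estimate is delicate, but one must make sure the sum of the half-dozen error terms stays under $2^{2|r|+1}\bigl(1+\Gamma(|r|+1)/y^{|r|+1}\bigr)$ for every $y>0$ and every $r\ge0$ simultaneously. The very generous constant $2^{2|r|+1}$ in the statement (rather than something slowly growing) is exactly what supplies the slack to push this through with only elementary inequalities, at the cost of the case splits $r\le1$ versus $r\ge1$ and $y\le1$ versus $y\ge1$.
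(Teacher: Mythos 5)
Your proposal is correct and follows essentially the same route as the paper: fold the Bessel integral onto $[1,\infty)$ using the $t\mapsto 1/t$ symmetry, split at $t=2$, bound the lower piece by $e^{-y}$ times a constant, and bound the upper piece by an exponential whose moments produce the $\G$ factor. The one place you diverge is that your tail integrand carries the exponent $r-1$ rather than $r$, producing a $\G(r)y^{-r}$ term that forces the extra absorption argument (the case split $y\lessgtr r$ and $\G(r)\lqs r^{r-1}$); the paper instead bounds $(T+u)^{r-1}$ by $T^{-1}(T+u)^{r}$ before expanding, which lands directly on $\G(r+1)y^{-r-1}$ and avoids that bookkeeping.
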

\begin{proof}
Assume that $r \gqs 0$. We have
\begin{align*}
  2K_r(y) = K_r(y)+K_{-r}(y) & = \frac 12 \int_0^\infty e^{-y(t+1/t)/2} (t^{r}+ t^{-r})\, \frac{dt}{t} \\
 & =  \int_1^\infty e^{-y(t+1/t)/2} (t^{r}+ t^{-r})\, \frac{dt}{t}.
\end{align*}
For $T\gqs 1$ the lower part of this last integral is
\begin{equation} \label{bunr}
  \int_{1}^T e^{-y(t+1/t)/2} (t^{r}+ t^{-r})\, \frac{dt}{t} < \int_{1}^T e^{-y} (2 T^{r})\, \frac{dt}{t} = 2e^{-y} T^r \log T.
\end{equation}
The upper part is
\begin{align}
  \int_{T}^\infty e^{-y(t+1/t)/2} (t^{r}+ t^{-r})\, \frac{dt}{t} & <
  \int_{T}^\infty e^{-y t/2} (2 t^{r-1})\, dt \notag\\
   & = 2 \int_{0}^\infty e^{-y (T+u)/2} (T+u)^{r}\, \frac{du}{T+u} \label{icew}\\
& < \frac 2T e^{-y T/2} \int_{0}^\infty e^{-y u/2} (T+u)^{r}\, du. \notag
\end{align}
Using the bound $(T+u)^{r} \lqs (2T)^r+(2u)^r$ for $T,u,r\gqs 0$, computing the resulting integrals and adding to \e{bunr} shows
\begin{equation*}
  K_r(y)  < e^{-y} T^r \log T +  \frac{2}{T} e^{-y T/2} \left(\frac{(2T)^{r}}y+ \frac {4^r \G(r+1)}{y^{r+1}} \right).
\end{equation*}
 Choosing $T=2$ and simplifying completes the proof.
\end{proof}
If in \e{icew} we instead use
\begin{equation*}
  \frac{(T+u)^r}{T+u} \lqs \frac{(2T)^r+ (2u)^r}{T+u} \lqs 2^r\left(T^{r-1}+u^{r-1}\right),
\end{equation*}
then the same arguments lead to
\begin{equation} \label{thd2}
  K_r(y) <4^{|r|}\left(1+\frac 1y+\frac{\G(|r|)}{y^{|r|}} \right) e^{-y} \qquad \text{for} \qquad y>0,r\in \R_{\neq 0}.
\end{equation}
This improves \e{thd} for small $y$ but is not valid at $r=0$ as the $\G$ function has a pole there.

\begin{prop} \label{trkr} Let $S_k(z,s)$ be the series over $m$ on the right of \e{ekzs2b}. For all $z\in \H$ and $s\in \C$ this series  is absolutely convergent and satisfies $S_k(z,s)=S_k(z,1-s)$. Bounding the absolute value of its terms for $\sigma:=\Re(s) \gqs 1/2$ yields
\begin{equation} \label{skiz}
  S_k(z,s) \ll \left( y^{|k|/2+1/2}+y^{-\sigma -|k|/2-1}\right) e^{-2\pi y}
\end{equation}
for an implied constant depending only on $k$ and $s$.
\end{prop}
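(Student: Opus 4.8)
The plan is to bound the general term of $S_k(z,s)$ using the explicit formula \e{ekzs2b} together with the Bessel estimates of Lemmas \ref{kbss} and the remark giving \e{thd2}. Recall that the $m$-th term is
$$
\frac{\sigma_{2s-1}(|m|)}{|m|^s}\sum_{r=-|k|/2}^{|k|/2}\left(\frac{m}{|m|}\right)^r P^{k/2}_r(4\pi my)\,W_{s+r}(mz),
$$
and that $W_{s+r}(mz)=2|my|^{1/2}K_{s+r-1/2}(2\pi|m||y|)e^{2\pi i m x}$. First I would handle the arithmetic factor: since $|\sigma_{2s-1}(|m|)|\lqs \sigma_{2\sigma-1}(|m|)\ll |m|^{\max(2\sigma-1,0)+\ee}$ and $|m|^{-\sigma}$ appears, the Dirichlet series $\sum_m |m|^{\text{(polynomial in $\sigma$)}}$ converges absolutely once we pull out the exponential decay of the Bessel function; the polynomial growth in $m$ is harmless against $e^{-2\pi|m|y}$. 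So the whole point is to get the right powers of $y$ (uniform in $m$ only through the factor $|m|^A e^{-2\pi|m|y}$, which is $\ll e^{-2\pi y}$ times a convergent sum).

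The key estimates are: $|P^{k/2}_r(4\pi my)|\ll (1+|my|)^{|k|/2}$ (a polynomial of degree $|k|/2$ in its argument, with constants depending on $k$); $|W_{s+r}(mz)|\ll |my|^{1/2}K_{\sigma+r-1/2}(2\pi|m|y)$ by the observation $|K_w(y)|\lqs K_{\Re(w)}(y)$ preceding Lemma \ref{kbss}; and then \e{thd}/\e{thd2} to write $K_{\sigma+r-1/2}(2\pi|m|y)\ll (1+(|m|y)^{-|\sigma+r-1/2|-1})e^{-2\pi|m|y}$, the implied constant depending on $k$ and $s$. Multiplying these three contributions, the worst powers of $y$ come from the extreme values $r=\pm|k|/2$: the factor $(|my|)^{|k|/2}$ from $P$ times $(|my|)^{1/2}$ from $W$ gives the growing term $y^{|k|/2+1/2}$, while combining the decaying Bessel tail $(|my|)^{-|\sigma+r-1/2|-1}$ with the at-worst $(|my|)^{|k|/2+1/2}$ prefactor, maximized at $r=|k|/2$ with $|\sigma+|k|/2-1/2|=\sigma+|k|/2-1/2$ (valid since $\sigma\gqs 1/2$), produces $y^{-(\sigma+|k|/2-1/2)-1+|k|/2+1/2}=y^{-\sigma-|k|/2-1}$. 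Since $\sigma\gqs 1/2$, all intermediate powers of $y$ are dominated by these two endpoints (for $y$ bounded away from $0$ the first dominates, for small $y$ the second), so the term-by-term bound is $\ll (y^{|k|/2+1/2}+y^{-\sigma-|k|/2-1})e^{-2\pi y}$ as claimed; summing over $m$ only multiplies by the convergent constant $\sum_m |m|^{A}e^{-2\pi(|m|-1)y}$, which is itself $\ll 1$ uniformly for $y$ bounded below and absorbed into the implied constant otherwise (one checks the sum over $m\gqs 2$ contributes strictly faster decay). The absolute convergence of $S_k(z,s)$ on all of $\C$ follows at once, and the symmetry $S_k(z,s)=S_k(z,1-s)$ is read off from $\sigma_w(m)=m^w\sigma_{-w}(m)$ and $K_w=K_{-w}$ exactly as in the paragraph after \e{fek}, noting that replacing $s$ by $1-s$ permutes the summands $r\leftrightarrow -r$.

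The main obstacle, I expect, is bookkeeping the powers of $y$ cleanly enough to see that $y^{|k|/2+1/2}$ and $y^{-\sigma-|k|/2-1}$ really are the two governing exponents among all $2\cdot(|k|/2)+1$ choices of $r$ and the two summands in \e{thd2} — in particular verifying that no combination of a middle $r$ with the $(|my|)^{-1}$-type term beats these endpoints when $\sigma\gqs 1/2$, which is where that hypothesis is used. A secondary nuisance is that \e{thd2} fails at $r=|k|/2$ precisely when $\sigma+|k|/2-1/2=0$, i.e. $\sigma=1/2$ and $k=0$ (the $K_0$ pole issue flagged after \e{thd2}); there one falls back on \e{thd}, which costs only a harmless extra $\log$-free $e^{-y}$ factor and does not change the stated bound. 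Everything else — the divisor bound, the Laguerre-polynomial degree bound, summing a rapidly convergent series — is routine.
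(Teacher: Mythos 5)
Your overall strategy (bound the arithmetic factor, the polynomial $P^{k/2}_r$, and the Bessel factor separately, then sum over $m$) is the same as the paper's, and the symmetry $S_k(z,s)=S_k(z,1-s)$ via $r\leftrightarrow -r$ is handled correctly. But there is a genuine gap in the derivation of \e{skiz}, in two places. First, your exponent arithmetic is wrong: $-(\sigma+|k|/2-1/2)-1+|k|/2+1/2$ equals $-\sigma$, not $-\sigma-|k|/2-1$, so the combination you single out (top degree of $P^{k/2}_r$ times the Bessel tail at $r=|k|/2$) does not produce the dominant term. Second, and more seriously, you dismiss the sum over $m$ as a ``convergent constant absorbed into the implied constant.'' The implied constant in \e{skiz} depends only on $k$ and $s$, so the bound must be uniform as $y\to 0$, and $\sum_{m}m^{A}e^{-2\pi(m-1)y}$ is \emph{not} $O(1)$ there: it grows like $\G(A+1)/y^{A+1}$. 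This is precisely where the missing power of $y$ lives. In the paper's computation (see \e{oih2} and \e{bbgd}), one must keep the powers of $m$ and $y$ coupled as $(my)^a$: the term $y^{-\sigma-|k|/2-1}$ arises from the \emph{constant} term of $P^{|k|/2}_r$ multiplied by the Bessel tail $(my)^{-t}$ at $r=|k|/2$ (so $t=\sigma+|k|/2+1/2$), which leaves $\sum_m m^{-|k|/2}e^{-2\pi my}\ll e^{-2\pi y}(1+y^{-1})$, the extra $y^{-1}$ supplying the final $-1$ in the exponent. By decoupling the $m$- and $y$-dependence you either lose this term entirely or, if you reinstate the $m$-sum correctly with your decoupled exponent $A$, you get a bound worse than \e{skiz}.

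To repair the argument you need the quantitative estimate \e{bbgd} (or an equivalent comparison of $\sum_m m^{\rho}e^{-2\pi my}$ with $\G(\rho+1)(2\pi y)^{-\rho-1}$) applied to each of the four cross terms coming from $(1+(my)^{|k|/2})(1+(my)^{-t})$, for each value of $t=|\sigma+r-1/2|+1$; checking all of these shows the two governing exponents really are $|k|/2+1/2$ and $-\sigma-|k|/2-1$. Your fallback to \e{thd} when \e{thd2} degenerates at $r=0$, $\sigma=1/2$ is fine, and the rest (the divisor bound, reduction to $\sigma\gqs 1/2$, reduction to $k\gqs 0$) matches the paper.
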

\begin{proof}
As we saw in \e{fek}, the terms of $S_k(z,s)$ are invariant as $s \to 1-s$, so we may assume $\sigma \gqs 1/2$.
Simple bounds show
\begin{equation*}
  \left| |m|^{-s} \sigma_{2s-1}(|m|)\right| \lqs |m|^\sigma, \qquad \left| P_r^{k/2}(4\pi my) \right| \lqs (4\pi)^{|k|/2}(|k|+1)!\left(1+(my)^{|k|/2} \right)
\end{equation*}
and so we may also assume for simplicity that $k\gqs 0$.
Hence
\begin{equation} \label{oih}
  S_k(z,s) \ll y^{1/2}\sum_{m=1}^\infty m^{\sigma +1/2}\left(1+(my)^{k/2}\right)\sum_{r=-k/2}^{k/2} K_{\sigma+r-1/2}(2\pi m y)
\end{equation}
and Lemma \ref{kbss} implies
\begin{equation} \label{oih2}
  S_k(z,s)  \ll y^{1/2}\sum_t \sum_{m=1}^\infty m^{\sigma +1/2}\left(1+(my)^{k/2}\right)\left(1+\frac{1}{(m y)^{t}}\right) e^{-2\pi m y}
\end{equation}
where $t\gqs 1$ takes the values $|\sigma+r-1/2|+1$ for integers $r$ with $-k/2\lqs r\lqs k/2$.

For $y>0$, $r\in \R$ and $r':=\max(0,r)$, the inequality
\begin{equation}\label{bbgd}
\sum_{m=1}^\infty m^r e^{-m y} \lqs e^{-y} \left(1+2^{r'} e^{-y} + 3^{r'} \frac{\G(r'+1)}{y^{r'+1}} \right)
\end{equation}
 follows by comparing the series on the left to the $\G$ function integral.   Expanding \e{oih2} and employing \e{bbgd} proves \e{skiz}. The $y^{-\sigma -|k|/2-1}$ term in \e{skiz} comes from estimating
\begin{equation*}
  y^{1/2} \sum_{m=1}^\infty m^{\sigma +1/2}\frac{1}{(m y)^{t}} e^{-2\pi m y} \qquad \text{when} \qquad t=\sigma+k+1/2. \qedhere
\end{equation*}
\end{proof}

\subsection{The analytic continuation of $E_k^*(z,s)$}

\begin{proof}[Proof of Theorem \ref{allk2}]
Let
\begin{equation*}
  S_k^*(z,s) :=  \theta_k(s) y^s+\theta_k(1-s) y^{1-s} +S_k(z,s)
\end{equation*}
 be the right side of \e{ekzs2b}. For $\Re(s)>1$ and $k=0$ we have seen in \e{e0} that $E_0^*(z,s) = S_0^*(z,s)$.  Applying $R_0$ to $E_0^*(z,s)$ gives $E_2^*(z,s)$ and, continuing this procedure as  in \e{multr}, $R_0^{k/2}E^*_0(z,s)$ equals $E_{k}^*(z,s)$.
The terms of $S_k^*(z,s)$ are differentiable in $x$ and $y$ with continuous derivatives. We saw in Section \ref{maa} that $S_k^*(z,s)$ is obtained by repeatedly raising the terms of $S_0^*(z,s)$ and is absolutely convergent by Proposition \ref{trkr}. Suppose we have established that $E_k^*(z,s) = S_k^*(z,s)$. To show that $E_{k+2}^*(z,s) = S_{k+2}^*(z,s)$, we require the uniform convergence of the raising operator's derivatives. Let $S_k(z,s)_x$ be the series $S_k(z,s)$ but with each term replaced by its partial derivative with respect to $x$ and define similarly  $S_k(z,s)_y$ with the partial derivatives with respect to $y$. Then $S_k(z,s)_x$ may be bounded as in \e{oih} but with an extra $m$ factor. The proof of Proposition \ref{trkr} goes through and shows that $S_k(z,s)_x$ converges uniformly for $z$ in compact subsets of $\H$. The terms of $S_k(z,s)_y$ may be computed with \e{oo1} and \e{oo2}. We find they are bounded by the estimates for  $S_{k+2}(z,s)$ in Proposition \ref{trkr} and hence also converge uniformly.

These results prove that \e{ekzs2b} is true for $\Re(s)>1$ and even $k\gqs 0$. The relation \e{connj} extends this to $k<0$. The estimates in Proposition \ref{trkr} are valid for all $s \in \C$. Since the bounds from Lemma \ref{kbss} and \e{bbgd} are uniform for $s$ in compact subsets of $\C$, Proposition \ref{trkr} also shows that, for each $z \in \H$, $S_k(z,s)$ is an entire function of $s$. In this way the Fourier expansion $S_k^*(z,s)$ gives the analytic continuation of $E_k^*(z,s)$ to all $s \in \C$. The only poles come from the constant term when $k=0$.
\end{proof}

The next corollary now follows  from Theorem \ref{allk2} and Proposition \ref{trkr}; see \cite[Cor. 3.5]{IwSp} for a similar estimate when $k=0$. We will need this result, and in particular its bound on $E^*_k(z,s)$  as $y \to 0$,  in the proof of Proposition \ref{RankinS}.

\begin{cor} \label{bnthc}
For all  $s\in \C$ with real part $\sigma$, we have
\begin{equation*}
 E^*_k(z,s)= \theta_k(s) y^s+\theta_k(1-s) y^{1-s} +O\left( \left(y^{|k|/2+1/2}+y^{-|\sigma-1/2|-|k|/2-3/2}\right)e^{-2\pi y}\right)
\end{equation*}
where the implied constant depends only on $k \in 2\Z$ and $\sigma$.
\end{cor}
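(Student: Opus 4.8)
The plan is to extract the bound directly from Theorem~\ref{allk2} and Proposition~\ref{trkr}; no genuinely new work is needed. By Theorem~\ref{allk2}, for every $s\in\C$ one has the identity
\begin{equation*}
  E^*_k(z,s) - \theta_k(s) y^s - \theta_k(1-s) y^{1-s} = S_k(z,s),
\end{equation*}
where $S_k(z,s)$ is the series over $m$ (and $r$) on the right of \e{ekzs2b}, as in Proposition~\ref{trkr}. When $k\neq 0$ both sides are entire in $s$; when $k=0$ the two sides are meromorphic and share exactly the simple poles at $s=0,1$ coming from the constant term, so the difference $S_0(z,s)$ is entire and the identity is valid throughout. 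Thus it suffices to bound $S_k(z,s)$ for arbitrary $s$, with the correct dependence on $k$ and $\sigma:=\Re(s)$.

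First I would reduce to the half-plane $\sigma\gqs 1/2$. Proposition~\ref{trkr} gives the absolute convergence of $S_k(z,s)$ for all $s$ together with $S_k(z,s)=S_k(z,1-s)$; hence, replacing $s$ by $1-s$ whenever $\sigma<1/2$, I may assume the argument has real part $\sigma':=\max(\sigma,1-\sigma)=|\sigma-1/2|+1/2\gqs 1/2$ without altering $S_k(z,s)$. Then \e{skiz} applied at that argument yields
\begin{equation*}
  S_k(z,s)\ll\left(y^{|k|/2+1/2}+y^{-\sigma'-|k|/2-1}\right)e^{-2\pi y},
\end{equation*}
and the routine identity $-\sigma'-|k|/2-1=-|\sigma-1/2|-|k|/2-3/2$ turns this into exactly the estimate claimed in the corollary.

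Finally I would check the stated dependence of the implied constant. Inspecting the proof of Proposition~\ref{trkr}, the ingredients are the divisor bound $\bigl||m|^{-s}\sigma_{2s-1}(|m|)\bigr|\lqs|m|^\sigma$, the polynomial bound on $P_r^{k/2}(4\pi my)$, the inequality $|K_w(y)|\lqs K_{\Re(w)}(y)$ combined with Lemma~\ref{kbss}, and the summation bound \e{bbgd}; each of these depends only on $k$ and on real parts of the relevant quantities, so the implied constant in \e{skiz}, and hence here, depends only on $k\in 2\Z$ and $\sigma$. There is essentially no obstacle in this argument: the only point needing a moment's care is the bookkeeping in the previous paragraph — namely confirming that passing to $\max(\sigma,1-\sigma)$ leaves $S_k(z,s)$ unchanged (via the functional equation) and that the exponents in \e{skiz} then line up with those asserted in the corollary.
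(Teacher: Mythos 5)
Your proposal is correct and follows exactly the route the paper intends: the paper simply asserts that the corollary "follows from Theorem \ref{allk2} and Proposition \ref{trkr}", and you have filled in the only nontrivial bookkeeping — using $S_k(z,s)=S_k(z,1-s)$ to pass to $\sigma'=|\sigma-1/2|+1/2\gqs 1/2$, matching the exponent $-\sigma'-|k|/2-1=-|\sigma-1/2|-|k|/2-3/2$, and checking that the implied constant in \e{skiz} really depends only on $k$ and $\sigma$. Nothing is missing.
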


By analytic continuation, the fundamental non-holomorphic weight $k$ transformation property \e{wtk} remains valid for $E^*_{k}(z,s)$ for all $s\in \C$. We next prove that the eigenvalue identity \e{eigeis} also extends to all $s$ (avoiding the poles $s=0,1$ when $k=0$).
With Theorem \ref{allk2} we have shown that for even $k \gqs 0$
\begin{equation}\label{e0kk}
E^*_{k}(z,s) = R_0^{k/2}\left( \theta(s) y^s + \theta(1-s) y^{1-s}\right) + \sum_{m \in \Z_{\neq 0}} \frac{ \sigma_{2s-1}(|m|)}{|m|^s} R_0^{k/2}\left(W_s(m z)\right).
\end{equation}
The relations in \e{delsat} imply $\Delta_k \circ  R_{k-2}=R_{k-2} \circ \Delta_{k-2}$ so that, for all $s\in \C$,
\begin{align*}
 \Delta_k E^*_{k}(z,s) & = \Delta_k \circ R_0^{k/2}\left( \theta(s) y^s+ \theta(1-s) y^{1-s}\right) + \sum_{m \in \Z_{\neq 0}} \frac{ \sigma_{2s-1}(|m|)}{|m|^s}  \Delta_k \circ  R_0^{k/2}\left(W_s(m z)\right)\\
   & =  R_0^{k/2} \circ \Delta_0\left( \theta(s) y^s+ \theta(1-s) y^{1-s}\right) + \sum_{m \in \Z_{\neq 0}} \frac{ \sigma_{2s-1}(|m|)}{|m|^s}    R_0^{k/2} \circ \Delta_0\left(W_s(m z)\right).
\end{align*}
Applying the identities
\begin{equation} \label{utah}
  \Delta_0 y^s = s(1-s) y^s, \qquad \Delta_0 W_s(m z) = s(1-s) W_s(m z)
\end{equation}
now gives
\begin{equation}\label{alls}
  \Delta_k E^*_{k}(z,s) = s(1-s) E^*_{k}(z,s) \qquad \text{for all} \qquad s\in \C
\end{equation}
where, as usual, the relation for $k<0$ follows from \e{connj}. Similar methods  extend the identities \e{reis} -- \e{lower} to all $s \in \C$.

\subsection{Whittaker functions}

Following the original derivation of Maass, the Fourier development of $E^*_k(z,s)$ is often given in terms of the
Whittaker functions $W_{\kappa,\mu}$.
In our normalization, Maass's result \cite[p. 210]{Maa} is
\begin{multline} \label{prb2}
   E^*_k(z,s)= \theta_k(s) y^s+\theta_k(1-s) y^{1-s}
+ \frac{(-1)^{k/2}}{\G(s-|k|/2)}\sum_{m=1}^\infty \frac{\sigma_{2s-1}(m)}{m^{s}}\\
\times \Bigl\{\G(s-k/2) \cdot W_{k/2,s-1/2}(4\pi m y)  \cdot e^{2\pi i m x}
+ \G(s+k/2) \cdot W_{-k/2,s-1/2}(4\pi m y)  \cdot e^{-2\pi i m x} \Bigr\}.
\end{multline}
For this see  \cite[Sect. 2]{ALR} where the results of Maass are translated into the $E_k(z,s)$ form.
The development \e{prb2} is equivalent to Eqs. (1.6), (1.7) in \cite{Ja94} when $s=1/2+it$.

The Whittaker functions satisfy
\begin{equation} \label{whde}
  \frac{d^2}{dy^2} W_{\kappa,\mu}(y)+\left[-\frac{1}{4}+\frac{\kappa}{y}+\frac{1/4-\mu^2}{y^2} \right] W_{\kappa,\mu}(y) =0,
\end{equation}
have exponential decay as $y\to \infty$ and possess various integral representations as in \cite[Chap. 16]{WW} and \cite[Sect. 13.16]{DLMF}. They are entire in the $\kappa$ and $\mu$ parameters with $W_{\kappa,\mu}(y) = W_{\kappa,-\mu}(y)$.
The Eisenstein series are eigenfunctions of $\Delta_k$ as we saw in \e{alls} and of period $1$ in $x$. Writing their Fourier expansion and separating variables gives a differential equation for the coefficients that may be transformed into \e{whde}.
This shows that  the terms in \e{prb2} must take the form they do, and for $n \in \Z_{\neq 0}$
\begin{equation}\label{ggw}
  \left( \Delta_k -s(1-s)\right)\left( W_{k\cdot n/|n|,s-1/2}(4\pi |n| y) e^{2\pi i nx}\right) = 0.
\end{equation}
The relationship of $W_s(z)$ from \e{whit} with  $W_{\kappa,\mu}(y)$ is
$$
W_s(z)=W_{0,s-1/2}(4\pi |y|)\cdot e^{2\pi i x}
$$
and the right equation in \e{utah} is the $k=0$ case of \e{ggw}.

The Fourier expansion of weight $k$ Eisenstein series for the group $\G_0(N)$ is described in \cite[Sect. 3]{AD}.
Similar developments to  \e{prb2} but employing other confluent hypergeometric variants are \cite[Eq. (1)]{Pr00}, \cite[Cor. 2.2]{KN09} and \cite[Thm. 7.2.9]{Miy} where characters are included.

Comparing \e{prb2} with Theorem \ref{allk2} provides the Whittaker function relation:
\begin{cor}
For all $k\in 2\Z$, $s\in \C$ and $y>0$,
\begin{equation} \label{whyt}
  W_{k/2,s-1/2}(y)=\frac{(-1)^{k/2}}{2^{|k|}}\frac{\G(s-|k|/2)}{\G(s-k/2)}\sum_{r=-|k|/2}^{|k|/2}P^{k/2}_r(y)\cdot W_{0,s+r-1/2}(y).
\end{equation}
\end{cor}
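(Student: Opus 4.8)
The plan is to read off \e{whyt} by comparing, coefficient by coefficient in the $x$-variable, the two Fourier expansions of $E^*_k(z,s)$ that are already on the table: the classical expansion \e{prb2} of Maass in terms of the Whittaker functions $W_{\kappa,\mu}$, and the expansion \e{ekzs2b} of Theorem \ref{allk2} in terms of the functions $W_s$. Both are periodic of period $1$ in $x$, so uniqueness of Fourier coefficients forces the coefficient of $e^{2\pi i m x}$ to agree in the two formulas.

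First I would fix $\Re(s)>1$ and extract the coefficient of $e^{2\pi i x}$, i.e.\ take $m=1$, where $\sigma_{2s-1}(1)=1$. From \e{prb2} this coefficient equals $(-1)^{k/2}\,\bigl(\G(s-k/2)/\G(s-|k|/2)\bigr)\,W_{k/2,s-1/2}(4\pi y)$, the $e^{-2\pi i x}$ piece of \e{prb2} contributing nothing. From \e{ekzs2b} the only summand of $\sum_{m\in\Z_{\neq 0}}$ carrying an $e^{2\pi i x}$ is the one with $m=1$ (those with $m\le -1$ produce $e^{2\pi i m x}$ with $m\le -1$, and the constant term $\theta_k(s)y^s+\theta_k(1-s)y^{1-s}$ is independent of $x$); using the relation $W_s(z)=W_{0,s-1/2}(4\pi|y|)\,e^{2\pi i x}$ recorded just above, together with $(m/|m|)^r=1$ and $P^{k/2}_r(4\pi\cdot 1\cdot y)=P^{k/2}_r(4\pi y)$ when $m=1$, that coefficient equals $2^{-|k|}\sum_{r=-|k|/2}^{|k|/2}P^{k/2}_r(4\pi y)\,W_{0,s+r-1/2}(4\pi y)$. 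Equating the two expressions, dividing through by the factor $(-1)^{k/2}\G(s-k/2)/\G(s-|k|/2)$, using $(-1)^{-k/2}=(-1)^{k/2}$, and finally renaming $4\pi y$ as $y$ yields \e{whyt} for $\Re(s)>1$.

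For fixed $y>0$ every term in \e{whyt} is a meromorphic function of $s$: the left side and the factors $W_{0,s+r-1/2}(y)$ are entire in $s$, the Bessel function $K_w(y)$ of \e{bessel} being entire in $w$, while $\G(s-|k|/2)/\G(s-k/2)$ is a rational function of $s$. Hence the identity theorem extends \e{whyt} from the half-plane $\Re(s)>1$ to all $s\in\C$, which is the corollary. I do not anticipate a real obstacle here; the only point that needs attention is the bookkeeping of the constants $(-1)^{k/2}$, $2^{-|k|}$ and the $\G$-quotient, and, in the range $k<0$, the sign convention $P^{k/2}_r(x)=(-1)^r P^{-k/2}_r(-x)$ from \e{pnr}. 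These take care of themselves, precisely because both \e{prb2} and \e{ekzs2b} were written with respect to the same normalization of $E^*_k(z,s)$, so the comparison is forced and there is no freedom to get the constants wrong.
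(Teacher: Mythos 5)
Your proposal is correct and is exactly the paper's argument: the corollary is stated immediately after the sentence ``Comparing \e{prb2} with Theorem \ref{allk2} provides the Whittaker function relation,'' and your term-by-term comparison of the $e^{2\pi i x}$ coefficients (with the rescaling $4\pi y\mapsto y$ and continuation in $s$) is just that comparison written out in full. The bookkeeping of $(-1)^{k/2}$, $2^{-|k|}$ and the $\G$-quotient checks out.
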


\section{The Eisenstein series $E^*_k(z,s)$ at integer values of $s$} \label{integer}

In this section we prove Theorem \ref{main2}. The Bessel function $K_w(y)$ may be expressed as a rational function times the exponential function when $w \in \Z+1/2$ and the simple idea of the proof is to put this expression into Theorem \ref{allk2}.

\subsection{Raising $W_h(mz)$}
For the falling factorial, write $z^{\underline{n}} := z(z-1) \cdots (z-n+1)$
with $z \in \C$ and $n \in \Z_{\gqs 0}$ (and $z^{\underline{0}}=1$).  The generalized binomial coefficients are given by
\begin{equation}\label{bin}
\binom{z}{n}:=\begin{cases} z^{\underline{n}}/n! & \text{ \ if \ } n\in \Z_{\geqslant 0},\\
0  & \text{ \ if \ } n\in \Z_{< 0}.\end{cases}
\end{equation}
Recalling \e{hstar}, we see that
for $n,j \in \Z$
\begin{equation} \label{binb}
  \binom{n-1+j}{2j} \neq 0 \quad \text{if and only if} \quad 0 \lqs j \lqs n^*.
\end{equation}

\begin{lemma} \label{kb}
For all $n \in \Z$ and $y>0$,
\begin{equation}
   K_{n-1/2}(y)=\left(\frac{\pi}{2y}\right)^{1/2} e^{-y} \sum_{j= 0}^{n^*} \binom{n-1+j}{2j} \frac{(2j)!}{j! (2y)^j} . \label{kbess}
\end{equation}
\end{lemma}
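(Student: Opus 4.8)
The plan is to establish the closed form \e{kbess} for the half-integer-order Bessel function $K_{n-1/2}(y)$ directly from the integral representation \e{bessel}, treating the cases $n \gqs 1$ and $n \lqs 0$ separately (recalling $n^* = n-1$ in the former case and $n^* = -n$ in the latter). Since $K_w(y) = K_{-w}(y)$ and the right-hand side of \e{kbess} should be symmetric under $n \leftrightarrow 1-n$ (note $n^* = (1-n)^*$ and the binomial $\binom{n-1+j}{2j}$ is invariant under $n \mapsto 1-n$ after reindexing, since $\binom{n-1+j}{2j}$ and $\binom{-n+j}{2j}$ agree up to the $\neq 0$ criterion in \e{binb}), it suffices to prove the formula for, say, $n \gqs 1$ and then invoke this symmetry. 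So I would reduce to $n \gqs 1$.

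For $n \gqs 1$, the cleanest route is induction on $n$. The base case $n = 1$ (equivalently $n = 0$) asserts $K_{1/2}(y) = \sqrt{\pi/(2y)}\, e^{-y}$, which is the classical evaluation and follows from \e{bessel} with $w = 1/2$ by the substitution $t \mapsto 1/t$ symmetrization and a standard Gaussian-type integral, or can simply be quoted. For the inductive step I would use the three-term recurrence for modified Bessel functions, $K_{w+1}(y) = K_{w-1}(y) + \frac{2w}{y} K_w(y)$, which in the half-integer case reads $K_{n+1/2}(y) = K_{n-3/2}(y) + \frac{2n-1}{y} K_{n-1/2}(y)$. Feeding in the (assumed) formulas for $K_{n-1/2}$ and $K_{n-3/2}$, one must verify that the coefficient of $e^{-y} (\pi/2y)^{1/2} (2y)^{-j}$ on the right matches $\binom{n+j}{2j}\frac{(2j)!}{j!}$ on the left; this is a finite binomial identity comparing $\binom{n-2+j}{2j}$, $\binom{n-1+j}{2j}$ (with an index shift coming from the $1/y$ factor) and $\binom{n+j}{2j}$, and should collapse via Pascal-type manipulations. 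Alternatively — and perhaps more transparently — I could avoid the recurrence and instead expand $e^{-y(t+1/t)/2}$ cleverly: writing $K_{n-1/2}(y) = \tfrac12 \int_0^\infty e^{-y(t+1/t)/2} t^{n-3/2}\, dt$, substitute $t = e^{v}$ or complete to recognize a sum of incomplete-gamma / elementary integrals, but the induction is more self-contained and I would present that.

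The main obstacle I anticipate is purely bookkeeping: getting the binomial identity in the inductive step to close exactly, including the edge terms $j = 0$ and $j = n^*$ where the ranges of summation shift by one when passing from $n-1$ to $n$. One has to be careful that the recurrence introduces a term at $j = n^* = n-1$ that was absent in the $K_{n-3/2}$ sum (whose top index is $n-2$) and that the $j=0$ terms combine correctly. Using the characterization \e{binb} — that $\binom{n-1+j}{2j} \neq 0$ precisely for $0 \lqs j \lqs n^*$ — lets me extend all the sums to a common range (say $0 \lqs j \lqs n$) with vanishing tail terms, so the coefficient comparison becomes a single clean identity valid term-by-term, namely $\binom{n+j}{2j} = \binom{n-2+j}{2j} + (2n-1)\binom{n-1+j-1}{2j-2}\frac{(2j-2)!\, j!}{(2j)!\,(j-1)!}$ after normalizing the $(2j)!/j!$ factors; simplifying the rightmost coefficient to $\tfrac{2j(2j-1)}{2n-1}\cdots$ — wait, one should instead track $(2j)!/j!$ honestly — reduces everything to the Pascal-type relation for $\binom{n+j}{2j}$. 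Once that identity is checked, the induction and hence the lemma follow, and the $n \lqs 0$ case is immediate from $K_{n-1/2} = K_{1/2-n} = K_{(1-n)-1/2}$ with $1-n \gqs 1$.
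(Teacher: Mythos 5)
Your proposal is correct and follows essentially the same route as the paper: verify the classical base case $K_{1/2}(y)=\sqrt{\pi/(2y)}\,e^{-y}$, induct on $n\gqs 1$ via a standard Bessel recurrence, and extend to $n\lqs 0$ using $K_w=K_{-w}$ together with the invariance of the right-hand side under $n\to 1-n$. The only (immaterial) difference is that you use the three-term recurrence $K_{w+1}=K_{w-1}+\tfrac{2w}{y}K_w$ (which needs the two coincident base cases $n=0,1$ and closes via the identity $\binom{n+j}{2j}-\binom{n-2+j}{2j}=\tfrac{2n-1}{2j-1}\binom{n-2+j}{2j-2}$), whereas the paper uses the derivative recurrence $K_{w+1}(y)=\tfrac{w}{y}K_w(y)-K_w'(y)$.
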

\begin{proof}
The formula \e{kbess} is well-known for $n=1$. Use $K_{w+1}(y)=\frac wy K_w(y)-\frac{d}{dy} K_w(y)$ and induction to verify (\ref{kbess}) for all larger integer values of $n$. The identity $K_w(y)=K_{-w}(y)$ shows that the left side of \e{kbess}
does not change as $n \to 1-n$. It is elementary to check that the right side is also unchanged as $n \to 1-n$ and so \e{kbess} is  true for all $n \in \Z_{\lqs 0}$ as well.
\end{proof}

Recall the definition of $\mathcal A^k_h(u)$ in \e{defak2}.

\begin{prop} \label{long} For $k/2 \in \Z_{\gqs 0}$, $h \in \Z$ and $m\in \R_{\neq 0}$ we have
\begin{equation*}
  R^{k/2}_0 W_h(m z) = \begin{cases}
 \displaystyle e^{2\pi i m z} \sum_{u=0}^{h^* +k/2} \mathcal A^k_h(u) \cdot (4\pi |m|y)^{-u+k/2}
 & \text{ \ if \ } m> 0,\\
 \displaystyle  e^{2\pi i m \overline{z}} \sum_{u=0}^{h^*-k/2^{\phantom{\big |}}} \mathcal A^{-k}_h(u) \cdot (4\pi |m|y)^{-u-k/2}  & \text{ \ if \ } m < 0.
\end{cases}
\end{equation*}

\end{prop}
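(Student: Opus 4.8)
The plan is to combine the half-integer Bessel formula of Lemma~\ref{kb} with the raising-operator identity of Lemma~\ref{rkw}. First I would write, for $m>0$, the definition \e{whit} in the form $W_h(mz) = 2(my)^{1/2}K_{h-1/2}(2\pi my)e^{2\pi i m x}$ and substitute \e{kbess} with $y$ replaced by $2\pi my$; this expresses $W_h(mz)$ itself as $e^{2\pi i m z}$ times a finite Laurent polynomial in $4\pi my$, namely $W_h(mz) = e^{2\pi i m z}\sum_{j=0}^{h^*}\binom{h-1+j}{2j}\frac{(2j)!}{j!}(4\pi my)^{-j}$ (the $\sqrt{\pi/2}$ and $2\sqrt{my}$ factors cancel cleanly). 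So the $k=0$ case is just Lemma~\ref{kb} rewritten, and \e{binb} tells us the sum really does stop at $j=h^*$. The case $m<0$ follows from the same computation using $W_h(mz) = 2|my|^{1/2}K_{h-1/2}(2\pi|m|y)e^{2\pi imx}$ and $\overline{z}$; because $|m|=-m$ there, one gets $e^{2\pi i m\overline z}$ times the analogous polynomial in $4\pi|m|y$.

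Next I would apply $R_0^{k/2}$ to this expression. The point is that $R_0^{k/2}$ acts on functions of the shape $e^{2\pi i m z}\cdot(\text{polynomial in }y)$, and for $m>0$ one has $\partial_z(e^{2\pi imz}g(y)) = e^{2\pi imz}(2\pi i m\, g(y) + \tfrac12 g'(y)\cdot(-i))$... more cleanly: since $e^{2\pi imz}$ is holomorphic, $R_k(e^{2\pi imz}g(y)) = e^{2\pi imz}\bigl(2iy(2\pi im)g(y) + iy g'(y)\cdot i^{-1}?\bigr)$ — the right bookkeeping is that $R_k = 2iy\partial_z + k/2$ sends $e^{2\pi imz}y^a$ to $e^{2\pi imz}(-4\pi my + a + k/2)y^a$ when acting on the $z$-holomorphic exponential (using $2iy\,\partial_z e^{2\pi imz}=-4\pi my\,e^{2\pi imz}$ and $2iy\,\partial_z y^a = 2iy\cdot\tfrac{i}{2}\partial_y\,y^a$... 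I must be careful: $\partial_z=\tfrac12(\partial_x-i\partial_y)$, so on $y^a$ it gives $-\tfrac{i}{2}a y^{a-1}$, hence $2iy\,\partial_z y^a = a y^a$). The upshot is a clean three-term recursion: $R_k$ maps $e^{2\pi imz}(4\pi my)^{b}$ to $e^{2\pi imz}\bigl[(b+k/2)(4\pi my)^b - (4\pi my)^{b+1}\bigr]$. Iterating $k/2$ times turns the degree-range $\{-h^*,\dots,0\}$ of exponents of $(4\pi my)$ into the range $\{-h^*,\dots,k/2\}$, i.e. exponents $-u+k/2$ for $0\le u\le h^*+k/2$, matching the claimed shape. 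For $m<0$ the exponential $e^{2\pi im\overline z}$ is $\overline z$-holomorphic, so $\partial_z$ kills it and $R_k$ acts purely through $2iy\,\partial_z$ on the powers of $y$, which explains why the $m<0$ answer involves $\mathcal A^{-k}_h$ and a shift by $-k/2$ rather than $+k/2$: raising here lowers the top degree.

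The main obstacle is the bookkeeping needed to identify the resulting coefficients explicitly with $\mathcal A^k_h(u)$ as defined in \e{defak2}, i.e. with $(-1)^{u+k/2}u!\binom{k/2+h-1}{u}\binom{k/2-h}{u}$ for $m>0$. One route is a direct induction on $k/2$: assume the formula for $R_0^{k/2}W_h(mz)$, apply one more $R_k$ using the three-term recursion above, collect the coefficient of $(4\pi my)^{-u+(k/2+1)}$, and check it equals $\mathcal A^{k+2}_h(u)$; this reduces to a Pascal-type identity among the binomial coefficients $\binom{k/2+h-1}{u}\binom{k/2-h}{u}$, which should follow from standard contiguous relations for binomial coefficients together with the absorption identity $u\binom{n}{u}=n\binom{n-1}{u-1}$. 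A cleaner alternative — and the one I would actually attempt — is to recognize, via the Laguerre description \e{lag} of $P^n_r$, that $R_0^{k/2}W_h(mz)$ as produced by Lemma~\ref{rkw} is a sum $2^{-k}\sum_r(\pm1)^r P^{k/2}_r(4\pi my)W_{h+r}(mz)$, then substitute \e{kbess} into each $W_{h+r}(mz)$ and interchange the order of summation; the inner sum over $r$ of products of a Laguerre-type polynomial against $\binom{h+r-1+j}{2j}$-type coefficients collapses, by a known Laguerre summation identity, to a single binomial product. I expect the first (inductive) route to be the safer bet for a fully rigorous write-up, with the Laguerre route relegated to a remark; either way the essential content is already packaged in Lemmas~\ref{rkw} and~\ref{kb}, and what remains is a finite, if intricate, binomial-coefficient verification.
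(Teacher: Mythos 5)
Your plan is correct, and the primary (inductive) route you favour is genuinely different from the proof the paper gives for Proposition~\ref{long}. The paper raises first and expands second: it starts from Lemma~\ref{rkw}, substitutes \e{kbess} into each $W_{h+r}(mz)$, and then collapses the resulting triple sum via auxiliary quantities $\alpha_k(h,v;\delta)$, $\beta(h,v,j;\delta)$ and four binomial-sum identities --- that is exactly your ``Laguerre route'', which you relegate to a remark. You instead expand first and raise second: Lemma~\ref{kb} turns $W_h(mz)$ into $e^{2\pi i m z}$ times a Laurent polynomial in $4\pi m y$ (your prefactor cancellation is right), and the two-term action $R_k\colon e^{2\pi i m z}(4\pi m y)^b \mapsto e^{2\pi i m z}\bigl[(b+k/2)(4\pi m y)^b-(4\pi m y)^{b+1}\bigr]$ reduces everything to the single recurrence $\mathcal A^{k+2}_h(u)=(k+1-u)\,\mathcal A^k_h(u-1)-\mathcal A^k_h(u)$, which does hold: with $a=k/2+h-1$, $b=k/2-h$ it is equivalent to $u!\binom{a+1}{u}\binom{b+1}{u}=u!\binom{a}{u}\binom{b}{u}+(a+b+2-u)(u-1)!\binom{a}{u-1}\binom{b}{u-1}$, an immediate consequence of Pascal plus the absorption identity, so your ``intricate verification'' is in fact a one-line check. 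For $m<0$ the situation is even cleaner than you describe: since $\partial_z$ kills $e^{2\pi i m \overline z}$, $R_k$ never shifts exponents there but only multiplies the coefficient of $(4\pi|m|y)^b$ by $b+k/2$; after $k/2$ steps each coefficient acquires the factor $b(b+1)\cdots(b+k/2-1)$, which annihilates exactly the terms with $-k/2<b\lqs 0$ and yields $\mathcal A^{-k}_h(u)$ with no summation at all. The paper itself acknowledges your ordering as an aside at the end of Section~\ref{torus} (it is the proof sketched in \cite{DO10}); its reason for preferring the Lemma~\ref{rkw}-first order is that Lemma~\ref{rkw} is needed anyway for the general-$s$ expansion in Theorem~\ref{allk2}, of which Proposition~\ref{long} then becomes the integer-$s$ specialization, whereas your argument produces only the integer case. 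The remaining items for a full write-up are the base case (identifying $\binom{h-1+j}{2j}(2j)!/j!$ with $\mathcal A^0_h(j)$ for all $h\in\Z$, using the $n\to 1-n$ symmetry of \e{kbess} when $h\lqs 0$) and the recurrence above; both are routine.
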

\begin{proof}
Put $\delta := m/|m|$. By Lemmas \ref{rkw} and \ref{kb} we have
\begin{multline*}
  R^{k/2}_0 W_h(m z) = 2^{-k} \sum_{|r| \leqslant k/2} \delta^r P^{k/2}_r(4 \pi m y) W_{h+r}(mz) \\
    = \frac{e^{2\pi i m x- 2\pi |m|y}}{2^k} \sum_{|r| \leqslant k/2} \delta^r \sum_{\ell=|r|}^{k/2}\frac{k! (-4 \pi my)^\ell}{(k/2-\ell)! (\ell+r)! (\ell-r)!}
   \sum_{j=0}^{(h+r)^*} \binom{h+r-1+j}{2j}\frac{(2j)!}{j! (4\pi |m| y)^j}.
\end{multline*}
Interchanging the order of summation and writing $v=j-\ell$ gives
\begin{equation}\label{ab}
    R^{k/2}_0 W_h(m z)= \frac{k!}{2^k} \ e^{2\pi i m x- 2\pi |m|y} \sum_{v=-k/2}^{h^*} (-\delta)^{v} \frac{\alpha_k(h,v;\delta)}{(4 \pi |m|y)^v}
\end{equation}
for
\begin{align}
  \alpha_k(h,v;\delta) & := \sum_{j=\max(0,v)}^{k/2+v} (-\delta)^{j} \frac{(2j)!}{j! (k/2+v-j)!} \beta(h,v,j;\delta), \label{the}\\
  \beta(h,v,j;\delta) & := \sum_{|r| \leqslant j-v}  \binom{h+r-1+j}{2j} \frac{\delta^r}{(j-v+r)!(j-v-r)!}. \label{ps}
\end{align}
The sum in \e{ps} is initially over all $r$ such that $|r| \leqslant j-v$ and $j \leqslant (h+r)^*$. With \e{binb} the condition $j \leqslant (h+r)^*$ may be removed.

To simplify these formulas we first assemble the combinatorial results we shall need.

\begin{lemma} For all $a,b,c \in \Z$ with $a \geqslant 0$, we have
\begin{align}
  \sum_{\ell=0}^a \binom{a}{\ell} \binom{b+\ell}{c} &= \sum_{i=0}^a \binom{a}{i} \binom{b}{c-i} 2^{a-i}, \label{comb1}\\
  \sum_{\ell=0}^a  \binom{a}{\ell} \binom{b}{c-\ell} &= \binom{a+b}{c}, \label{comb4}\\
   \sum_{\ell=0}^a (-1)^\ell \binom{a}{\ell} \binom{b+\ell}{c} &= (-1)^a \binom{b}{c-a}, \label{comb2}\\
   \sum_{\ell=0}^a (-1)^\ell \binom{a}{\ell} \binom{2\ell}{c} &= (-1)^a \binom{a}{c-a}2^{2a-c}. \label{comb3}
\end{align}
\end{lemma}
\begin{proof}
 Recall that the general binomial theorem implies
$$
(1+x)^n=\sum_{\ell=0}^\infty \binom{n}{\ell} x^\ell
$$
for all $x$ when $n\in \Z_{\gqs 0}$ and for all $x$ with $|x|<1$ when $n \in \Z_{<0}$. To prove (\ref{comb1}) we evaluate in two ways $(1+1+x)^a(1+x)^b$.
For $|x|<1$,
\begin{equation}\label{l1}
(1+(1+x))^a(1+x)^b = \sum_{\ell=0}^a \binom{a}{\ell} (1+x)^{b+\ell} = \sum_{\ell=0}^a\sum_{c=0}^\infty \binom{a}{\ell}\binom{b+\ell}{c}x^c
\end{equation}
and also
\begin{align}
(2+x)^a(1+x)^b & = 2^a (1+x/2)^a(1+x)^b \notag \\
& = 2^a \sum_{i=0}^a \sum_{j=0}^\infty \binom{a}{i}\binom{b}{j} (x/2)^i x^j = \sum_{i=0}^a \sum_{c=0}^\infty \binom{a}{i}\binom{b}{c-i} 2^{a-i} x^c. \label{l2}
\end{align}
Comparing coefficients of $x^c$ in (\ref{l1}), (\ref{l2}) yields (\ref{comb1}). Similar proofs using the elementary identities
\begin{align*}
  (1+x)^a(1+x)^b & =  (1+x)^{a+b}, \\
  (1-(1+x))^a(1+x)^b & =  (-x)^a (1+x)^b, \\
  (1-(1+x)^2)^a & =  (-x)^a(2+x)^a
\end{align*}
 give (\ref{comb4}), (\ref{comb2}) and (\ref{comb3}) respectively.
\end{proof}

In the following two lemmas the integers $k,$ $h,$ $v$ and $j$ are restricted to the ranges required in \e{ab} and \e{the}. Explicitly: $k/2 \in \Z_{\gqs 0}$,  $-k/2 \lqs v \lqs h^*$ and $\max(0,v) \lqs j \lqs k/2+v$.
\begin{lemma} We have
\begin{align}
  \beta(h,v,j;-1) &= \frac{(-1)^{j+v} }{(2j-2v)!} \binom{h+v-1}{2v}, \label{psi-1}\\
  \beta(h,v,j;1) &= \sum_{\ell=0}^{2j-2v}  \frac{2^\ell }{\ell! (2j-2v-\ell)!} \binom{h+v-1}{2v+\ell}. \label{psi1}
\end{align}
\end{lemma}
\begin{proof}
Put $t=r+j-v$ to get
$$
\beta(h,v,j;\delta)= \frac{\delta^{j+v} }{(2j-2v)!} \sum_{t=0}^{2j-2v} \delta^t \binom{2j-2v}{t} \binom{h+v-1+t}{2j}.
$$
Apply (\ref{comb1}) and (\ref{comb2}) to complete the proof.
\end{proof}
\begin{lemma} \label{thetaa}
We have $\alpha_k(h,v;-1)=0$ unless $k/2\lqs v \lqs h^*$. Also
\begin{align}\label{alx}
  \alpha_k(h,v;-1) & = (-1)^{k/2} \frac{2^k}{k!} \frac{(h+v-1)^{\underline{2v}}}{(v-k/2)!} \qquad \text{for} \qquad k/2\lqs v \lqs h^*, \\
 \alpha_k(h,v;1) & = (-1)^{k/2+v} \frac{2^k}{k!} (h+v-1)^{\underline{v+k/2}} \binom{h+k/2-1}{v+k/2}. \label{alx2}
\end{align}
\end{lemma}
\begin{proof}
Put \e{psi-1} into \e{the} and use \e{binb}, \e{comb3} to simplify and obtain the stated formulas for $\alpha_k(h,v;-1)$.
Put (\ref{psi1}) into (\ref{the}) to get
\begin{align*}
  \alpha_k(h,v;1) &=  \frac{1}{(k/2+v)!} \sum_{j=0}^{k/2+v} (-1)^j \binom{k/2+v}{j} \sum_{\ell=0}^{2j-2v}  \frac{2^\ell (2j)!}{\ell! (2j-2v-\ell)!} \binom{h+v-1}{2v+\ell}\\
   &=  \frac{1}{(k/2+v)!} \sum_{\ell=0}^{k} \frac{2^{\ell}}{\ell!} \binom{h+v-1}{2v+\ell} (2v+\ell)!
   \sum_{j=v+\lceil \ell/2 \rceil}^{v+k/2} (-1)^j \binom{k/2+v}{j}   \binom{2j}{2v+\ell}.
\end{align*}
Using (\ref{comb3}) to find the inner sum produces
\begin{equation}\label{the2}
\alpha_k(h,v;1) = (-1)^{k/2+v} 2^k \sum_{\ell=0}^{k} \frac{(h+v-1)^{\underline{2v+\ell}}}{(v-k/2+\ell)! \ (k-\ell)! \ \ell!}.
\end{equation}
Breaking up the numerator as
$$
(h+v-1)^{\underline{2v+\ell}} = (h+v-1)^{\underline{v+k/2}} \times (h-k/2-1)^{\underline{v-k/2+\ell}},
$$
shows that \e{the2} may be rewritten as
\begin{equation*}
\alpha_k(h,v;1)  =   (-1)^{k/2+v} \frac{2^k}{k!}(h+v-1)^{\underline{v+k/2}} \sum_{\ell} \binom{h-k/2-1}{v+k/2-\ell} \binom{k}{\ell}
\end{equation*}
and an application of \e{comb4} is the last step.
\end{proof}

We may now complete the proof of Proposition \ref{long} by
using Lemma \ref{thetaa} in \e{ab}. For $m>0$ we obtain
\begin{equation*}
  R^{k/2}_0 W_h(m z)=  e^{2\pi i m z} \sum_{v=-k/2}^{h^*} (-1)^{k/2} (h+v-1)^{\underline{v+k/2}} \binom{h+k/2-1}{v+k/2} (4 \pi |m|y)^{-v}.
\end{equation*}
Letting $u=v-k/2$ this becomes
\begin{equation*}
  R^{k/2}_0 W_h(m z)=  e^{2\pi i m z} \sum_{u=0}^{h^*+k/2} (-1)^{k/2} u! \binom{h-k/2-1+u}{u} \binom{h+k/2-1}{u} (4 \pi |m|y)^{-u+k/2}.
\end{equation*}
Applying the identity
\begin{equation} \label{binneg}
  \binom{-z}{u} = (-1)^u \binom{z+u-1}{u}
\end{equation}
to the first binomial coefficient produces the desired formula.

The case when $m<0$ is similar. By Lemma \ref{thetaa}, the summands in \e{ab} can only be non-zero for $k/2 \lqs v \lqs h^*$ and so we use the new index $u=v+k/2$.
\end{proof}

\subsection{The $\mathcal A$ coefficients}

\begin{prop} \label{aprop} For all $k/2, h \in \Z$ and $u \in \Z_{\gqs 0}$   the following are true.
\begin{enumerate}
\item $\mathcal A^k_h(u) \in \Z.$
\item $\mathcal A^k_{1-h}(u)=\mathcal A^k_h(u)$.
\item We have the representation
\begin{equation}\label{aaak}
\mathcal A^k_h(u) = \frac{(-1)^{k/2}}{u!} \prod^{\max(0,k/2)}_{\ell=k/2+1-u} \left[(h-1/2)^2-(\ell-1/2)^2 \right].
\end{equation}
The product in \e{aaak} is empty when $u=0$ and $k\gqs 0$. In that case the product should be interpreted as $1$ and so $\mathcal A^k_h(0) = (-1)^{k/2}$ for $k\gqs 0$.
\item For  $h^* < k/2$ we have $\mathcal A^k_h(u) \neq 0$ if and only if  \ $0 \leqslant u \leqslant k/2-1- h^*$.
\item For  $h^* \geqslant k/2$ we have $\mathcal A^k_h(u) \neq 0$ if and only if \  $0 \leqslant u \leqslant k/2+ h^*$.
\end{enumerate}
\end{prop}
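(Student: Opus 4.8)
The plan is to take the two definitions of $\mathcal A^k_h(u)$ in \e{defak2} — one for $k \gqs 0$ and one for $k \lqs 0$ — and convert each into the symmetric product form \e{aaak}, from which parts (i), (ii), (iv), (v) follow almost immediately. I would start with part (iii), the product representation, since it is the structural heart of the proposition; the other parts are corollaries.

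For $k \gqs 0$, I would begin with $\mathcal A^k_h(u) = (-1)^{u+k/2} u! \binom{k/2+h-1}{u}\binom{k/2-h}{u}$ and expand the two binomial coefficients as falling factorials: $\binom{k/2+h-1}{u} = (k/2+h-1)^{\underline{u}}/u!$ and $\binom{k/2-h}{u} = (k/2-h)^{\underline{u}}/u!$. Multiplying out, $\mathcal A^k_h(u) = \frac{(-1)^{u+k/2}}{u!}\prod_{i=0}^{u-1}(k/2+h-1-i)(k/2-h-i)$. Now reindex with $\ell = k/2 - i$, so $\ell$ runs from $k/2$ down to $k/2+1-u$; then $k/2+h-1-i = \ell + h - 1 = (h-1/2)+(\ell - 1/2)$ and $k/2-h-i = \ell - h = -\bigl((h-1/2)-(\ell-1/2)\bigr)$, whose product is $-\bigl[(h-1/2)^2 - (\ell-1/2)^2\bigr]$. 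The $u$ factors of $-1$ cancel the $(-1)^u$, leaving exactly \e{aaak} with the product running $\ell = k/2+1-u$ to $k/2 = \max(0,k/2)$. For $k \lqs 0$, I would do the analogous expansion starting from $\mathcal A^k_h(u) = (-1)^u \frac{[(u-k/2)!]^2}{u!}\binom{h-1}{u-k/2}\binom{-h}{u-k/2}$: write the binomials as falling factorials of length $u-k/2$, so $\frac{[(u-k/2)!]^2}{u!}\binom{h-1}{u-k/2}\binom{-h}{u-k/2} = \frac{1}{u!}(h-1)^{\underline{u-k/2}}(-h)^{\underline{u-k/2}}$. Applying \e{binneg}-style manipulation (or directly $(-h)^{\underline{j}} = (-1)^j h^{\overline{j}}$) and pairing the $i$-th factors gives $(h-1-i)(-h-i) = -\bigl[(h-1/2)^2 - (i+1/2)^2\bigr]$ for $i = 0,\dots,u-k/2-1$; reindexing $\ell = -i$ sends this to $\ell$ running from $0$ down to $k/2+1-u$, matching $\max(0,k/2)=0$ and giving \e{aaak} again, with the $(-1)^{u-k/2}$ from the $(u-k/2)$ sign factors combining with the leading $(-1)^u$ to produce $(-1)^{k/2}$. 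Checking the empty-product conventions and the stray case $u=0$, $k\gqs 0$ is a quick bookkeeping step.

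Given \e{aaak}, the remaining parts are short. Part (ii): each factor $(h-1/2)^2 - (\ell-1/2)^2$ is invariant under $h \mapsto 1-h$ since $(1-h)-1/2 = -(h-1/2)$, and the product range does not involve $h$, so $\mathcal A^k_{1-h}(u) = \mathcal A^k_h(u)$. Part (i): integrality is clearest from the original binomial formula \e{defak2} once one observes $u!\binom{a}{u}\binom{b}{u}$ is an integer (e.g.\ $u!\binom{a}{u} = a^{\underline{u}} \in \Z$, times $\binom{b}{u} \in \Z$) for $k\gqs 0$, and similarly $\frac{[(u-k/2)!]^2}{u!}\binom{\cdot}{u-k/2}\binom{\cdot}{u-k/2}$ is an integer for $k\lqs 0$ since $\binom{2(u-k/2)}{u-k/2}\big/\binom{u}{u-k/2}$-type cancellation or, more simply, $(u-k/2)!\binom{h-1}{u-k/2}$ and $(u-k/2)!\binom{-h}{u-k/2}$ are each integers and their product is divisible by $u!$ because it equals $\pm\,u!\,\mathcal A^k_h(u)$ with the latter rational — so one should instead argue integrality directly from \e{aaak}: the product of $u$ consecutive-ish integer factors $(\ell+h-1)(\ell-h)$ divided by $u!$; this is $\binom{k/2+h-1}{u}$ times a falling factorial up to sign, hence integral. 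I will present whichever of these is cleanest. Parts (iv) and (v): a factor of \e{aaak} vanishes iff $(h-1/2)^2 = (\ell-1/2)^2$, i.e.\ $\ell - 1/2 = \pm(h-1/2)$, i.e.\ $\ell = h$ or $\ell = 1-h$; equivalently, recalling \e{hstar}, exactly when $\ell = h^* + 1$ or $\ell = -h^*$ ... more precisely, $|\ell - 1/2| = |h-1/2|$, which by the definition $h^* = |h-1/2|-1/2$ means $\ell \in \{h^*+1, -h^*\}$. So $\mathcal A^k_h(u) \neq 0$ iff the range $\{k/2+1-u, \dots, \max(0,k/2)\}$ avoids both $h^*+1$ and $-h^*$. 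Translating this avoidance condition into the inequalities stated, splitting on the sign of $h^* - k/2$ exactly as in the figure, gives (iv) and (v); I will also cross-check against the "reduced upper bound" claims in Theorem \ref{main2}, which must be consistent.

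The main obstacle I anticipate is purely organizational rather than deep: keeping the reindexing bookkeeping (the substitutions $\ell = k/2 - i$ for $k\gqs 0$ versus $\ell = -i$ for $k\lqs 0$, and the empty-product / off-by-one conventions) aligned so that both sign computations land on the single unified formula \e{aaak} with the correct overall $(-1)^{k/2}$, and so that the degenerate cases ($u=0$; $k=0$; the product range collapsing) are handled without a stray sign or factor. Once \e{aaak} is in hand, the vanishing analysis in (iv)–(v) is the only place requiring a little care, since one must correctly identify which of $h^*+1$ and $-h^*$ can actually fall inside the product range for each sign of $k$ — but this is a finite case check guided directly by Figure \ref{bfig}.
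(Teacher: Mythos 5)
Your proposal is correct and follows essentially the same route as the paper: expand the binomial coefficients in \e{defak2} as falling factorials, reindex to obtain the difference-of-squares product \e{aaak}, and then read off (i), (ii), (iv), (v) by noting that a factor vanishes exactly when $\ell\in\{h^{*}+1,-h^{*}\}$ and splitting on the sign of $h^{*}-k/2$. The only point needing care in the write-up is the integrality argument for $k\lqs 0$ (your first two attempts are circular or incomplete, as you note), but the final route via extracting a binomial coefficient from the product of consecutive integer factors in \e{aaak} does work.
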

\begin{proof}
Parts (i) and (ii) follow directly from the definition \e{defak2}.  Also from the definition, for $k \geqslant 0$,
\begin{eqnarray*}
  \mathcal A^k_h(u) &=& \frac{(-1)^{k/2}}{u!} \prod^{u-1}_{r=0} (k/2+h-1-r)(h-k/2+r) \\
  &=& \frac{(-1)^{k/2}}{u!} \prod^{u-1}_{r=0} \left[(h-1/2)-(k/2-r-1/2) \right] \left[(h-1/2)+(k/2-r-1/2) \right] \\
   &=& \frac{(-1)^{k/2}}{u!} \prod^{k/2}_{\ell=k/2+1-u} \left[(h-1/2)^2-(\ell-1/2)^2 \right].
\end{eqnarray*}
Similarly for $k<0$ and part (iii) is verified. To prove parts (iv) and (v), note that (iii) implies that $\mathcal A^k_h(u) =0$ if and only if there exists an $\ell$ in the range $k/2+1-u \leqslant \ell \leqslant \max(0,k/2)$ with $h^* = \ell^*$.

Suppose $h^* < k/2$. This implies that $k>0$. If $k/2+1-u \leqslant h^*+1$ then there exists an $\ell$ with $h^* = \ell^*$, namely $\ell=h^*+1$, and so $\mathcal A^k_h(u) =0$. But if $k/2+1-u > h^*+1$ then $h^* = \ell^*$ is not possible and so $\mathcal A^k_h(u) \neq 0$. This proves part (iv).

Suppose $h^* \gqs k/2$. In this case it is impossible to have $h^* = \ell^*$ for $\ell\gqs 0$. Thus $\mathcal A^k_h(u) =0$ is only possible if we have an $\ell$ equalling $-h^*$. This happens when $k/2+1-u \leqslant -h^*$. Hence $\mathcal A^k_h(u) \neq 0$ if and only if $k/2+1-u > -h^*$, proving part (v).
\end{proof}

As well as \e{defak2} and \e{aaak}, a compact representation of $\mathcal A^k_h(u)$ is given in \cite[(3.1)]{DO10}:
\begin{equation}\label{defak2b}
\mathcal A^k_h(u) = \frac{(-1)^{k/2}}{u!}  \frac{\G(h-k/2+u)}{\G(h-k/2)} \frac{\G(h+|k|/2)}{\G(h+k/2-u)}.
\end{equation}
Note that if any of the $\G$ function arguments above are in $\Z_{\lqs 0}$ then the limiting values of the quotients are meant.

\begin{proof}[Proof of Theorem \ref{main2}]
Using Proposition \ref{long} in \e{e0kk} gives \e{mnk2} for even $k \gqs 0$. Conjugate both sides and use \e{connj} to obtain \e{mnk2} for $k$ negative. With parts (iv) and (v) of Proposition \ref{aprop} we see when the range of the indices may be reduced.
\end{proof}

In \e{whyt} we have already seen a connection between  Whittaker functions and the Laguerre polynomials that we defined  in \e{lag}.
A second relation is the identity
\begin{equation}\label{whi}
  W_{(\alpha+1)/2+n,\alpha/2}(y)=(-1)^n n! \, e^{-y/2}y^{(\alpha+1)/2} L^{(\alpha)}_n(y) \qquad \qquad (n \in \Z_{\gqs 0})
\end{equation}
  from \cite[(13.18.17)]{DLMF}.
Substituting \e{whi} into Maass's formula \e{prb2} provides another route to some cases of Theorem \ref{main2}.
  If \e{whi} is valid for all $\alpha \in \Z$ then it may be verified that we obtain Theorem \ref{main2} in this way when $h^* \gqs |k|/2$, corresponding to Eisenstein series in the left and right triangles in Figure \ref{bfig}.
A referee kindly provided the following proof of \e{whi}, valid for all $\alpha \in \R$ and  $y>0$. From the definition in \cite[Sect. 16.1]{WW},
\begin{equation} \label{wwww}
  W_{(\alpha+1)/2+n,\alpha/2}(y)= - n! \, e^{-y/2} y^{(\alpha+1)/2+n} \frac{1}{2\pi i} \int_{\infty}^{(0+)}
  (-t)^{-n-1}(1+t/y)^{n+\alpha} e^{-t}\,dt
\end{equation}
provided $-y$ is outside the contour of integration. Under the change of variables $t=uy/(1-u)$, the right side of \e{wwww} becomes
\begin{equation}\label{ww99}
   (-1)^n n! \, e^{-y/2} y^{(\alpha+1)/2} \frac{1}{2\pi i} \oint_{|u|=\rho<1}
  u^{-n-1}(1-u)^{-\alpha-1} e^{-uy/(1-u)}\,du.
\end{equation}
The well-known generating function for the Laguerre polynomials is
\begin{equation*}
  \sum_{n=0}^\infty L^{(\alpha)}_n(y) \cdot u^n = (1-u)^{-\alpha-1} e^{-uy/(1-u)}
\end{equation*}
so that \e{ww99} equals the right side of \e{whi} by Cauchy's Theorem.


\section{Values of the spectral zeta function of a torus} \label{torus}
In the following sections we give some applications of Theorems \ref{allk2} and \ref{main2}. This section provides a direct proof of an identity in \cite{CJK10} and we give some of its context here next.

For a $d$-tuple of positive integers $N=(n_1,\dots,n_d)$, the Cayley graph of the product of cyclic groups $\Z/n_1\Z \times \cdots \times \Z/n_d\Z$ may be thought of as a $d$-dimensional discrete torus. The set of eigenvalues of the combinatorial Laplacian acting on this torus is
\begin{equation*}
  \Lambda_N=\Big\{2d-2\cos(2\pi k_1/n_1)- \cdots - 2\cos(2\pi k_d/n_d) \ \Big | \ 0\lqs k_j < n_j, \ 1\lqs j \lqs d \Big\}
\end{equation*}
and the associated spectral zeta function is
\begin{equation*}
 \zeta_N(s):= \sum_{0 \neq \lambda \in \Lambda_N} \frac 1{\lambda^s}.
\end{equation*}
As the components of $N$ go to infinity at comparable rates, it is shown by Chinta,  Jorgenson and Karlsson in \cite{CJK10} that $\zeta_N(s)$ approaches the spectral zeta function of the corresponding real torus; see \cite[Theorem, p. 125]{CJK10}. Combining their asymptotic results with work of Duplantier and David, they obtain in \cite[Eq. (11)]{CJK10} for  the case $d=2$ and $s=2$ the following formula
\begin{multline} \label{cjk}
  \sum_{\substack{m,n \in \Z \\(m,n)\neq (0,0)}} \frac{1}{((my)^2 +n^2)^2} = \left(\frac{2\pi}{y}\right)^2 \left(\frac{(2\pi y)^2}{2^4\cdot 45} + \frac{\zeta(3)}{4\pi y} \phantom{\sum_{n=1}^\infty} \right.\\
  \left. + \frac{1}{2\pi y}\sum_{n=1}^\infty \frac{1}{n^3}\frac{e^{-2\pi n y}}{1-e^{-2\pi n y}}
+ \sum_{n=1}^\infty \frac{1}{n^2}\frac{e^{-2\pi n y}}{(1-e^{-2\pi n y})^2}\right),
\end{multline}
with $y>0$. See \cite[Sections 1.3, 7.3, 7.4]{CJK10} for the details.

We may use Theorem \ref{main2} to give a direct proof of \e{cjk} and also extend it to higher integer values of $s$.
The left side of \e{cjk} may be expressed in
 general, for $\Re(s)>1$, as
\begin{align}
  \sum_{\substack{m,n \in \Z \\(m,n)\neq (0,0)}} \frac{1}{((my)^2 +n^2)^s} & = \sum_{\ell=1}^\infty \sum_{\substack{c,d \in \Z \\ \gcd(c,d)=1}} \frac{1}{(\ell^2 (cy)^2+\ell^2 d^2)^s} \notag\\
   & = \zeta(2s) \sum_{\substack{c,d \in \Z \\ \gcd(c,d)=1}} \frac{1}{|c i y +d|^{2s}} \notag\\
& = \frac{2 \zeta(2s)}{y^s} \sum_{\g \in \G_\ci \backslash \G} \Im(\g(iy))^s = \frac{2}{\G(s)} \left(\frac{\pi}{y}\right)^s E_0^*(iy,s). \label{chin}
\end{align}
With $k=0$ and $h\in \Z-\{0,1\}$, Theorem \ref{main2} gives
\begin{equation} \label{e00}
  E_0^*(z,h) = \theta_0(h) y^h+\theta_0(1-h) y^{1-h} +
   \sum_{m =1}^\infty \frac{ \sigma_{2h-1}(m)}{m^{h}} \left( e^{2\pi i m z}+ e^{-2\pi i m \overline z}\right) \sum_{u=0}^{h^*}  \mathcal A^0_h(u) \cdot (4\pi m y)^{-u}
\end{equation}
for
\begin{equation*}
   \mathcal A^0_h(u) = (-1)^u u! \binom{h-1}{u}\binom{-h}{u} = u!\binom{h-1}{u}\binom{h-1+u}{u}.
\end{equation*}
Also $\theta_0(h)$, $\theta_0(1-h)$ may be found with \e{thet} and so we obtain the weight $k=0$ case of Theorem \ref{main2} when $h\in \Z_{\gqs 2}$:
\begin{multline}\label{cas0}
  E_0^*(z,h) = (h-1)!\zeta(2h)\frac{y^h}{\pi^h} +\frac{(2h-2)!}{(h-1)!}\zeta(2h-1) (4\pi y)^{1-h}  \\
  +
   \sum_{m =1}^\infty \frac{ \sigma_{2h-1}(m)}{m^{h}} \left( e^{2\pi i m z}+ e^{-2\pi i m \overline z}\right) \sum_{u=0}^{h-1}  \frac{(h-1+u)!}{(h-1-u)!u!}  (4\pi m y)^{-u}.
\end{multline}
Note that \e{cas0} agrees with \cite[Eq. (1.13)]{DD}.

To get \e{e00} or \e{cas0} to match \e{cjk}, let  $z=iy$ and substitute $\sigma_{1-2h}(m)/m^{1-h}$ for $ \sigma_{2h-1}(m)/m^{h}$ to find
\begin{equation} \label{omn}
  E_0^*(iy,h) = \theta_0(h) y^h+\theta_0(1-h) y^{1-h} +
   2\sum_{m =1}^\infty \frac{ \sigma_{1-2h}(m)}{m^{1-h}} \ e^{-2\pi  m y} \sum_{u=0}^{h-1}  \mathcal A^0_h(u) \cdot (4\pi m y)^{-u}
\end{equation}
for $h\in \Z_{\gqs 2}$. Reorder
the right of \e{omn} as
\begin{equation} \label{omn2}
  2\sum_{u=0}^{h-1}  \mathcal A^0_h(u) \cdot (4\pi  y)^{-u} \sum_{m =1}^\infty \Bigl( \sum_{d | m} d^{1-2h}\Bigr)  m^{h-1-u} \ e^{-2\pi  m y}
\end{equation}
and letting $m=d \ell$ shows that \e{omn2} becomes
\begin{equation} \label{omn3}
  2\sum_{u=0}^{h-1}  \mathcal A^0_h(u) \cdot (4\pi  y)^{-u} \sum_{d =1}^\infty  d^{-h-u} \sum_{\ell =1}^\infty   \ell^{h-1-u} \ e^{-2\pi  d\ell y}.
\end{equation}
The last series in \e{omn3} may be expressed in terms of the polylogarithm $\pl_{s}(z):=\sum_{n=1}^\infty z^n/n^s$. For indices
 $-m \in \Z_{\lqs 0}$, Euler showed that the polylogarithm is in fact a rational function of the form
\begin{equation} \label{fro}
    \pl_{-m}(z) = \frac{z \cdot A_m(z)}{(1-z)^{m+1}}.
\end{equation}
The first two Eulerian polynomials are constant: $ A_0(z)=1$, $A_1(z)=1$. The next three are
\begin{equation*}
   A_2(z)=1+z,  \quad A_3(z)=1+4z+z^2, \quad A_4(z)=1+11z+11z^2+z^3.
\end{equation*}
Frobenius  showed that
\begin{equation*}
    A_m(z) = \sum_{k=0}^m k! \stirb{m}{k} (z-1)^{m-k} \qquad \quad (m \in \Z_{\gqs 0})
\end{equation*}
with the Stirling number $\stirb{m}{k}$   indicating the number of partitions of  $m$ elements into $k$ non-empty subsets. See for example \cite[Sect. 8.2]{O16}  for more information on these topics.

Assembling our calculations proves the next result, of which \e{cjk} is a special case.

\begin{prop}
For all $h \in \Z_{\gqs 2}$ and $y>0$,
\begin{multline*}
  \sum_{\substack{m,n \in \Z \\(m,n)\neq (0,0)}} \frac{1}{((my)^2 +n^2)^h} =
2\zeta(2h)
+2\pi \zeta(2h-1) \binom{2h-2}{h-1}(4y)^{1-h}\\
+
 \sum_{u=0}^{h-1} \binom{h-1+u}{u} \frac{4^{1-u}\pi^{h-u}}{(h-1-u)!}
\sum_{n=0}^\infty \frac{1}{(ny)^{h+u}} \frac{e^{-2\pi n y}A_{h-1-u}(e^{-2\pi n y})}{(1-e^{-2\pi n y})^{h-u}}.
\end{multline*}
\end{prop}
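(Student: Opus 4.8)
The statement follows by chaining together the identities already established in Section~\ref{torus}, so the plan is to carry them out in order while tracking the constants. First I would apply \eqref{chin}, which for $\Re(s)>1$ reads $\sum_{(m,n)\neq(0,0)} ((my)^2+n^2)^{-s} = \tfrac{2}{\Gamma(s)}(\pi/y)^{s} E_0^*(iy,s)$, and specialize $s=h$ for $h\in\Z_{\gqs 2}$. Into this I would substitute the Fourier expansion \eqref{omn} of $E_0^*(iy,h)$, which is the $k=0$ case of Theorem~\ref{main2}. The constant term $\theta_0(h)y^h+\theta_0(1-h)y^{1-h}$ is computed from \eqref{thet}: the value $\theta_0(h)=\pi^{-h}(h-1)!\,\zeta(2h)$, after multiplication by the prefactor $\tfrac{2}{\Gamma(h)}(\pi/y)^h=\tfrac{2\pi^h}{(h-1)!\,y^h}$, contributes the term $2\zeta(2h)$, while $\theta_0(1-h)=(4\pi)^{1-h}\tfrac{(2h-2)!}{(h-1)!}\zeta(2h-1)$ contributes the $\zeta(2h-1)$ term together with the binomial coefficient $\binom{2h-2}{h-1}=\tfrac{(2h-2)!}{((h-1)!)^2}$.

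Next I would reorganize the double sum over $m$ and $u$ in \eqref{omn}. Expanding $\sigma_{1-2h}(m)=\sum_{d\mid m} d^{1-2h}$ and writing $m=d\ell$ --- a rearrangement justified by the absolute convergence of the series for $h\gqs 2$ --- collapses the sum over $m$ into the iterated sum \eqref{omn3}, whose innermost factor is $\sum_{\ell\gqs 1}\ell^{\,h-1-u} e^{-2\pi d\ell y}$. The one genuinely structural step is to recognise this as the negative-index polylogarithm $\operatorname{Li}_{-(h-1-u)}(e^{-2\pi d y})$ and to invoke Euler's closed form \eqref{fro} with $m=h-1-u$, rewriting it as $e^{-2\pi d y}\,A_{h-1-u}(e^{-2\pi d y})/(1-e^{-2\pi d y})^{h-u}$.

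Finally I would simplify the accumulated constants. Using $\mathcal A^0_h(u)=u!\binom{h-1}{u}\binom{h-1+u}{u}$ together with the identity $u!\binom{h-1}{u}/(h-1)!=1/(h-1-u)!$, and collecting the powers of $4$, $\pi$ and $y$ produced by the factor $(4\pi y)^{-u}$ in \eqref{omn3}, by $d^{-h-u}$, and by the prefactor $\tfrac{2\pi^h}{(h-1)!\,y^h}$, the coefficient in front of the $d$-sum becomes $\binom{h-1+u}{u}\tfrac{4^{1-u}\pi^{h-u}}{(h-1-u)!}$ multiplying $\sum_{d\gqs 1}(dy)^{-h-u}\,e^{-2\pi d y}A_{h-1-u}(e^{-2\pi d y})/(1-e^{-2\pi d y})^{h-u}$. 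Relabelling the index $d$ as $n$ gives the asserted formula, and the case $h=2$ recovers \eqref{cjk}.

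There is no deep obstacle in this argument: every ingredient is already in place, and the work is entirely bookkeeping. The main thing to watch is the propagation of constants through \eqref{chin}, \eqref{thet} and the rearrangement $m=d\ell$, and applying \eqref{fro} with the correct index ($m=h-1-u$, so the denominator exponent is $h-u$). It is also worth recording that for $h\gqs 2$ all the series involved converge absolutely and uniformly on compact subsets of $\{y>0\}$, which legitimises every interchange of summation.
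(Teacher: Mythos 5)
Your proposal is correct and is precisely the proof the paper gives: specialize \e{chin} at $s=h$, insert the expansion \e{omn}, evaluate the constant term via \e{thet}, rearrange the $m$-sum as in \e{omn2}--\e{omn3} via $m=d\ell$, and close the inner $\ell$-sum with Euler's formula \e{fro} for the negative-index polylogarithm before collecting constants. One small point your (correct) bookkeeping exposes: pushing the prefactor $\tfrac{2\pi^h}{(h-1)!\,y^h}$ through $\theta_0(1-h)y^{1-h}$ yields $2\pi\zeta(2h-1)\binom{2h-2}{h-1}4^{1-h}y^{1-2h}$, so the factor printed as $(4y)^{1-h}$ in the statement should be read as $4^{1-h}y^{1-2h}$ (this is what matches \e{cjk} at $h=2$), and the inner sum over $n$ should of course start at $n=1$.
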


As an aside we note that \e{e00} may be proved by using Lemma \ref{kb} in \e{e0}. Then another means of proving Theorem \ref{main2} is to apply $R_0^{k/2}$ to \e{e00}. The computations are similar to those of Section \ref{integer} and this was the proof sketched for \cite[Thm. 3.1]{DO10}. The advantage of first proving Theorem \ref{allk2} is that it is a more general result.

\section{The formulations of Maass and Brown} \label{brown}
 Maass defined his Eisenstein series as
\begin{equation} \label{maaasss}
  G(z, \overline{z};\alpha,\beta):= \sum_{\substack{m,n \in \Z \\(m,n)\neq (0,0)}}(mz+n)^{-\alpha}(m\overline{z}+n)^{-\beta}
\end{equation}
for $z\in \H$, $\alpha-\beta \in 2\Z$, $\Re(\alpha+\beta)>2$  and proved fundamental properties such as their Fourier expansion in terms of Whittaker functions, meromorphic continuation in $\alpha,$ $ \beta$, functional equation and eigenvalue properties. This work is described in \cite[Chap. 4]{Maa}. A detailed presentation of these results is also  given in \cite[Sect. 2.2]{ALR}.

For $a,b \in \Z_{\gqs 0}$ with $a+b=w$ positive and even, Brown  in \cite[Sect. 4]{B18} has similarly defined
\begin{equation} \label{brooo}
  \mathcal E_{a,b}(z):=\frac{w!}{(2\pi i)^{w+2}}\frac 12 \sum_{\substack{m,n \in \Z \\(m,n)\neq (0,0)}} \frac{-2\pi y}{(mz+n)^{a+1}(m\overline{z}+n)^{b+1}}.
\end{equation}
With a short calculation similar to \e{chin}, the series \e{maaasss} and \e{brooo} may be related to \e{defe}. This shows
\begin{align}
 G(z, \overline{z};\alpha,\beta) & = \frac{2}{\G((\alpha+\beta)/2+|\alpha-\beta|/2)} \left( \frac y\pi \right)^{(\alpha+\beta)/2}E^*_{\alpha-\beta}(z,(\alpha+\beta)/2), \notag\\
  \mathcal E_{a,b}(z) & =  (-4\pi y)^{-w/2} \frac{\G(w+1)}{2 \G(w/2+1+|a-b|/2)} E^*_{a-b}(z,w/2+1). \label{ebr}
\end{align}
We see that Brown's series $\mathcal E_{a,b}(z)$ corresponds to $E^*_k(z,h)$ with the restriction $|k/2|<h$, and these are the series appearing in the right triangle of Figure \ref{bfig}.
The Fourier expansion of $\mathcal E_{a,b}(z)$  follows as an exercise from Theorem \ref{main2}. With the formulation in Section 4 of \cite{B18} it may be written neatly as
\begin{equation*}
  \mathcal E_{a,b}(z) = \mathcal E^0_{a,b}(z) + R_{a,b}(z) + \overline{R_{b,a}(z)}
\end{equation*}
where
\begin{align*}
  \mathcal E^0_{a,b}(z) & = \frac{\pi y B_{w+2}}{(w+1)(w+2)} + (-1)^a \frac{w!}{2} \binom{w}{a}(4\pi y)^{-w} \zeta(w+1), \\
  R_{a,b}(z) & = \frac{(-1)^a}{2} \sum_{u=0}^a \frac{(a+b)!}{(a-u)!} \binom{b+u}{b} (4\pi y)^{-u-b}
\sum_{m=1}^\infty \frac{\sigma_{w+1}(m)}{m^{b+1+u}}e^{2\pi i m z}.
\end{align*}
This is proved in Sections 5.3 and 5.4 of \cite{B} as part of a larger framework.

\section{A formula of Terras and Grosswald for $\zeta(2h-1)$} \label{terras}

The Chowla-Selberg formula gives a series expansion for the Epstein zeta function $\zeta_Q(s)$ and may be recognized as special case of the expansion \e{e0} of $E_0^*(z,s)$ where $z$ depends on the binary quadratic form $Q$. See  Sections 2 and 3 of \cite{DIT}, for example, for a discussion of these ideas.
Terras generalized the Chowla-Selberg formula to $n$-ary quadratic forms and used this theory in \cite{Te76} to discover a new  formula for $\zeta(2h-1)$, similar to that of Lerch and Ramanujan in \e{z3} and \e{lerch}. The first cases are
\begin{align} \label{z3b}
  \zeta(3) & =\frac{2\pi^3}{45}-2\sum_{m=1}^\infty \sigma_{-3}(m) e^{-2\pi m}\left(1+2\pi m +4\pi^2 m^2\right),\\
\zeta(5) & =\frac{4\pi^5}{945}-2\sum_{m=1}^\infty \sigma_{-5}(m) e^{-2\pi m}\left(1+2\pi m +2\pi^2 m^2 + \frac 43 \pi^3m^3\right),
\end{align}
and in general, for $h \in \Z_{\gqs 2}$,
\begin{multline}\label{gros}
  \zeta(2h-1) = \frac{(h-2)!}{(2h-2)!}\left\{ \frac{(4\pi)^{2h-1} h! |B_{2h}|}{2(2h)!}\right. \\
\left.-\sum_{m=1}^\infty \sigma_{1-2h}(m) e^{-2\pi m}\left(\sum_{\ell=0}^h \frac{(h+\ell-2)! \big[h(h-1)+\ell(\ell-1)\big]}{\ell!(h-\ell)!} (4\pi m)^{h-\ell} \right)\right\}.
\end{multline}
Grosswald provided the explicit form of \e{gros} in \cite{Gr75}.

We may use Theorem \ref{main2} to prove \e{gros}, and indeed generalize it, as follows. For $S := \left(\smallmatrix 0 & -1 \\ 1 & 0
\endsmallmatrix\right)$ we have $S i = i$ and $j(S,i)=i$. Hence by \e{wtk}, $E_k(i,s)=i^k E_k(i,s)$ and so
\begin{equation}\label{eisid}
  E^*_k(i,s) \equiv 0 \qquad \text{for} \qquad k \equiv 2 \bmod 4.
\end{equation}
Taking \e{eisid} with $s=1-h$ for $h \in \Z_{\gqs 1}$ and expanding with Theorem \ref{main2} gives the identity
\begin{multline} \label{big}
  \theta_k(h)+\theta_k(1-h) =  \\
  -\sum_{m=1}^\infty \frac{\sigma_{1-2h}(m)}{m^{1-h}} e^{-2\pi m}\left(
\sum_{u=0}^{h-1+k/2} \mathcal A^k_h(u) \cdot (4\pi m)^{-u+k/2}+\sum_{u=0}^{h-1-k/2} \mathcal A^{-k}_h(u) \cdot (4\pi m)^{-u-k/2}
 \right)
\end{multline}
for all $k \equiv 2 \bmod 4$.
We may assume that $k$ and $h$ are positive in \e{big} since it is invariant as $k \to -k$ and $h \to 1-h$ (except for replacing $\sigma_{1-2h}(m)/m^{1-h}$ by $\sigma_{2h-1}(m)/m^{h}$).
Also $\theta_k(1-h) = 0$ for $2\lqs h \lqs |k|/2$ by \e{thet}, so it is natural to consider \e{big} in the three cases: (i) $h=1$, (ii) $2\lqs h\lqs k/2$ and (iii) $k/2<h$. The first case easily gives
\begin{equation} \label{bigh1}
  \frac{\zeta(2) k}{2\pi} + \zeta(0) = -\sum_{m=1}^\infty \sigma_{-1}(m) e^{-2\pi m}
\sum_{r=1}^{k/2} (-1)^r \binom{k/2}{r} \frac{(4\pi m)^{r}}{(r-1)!}
\end{equation}
where the left side of \e{bigh1} is just $(\pi k-6)/12$.

\begin{prop} \label{biga} Let $h$ and $k$ be positive integers with $k \equiv 2 \bmod 4$ and $2\lqs h\lqs k/2$. Then
\begin{equation*}
  \zeta(2h) = -\pi^h \sum_{m=1}^\infty \frac{\sigma_{1-2h}(m)}{m^{1-h}} e^{-2\pi m}
\sum_{r=1-h}^{k/2} (-1)^r \binom{k/2-h}{k/2-r} \frac{(4\pi m)^{r}}{(h-1+r)!}.
\end{equation*}
\end{prop}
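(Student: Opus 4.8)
The plan is to specialize the general identity \e{big} to the range $2\lqs h\lqs k/2$ and simplify. First I would note that since $2\lqs h\lqs k/2$ we have by \e{thet} that $\theta_k(1-h)=0$, so the left side of \e{big} reduces to the single term $\theta_k(h)=\pi^{-h}(h+k/2-1)!\,\zeta(2h)$, again using \e{thet}. This already exhibits a clean closed form for the constant-term contribution and isolates $\zeta(2h)$ once we divide through by $\pi^{-h}(h+k/2-1)!$.

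Next I would treat the two inner sums on the right of \e{big}. The plan is to show that in this range the second sum $\sum_{u=0}^{h-1-k/2}\mathcal A^{-k}_h(u)(4\pi m)^{-u-k/2}$ is empty: its upper index $h-1-k/2$ is negative because $h\lqs k/2$ forces $h-1-k/2\lqs -1$, so there is nothing there. (Equivalently, in the notation of Theorem \ref{main2} with $k$ replaced by $-k$, one has $h^*=h-1<k/2$, and Proposition \ref{aprop}(iv) together with the reduction statement in Theorem \ref{main2} confirms the sum contributes nothing.) Hence only the first inner sum $\sum_{u=0}^{h-1+k/2}\mathcal A^k_h(u)(4\pi m)^{-u+k/2}$ survives. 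I would then reindex with $r=k/2-u$, so that $u=k/2-r$ ranges as $u$ runs from $0$ to $h-1+k/2$, i.e. $r$ runs from $1-h$ to $k/2$, and $(4\pi m)^{-u+k/2}=(4\pi m)^{r}$, matching the summand exponent in the proposition.

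The remaining work is to check that $\mathcal A^k_h(k/2-r)$ equals $(-1)^r\binom{k/2-h}{k/2-r}\big/(h-1+r)!$ up to the overall normalization $\pi^{-h}(h+k/2-1)!$ pulled over from the left side. For this I would use the explicit formula \e{defak2} in the case $k\gqs 0$, namely $\mathcal A^k_h(u)=(-1)^{u+k/2}u!\binom{k/2+h-1}{u}\binom{k/2-h}{u}$ with $u=k/2-r$, or alternatively the $\G$-function form \e{defak2b}. Writing $\binom{k/2+h-1}{k/2-r}=(k/2+h-1)!/\big((k/2-r)!(h-1+r)!\big)$ and cancelling the $u!=(k/2-r)!$ factor, one gets $\mathcal A^k_h(k/2-r)=(-1)^{k/2-r+k/2}\,\frac{(k/2+h-1)!}{(h-1+r)!}\binom{k/2-h}{k/2-r}$; since $k\equiv2\bmod4$ gives $(-1)^{k}= 1$ and $(-1)^{-r}=(-1)^r$, the sign is $(-1)^r$, and dividing by $(h+k/2-1)!$ (and by $\pi^{-h}$, which reappears as $\pi^h$ on the right) yields exactly the claimed summand. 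Substituting $\sigma_{1-2h}(m)/m^{1-h}$ for $\sigma_{2h-1}(m)/m^h$ as already noted after \e{big} completes the identification. The only real bookkeeping obstacle is tracking the sign $(-1)^{u+k/2}$ under the reindexing and confirming it collapses to $(-1)^r$ using $k\equiv2\bmod4$; everything else is routine factorial manipulation.
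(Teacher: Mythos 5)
Your proposal is correct and follows essentially the same route as the paper: specialize \e{big} to $2\lqs h\lqs k/2$, observe that $\theta_k(1-h)=0$ and that the second inner sum is empty, then reindex with $r=k/2-u$ and simplify $\mathcal A^k_h(k/2-r)$ via \e{defak2}. The only cosmetic quibbles are that the sign collapse to $(-1)^r$ needs only $k$ even (not $k\equiv 2\bmod 4$), and no substitution of $\sigma_{1-2h}(m)/m^{1-h}$ is needed since \e{big} is already written in that form.
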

\begin{proof}
For these values of $h$ equation \e{big} implies
\begin{equation*}
  \theta_k(h) =  \\
  -\sum_{m=1}^\infty \frac{\sigma_{1-2h}(m)}{m^{1-h}} e^{-2\pi m}
\sum_{u=0}^{h-1+k/2} \mathcal A^k_h(u) \cdot (4\pi m)^{-u+k/2}.
\end{equation*}
Simplifying and letting $r=k/2-u$ gives the result.
\end{proof}

Examples of Proposition \ref{biga} with $(h,k)=(2,6)$, $(3,6)$ and $(3,10)$ are
\begin{align*}
  \zeta(4) & = \frac{8\pi}{3}\sum_{m=1}^\infty \sigma_{-3}(m)e^{-2\pi m}\left( -\pi^3 m^3 + \pi^4 m^4\right), \\
\zeta(6) & = \frac{8\pi}{15}\sum_{m=1}^\infty \sigma_{-5}(m)e^{-2\pi m}\left(  \pi^5 m^5 \right), \\
 \zeta(6) & = \frac{8\pi}{15}\sum_{m=1}^\infty \sigma_{-5}(m)e^{-2\pi m}\left(  \pi^5 m^5 -\frac 43 \pi^6 m^6 + \frac 8{21} \pi^7 m^7 \right).
\end{align*}
These may be compared with \e{euler}.

\begin{prop} \label{bigb} Let $h$ and $k$ be positive integers with $k \equiv 2 \bmod 4$ and $h > k/2$. Then
\begin{multline}\label{grosb}
  \zeta(2h-1) = \frac{(h-k/2-1)!}{(2h-2)!}\Bigg\{ \frac{(4\pi)^{2h-1} (h+k/2-1)! |B_{2h}|}{2(2h)!}-\sum_{m=1}^\infty \sigma_{1-2h}(m) e^{-2\pi m} \\
\times \sum_{\ell=1-k/2}^{h} \frac{k!(h+\ell-2)!}{(\ell+k/2-1)!(h-\ell)!}\left[ \binom{k/2+h-1}{k}+\binom{k/2+\ell-1}{k}\right] (4\pi m)^{h-\ell} \Bigg\}.
\end{multline}
\end{prop}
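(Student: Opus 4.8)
The argument continues the scheme already used for \eqref{bigh1} and Proposition \ref{biga}; the new feature in the range $k/2<h$ is that \emph{both} constant--term factors $\theta_k(h)$ and $\theta_k(1-h)$ are nonzero, so after substituting their values into \eqref{big} we obtain a genuine linear equation for $\zeta(2h-1)$. First I would take \eqref{big} with $k\equiv 2\bmod 4$ and, as permitted after \eqref{big}, with $h,k>0$, and evaluate the two $\theta$'s from \eqref{thet}: the first branch gives $\theta_k(h)=\pi^{-h}(h+k/2-1)!\,\zeta(2h)$, while $h\gqs k/2+1$ puts $1-h$ into the range $1-h\lqs -k/2$ of the third branch, so $\theta_k(1-h)=(-1)^{k/2}(4\pi)^{1-h}\frac{(2h-2)!}{(h-k/2-1)!}\zeta(2h-1)$, with $(-1)^{k/2}=-1$ since $k\equiv 2\bmod 4$. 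Transposing \eqref{big} and multiplying by $-(4\pi)^{h-1}(h-k/2-1)!/(2h-2)!$ isolates $\zeta(2h-1)$. Replacing $\zeta(2h)$ by its Euler value $(2\pi)^{2h}|B_{2h}|/(2(2h)!)$ (from \eqref{euler}, using $\sgn B_{2h}=(-1)^{h+1}$) and collecting the powers $(4\pi)^{h-1}\pi^{-h}(2\pi)^{2h}=(4\pi)^{2h-1}$, the constant contribution becomes $\frac{(h-k/2-1)!}{(2h-2)!}\cdot\frac{(4\pi)^{2h-1}(h+k/2-1)!\,|B_{2h}|}{2(2h)!}$, which is the non--summation term of \eqref{grosb}.

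For the $m$--series, the clearing factor carries an extra $(4\pi)^{h-1}$, which combines the prefactor $\sigma_{1-2h}(m)/m^{1-h}=\sigma_{1-2h}(m)m^{h-1}$ with each monomial $(4\pi m)^{-u\pm k/2}$ occurring in \eqref{big} into $\sigma_{1-2h}(m)(4\pi m)^{h-\ell}$, where $\ell:=u+1-k/2$ in the first inner sum and $\ell:=u+1+k/2$ in the second; then $0\lqs u\lqs h-1+k/2$ becomes $1-k/2\lqs\ell\lqs h$, and $0\lqs u\lqs h-1-k/2$ becomes $1+k/2\lqs\ell\lqs h$. To rewrite the coefficients I would use the gamma--function form \eqref{defak2b}: at $u=\ell-1+k/2$ it gives $\mathcal A^k_h(u)=(-1)^{k/2}\frac{(h+\ell-2)!\,(h+k/2-1)!}{(\ell+k/2-1)!\,(h-\ell)!\,(h-k/2-1)!}$, and at $u=\ell-1-k/2$ it gives $\mathcal A^{-k}_h(u)=(-1)^{k/2}\frac{(h+\ell-2)!}{(\ell-k/2-1)!\,(h-\ell)!}$ --- for $\ell$ in the indicated ranges every gamma--argument is a positive integer, so no limiting values intervene. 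Using $\binom{k/2+h-1}{k}=\frac{(h+k/2-1)!}{k!\,(h-k/2-1)!}$, $\binom{k/2+\ell-1}{k}=\frac{(\ell+k/2-1)!}{k!\,(\ell-k/2-1)!}$ and $(-1)^{k/2}=-1$, the first inner sum (with its overall sign moved onto the prefactor, matching the minus in \eqref{grosb}) becomes $\sum_\ell\frac{k!(h+\ell-2)!}{(\ell+k/2-1)!(h-\ell)!}\binom{k/2+h-1}{k}(4\pi m)^{h-\ell}$ and the second the same expression with $\binom{k/2+\ell-1}{k}$ in place of $\binom{k/2+h-1}{k}$.

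The last step is to merge the two $\ell$--sums over the common range $1-k/2\lqs\ell\lqs h$: when $1-k/2\lqs\ell\lqs k/2$ one has $0\lqs k/2+\ell-1<k$, so $\binom{k/2+\ell-1}{k}=0$ by \eqref{binb} and the second summand contributes nothing there, which is exactly what permits extending its range down to $1-k/2$; adding the two then produces the single bracket $\bigl[\binom{k/2+h-1}{k}+\binom{k/2+\ell-1}{k}\bigr]$ of \eqref{grosb}, completing the proof. I expect the only delicate point to be bookkeeping rather than any new idea: keeping straight the single overall sign arising jointly from $(-1)^{k/2}$ and from transposing \eqref{big}, and confirming that the accumulated powers of $2$, $\pi$ and the factorials collapse to the stated prefactors. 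The representation \eqref{defak2b} makes the coefficient identification essentially mechanical and \eqref{binb} disposes of the range merge, so there is no substantive obstacle beyond careful computation.
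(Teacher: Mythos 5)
Your argument is correct and follows essentially the same route as the paper: start from \eqref{big}, evaluate $\theta_k(h)$ and $\theta_k(1-h)$ via \eqref{thet}, solve for $\zeta(2h-1)$, and reindex with $\ell=u+1-k/2$ (resp.\ $\ell=u+1+k/2$), the only cosmetic differences being that you use \eqref{defak2b} where the paper uses \eqref{defak2} with \eqref{binneg}, and you merge the two sums at the $\ell$-stage rather than shifting $u\to u-k$ first. One trivial nit: the vanishing of $\binom{k/2+\ell-1}{k}$ for $0\lqs k/2+\ell-1<k$ follows directly from the definition \eqref{bin}, not from \eqref{binb}, but the fact itself is of course right.
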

\begin{proof}
We use \e{big} and write the sums in parentheses as
\begin{equation*}
  \sum_{u=0}^{h-1+k/2} (4\pi m)^{-u+k/2} \left(\mathcal A^k_h(u) + \mathcal A^{-k}_h(u-k) \right)
\end{equation*}
with the understanding that $\mathcal A^{-k}_h(u-k)=0$ for $u<k$. Using the identity \e{binneg}
to make all the entries positive we obtain
\begin{align*}
  \mathcal A^k_h(u) & = (-1)^{k/2} u! \binom{k/2+h-1}{u}\binom{h-1-k/2+u}{u} \\
& = -\frac{1}{u!} \cdot \frac{(h-1-k/2+u)!}{(h-1+k/2-u)!} \cdot \frac{(h-1+k/2)!}{(h-1-k/2)!}.
\end{align*}
Similarly,
\begin{align*}
  \mathcal A^{-k}_h(u-k) & = (-1)^{k/2} \frac{[(u-k/2)!]^2}{(u-k)!} \binom{h-1}{u-k/2} \binom{h-1+u-k/2}{u-k/2} \\
& = -\frac{1}{(u-k)!} \cdot \frac{(h-1-k/2+u)!}{(h-1+k/2-u)!}
\end{align*}
for $k\lqs u \lqs h-1+k/2$. Rewrite the term $1/(u-k)!$  above as $\frac{k!}{u!} \binom{u}{k}$ to get the formula we want for $0\lqs u \lqs h-1+k/2$. Simplifying and  setting $\ell = u-k/2+1$ finishes the proof.
\end{proof}

Clearly, the Terras-Grosswald formula \e{gros} is the first case of Proposition \ref{bigb} when $k=2$. The following examples show the results of Lerch-Ramanujan and Terras-Grosswald, respectively, for $\zeta(7)$:
\begin{align*}
  \zeta(7) & = \frac{19 \pi^7}{56700} - 2\sum_{m=1}^\infty \sigma_{-7}(m) e^{-2\pi m}, \\
 \zeta(7) & = \frac{32 \pi^7}{70875} - 2\sum_{m=1}^\infty \sigma_{-7}(m) e^{-2\pi m}\left(1+2\pi m+\frac{28\pi^2 m^2}{15}+\frac{16\pi^3 m^3}{15}+\frac{16\pi^4 m^4}{45}\right).
\end{align*}
By comparison, Proposition \ref{bigb} with $h=4$ and $k=6$ gives
\begin{multline*}
 \zeta(7) = \frac{32 \pi^7}{4725} - 2\sum_{m=1}^\infty \sigma_{-7}(m) e^{-2\pi m}\\
\times\left(1+2\pi m+ 4\pi^2 m^2+\frac{16\pi^3 m^3}{3}+\frac{16\pi^4 m^4}{3}+\frac{64\pi^5 m^5}{15}+\frac{128\pi^6 m^6}{45}\right).
\end{multline*}

\section{Harmonic Eisenstein series} \label{harmo}
For each $k \in 2\Z$ set
\begin{equation}\label{emh}
\ei_k(z) := y^{-k/2}\frac{E^*_k(z,k/2)}{\theta_k(k/2)}.
\end{equation}
This definition  has poles when $k=0$  and so we interpret $\ei_0(z)$ as $\lim_{s\to 0}  E^*_0(z,s)/\theta_0(s)$.
These series are  studied by Pribitkin in \cite{Pr00} and have also appeared recently in \cite[Sect. 6.1.4]{BK}.
 Clearly $\ei_k(z)$ has holomorphic weight $k$. For even $k \geqslant 4$ we have from \e{ehol}, \e{compl} that $\ei_k(z) = E_k(z)$ and so we just recover the usual holomorphic Eisenstein series for these $k$. We may regard
 $\ei_k(z)$ for even $k \lqs 2$ as natural extensions of these holomorphic Eisenstein series.

 It follows from \e{alls} that
\begin{equation} \label{egex}
 \left(y^{-k/2} \circ \Delta_k \circ y^{k/2} - k/2(1-k/2)\right)\ei_k(z) = 0.
\end{equation}
Therefore
\begin{equation*}
  \left( -y^2 \left( \frac{\partial ^2}{\partial x^2} +  \frac{\partial ^2}{\partial y^2}\right) +ik y \left(\frac{\partial }{\partial x}+i\frac{\partial }{\partial y} \right)\right)\ei_k(z) = 0
\end{equation*}
which implies that $\ei_k(z)$ is a {\em harmonic Maass form}. See for example \cite{Ono,BFOR} for more on harmonic Maass forms and surveys of their important applications. We next obtain the Fourier expansion of $\ei_k(z)$ and see that the result can be formulated in a way that is valid for all even $k$.
It will also be apparent that for nonzero even $k \lqs 2$,  $\ei_k(z)$ differs from $E_k(z)$ in its extended Fourier expansion definition \e{eisk} and is not holomorphic.

\begin{theorem} \label{andr} Define $\varepsilon(k)$ to be $2$ if $k\gqs 0$ and $1$ if $k< 0$.
Then for  all $k \in 2\Z$ we have
\begin{multline} \label{negk}
 \ei_k(z) =
1  +
\frac{\varepsilon(k)}{\zeta(1-k)} \Bigg[\frac{\zeta(2-k)}{(2\pi i)^k} \left(\frac{y}{\pi}\right)^{1-k} \\
 +
\sum_{m =1}^\infty  \sigma_{k-1}(m) \ e^{2\pi i m z}
+
   \sum_{m =1}^\infty \sigma_{k-1}(m) \ e^{-2\pi i m \overline{z}} \sum_{u=0}^{-k}  \frac{(4\pi m y)^{u}}{u!}\Bigg].
\end{multline}
\end{theorem}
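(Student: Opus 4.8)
The plan is to specialise Theorem \ref{main2} to $s=h=k/2$ and simplify the resulting Fourier expansion of $E^*_k(z,k/2)$, treating the weights $k>0$, $k<0$ and $k=0$ separately and using \eqref{thet} to evaluate all the completed-zeta constants that appear.

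I would dispose of $k=0$ first. Here \eqref{emh} is interpreted as a limit, and since $\theta_0(s)=\theta(s)$ has a simple pole at $s=0$ with residue $-1/2$ (by the remark after \eqref{riez}) and $E^*_0(z,s)$ has a simple pole there with the same residue (by \eqref{simp}), the quotient $E^*_0(z,s)/\theta_0(s)$ tends to $1$ as $s\to0$; thus $\ei_0(z)=1$, which is exactly the right side of \eqref{negk} once the factor $\varepsilon(0)/\zeta(1)$ is read as $0$. For $k\neq0$ the constant $\theta_k(k/2)$ is nonzero by \eqref{thet} (its first case when $k>0$, its third case when $k<0$), so Theorem \ref{main2} applies directly with $h=k/2$.

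Next I would record the shape of the two $m$-series in \eqref{mnk2} along this diagonal. When $k>0$ one has $h=k/2\geq1$, hence $h^*=k/2-1<k/2$, so by Proposition \ref{aprop}(iv) (equivalently by the index reduction stated in Theorem \ref{main2}) the first inner sum keeps only its $u=0$ term, while $h^*-k/2=-1$ makes the second inner sum empty; also $\zeta(2-k)=0$ and $\sum_{u=0}^{-k}$ is empty, so only the holomorphic piece and the constant term of \eqref{negk} survive. When $k<0$ one has $h=k/2\leq0$, hence $h^*=-k/2$, the first inner sum again reduces to $u=0$ (its upper index $h^*+k/2$ is $0$), and the second inner sum runs over $0\leq u\leq -k$ (no reduction, by Proposition \ref{aprop}(v)). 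In either case $\mathcal A^k_{k/2}(0)$ is read off from \eqref{defak2} (or Proposition \ref{aprop}(iii)): it equals $(-1)^{k/2}$ for $k>0$ and $(-1)^{|k|/2}|k|!$ for $k<0$. Dividing through by $y^{k/2}\theta_k(k/2)$ as in \eqref{emh} and using $(4\pi my)^{k/2}/m^{k/2}=(4\pi y)^{k/2}$ turns the holomorphic $m$-series into the constant $\mathcal A^k_{k/2}(0)(4\pi)^{k/2}/\theta_k(k/2)$ times $\sum_{m\geq1}\sigma_{k-1}(m)e^{2\pi imz}$.

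The remaining work is bookkeeping. For $k>0$, inserting $\theta_k(k/2)=\pi^{-k/2}(k-1)!\,\zeta(k)$ from \eqref{thet}, then Euler's formula $\zeta(k)=(-1)^{k/2+1}(2\pi)^kB_k/(2\,k!)$ and $\zeta(1-k)=-B_k/k$, collapses the above constant to $2/\zeta(1-k)=\varepsilon(k)/\zeta(1-k)$; the constant term $\bigl(\theta_k(1-k/2)/\theta_k(k/2)\bigr)y^{1-k}$ is reduced the same way (using $(2\pi i)^k=(-1)^{k/2}(2\pi)^k$), and for $k\geq4$ it vanishes because $\zeta(2-k)=0$ and $\theta_k(1-k/2)=0$ by the second case of \eqref{thet}. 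For $k<0$ the anti-holomorphic piece is present: from \eqref{defak2}, using $\binom{-1}{u}=(-1)^u$ and \eqref{binneg}, one gets $\mathcal A^{-k}_{k/2}(u)=(-1)^{|k|/2}|k|!/(-k-u)!$, so that in the inner sum $\sum_{u=0}^{-k}\mathcal A^{-k}_{k/2}(u)(4\pi my)^{-u-k/2}$ the substitution $v=-k-u$ produces $(-1)^{|k|/2}|k|!\sum_{v=0}^{-k}(4\pi my)^{v+k/2}/v!$; dividing by $y^{k/2}\theta_k(k/2)$ with $\theta_k(k/2)=(-1)^{|k|/2}(4\pi)^{k/2}|k|!\,\zeta(1-k)$ from the third case of \eqref{thet} leaves exactly $\frac{1}{\zeta(1-k)}\sum_{m\geq1}\sigma_{k-1}(m)e^{-2\pi im\overline z}\sum_{u=0}^{-k}(4\pi my)^u/u!$, and the same $\theta_k(k/2)$ handles the holomorphic piece (the $v=0$ case) and, together with the first case of \eqref{thet}, the constant term. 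Since $\varepsilon(k)=2$ for $k\geq0$ and $\varepsilon(k)=1$ for $k<0$, assembling the three pieces gives \eqref{negk}. The hard part will be the combinatorial collapse $v!\,\mathcal A^{-k}_{k/2}(-k-v)=(-1)^{|k|/2}|k|!$ for $0\leq v\leq -k$ --- i.e.\ recognising the truncated inner sum of Theorem \ref{main2} along this diagonal as $\sum_u(4\pi my)^u/u!$ up to a scalar --- together with keeping all the signs and the powers of $4$, $\pi$ and $i$ consistent when passing between \eqref{thet}, Euler's formula and $\zeta(1-k)=-B_k/k$.
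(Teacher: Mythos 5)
Your proposal is correct and follows essentially the same route as the paper: specialise Theorem \ref{main2} to $h=k/2$, treat $k=0$ by the limit of $E^*_0(z,s)/\theta_0(s)$ via the matching residues, reduce the inner sums using $h^*=k/2-1$ (resp.\ $h^*=-k/2$), evaluate $\mathcal A^k_{k/2}(0)$ and $\mathcal A^{-k}_{k/2}(u)$ from \eqref{defak2}, and normalise with \eqref{thet}. The only cosmetic difference is that you invoke Euler's formula and $\zeta(1-k)=-B_k/k$ where the paper uses the equivalent identity $(2\pi i)^k\zeta(1-k)=2(k-1)!\zeta(k)$; all your coefficient computations, including $v!\,\mathcal A^{-k}_{k/2}(-k-v)=(-1)^{|k|/2}|k|!$, check out.
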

\begin{proof}
This is a straightforward application of Theorem \ref{main2} and we review the different cases.
For $k\gqs 2$ and $h=k/2$ then $h^*=k/2-1<k/2$ and
\begin{equation*}
  E^*_k(z,k/2)= \theta_k(k/2) y^{k/2}+\theta_k(1-k/2) y^{1-k/2} +
   \sum_{m =1}^\infty \frac{ \sigma_{k-1}(m)}{m^{k/2}} \ e^{2\pi i m z}  \mathcal A^k_{k/2}(0) \cdot (4\pi m y)^{k/2}.
\end{equation*}
We have $\mathcal A^k_{k/2}(0) = (-1)^{k/2}$ and hence
\begin{equation*}
  \ei_k(z) = 1+ \frac{\theta_k(1-k/2)}{\theta_k(k/2)} y^{1-k}+ \frac{(-4\pi)^{k/2}}{\theta_k(k/2)}
 \sum_{m =1}^\infty  \sigma_{k-1}(m) \ e^{2\pi i m z}  \qquad (k\gqs 2).
\end{equation*}
With \e{thet} we have $\theta_k(k/2) = \pi^{-k/2}(k-1)!\zeta(k)$. Also $\theta_k(1-k/2)$ is $ \zeta(0)=-1/2$ if $k=2$ and $0$ if $k>2$. Therefore
\begin{equation}\label{quasi}
  \ei_2(z) = 1 - \frac{3}{\pi y} -24 \sum_{m=1}^\infty \sigma_1(m) e^{2\pi i mz}
\end{equation}
and, using $(2\pi i)^{k} \zeta(1-k)=2 (k-1)!\zeta(k)$ for $k\gqs 2$,
\begin{equation}\label{ekep}
  \ei_k(z) = 1 +\frac{2}{\zeta(1-k)} \sum_{m=1}^\infty \sigma_{k-1}(m) e^{2\pi i mz} \qquad (k\gqs 4)
\end{equation}
as expected. Equations \e{quasi} and \e{ekep} match \e{negk} because $\zeta(2-k)$ is zero for even $k\gqs 4$ and the last sum, over $u$ in an empty range, vanishes for positive $k$. The important weight $2$ non-holomorphic form $\ei_2(z)$ was found by Hecke; see for example \cite[Sect. 2.3]{Za} where it is denoted $E^*_2(z)$. Hence $-3/(\pi y)$ must be added to $E_2$ in \e{eisk} so that it transforms with weight $2$.

For $k=0$ we have
\begin{equation*}
  \ei_0(z) = \lim_{s\to 0} \frac{E^*_0(z,s)}{\theta_0(s)} = \lim_{s\to 0} \frac{\theta_0(s) +O(1)}{\theta_0(s)} = \lim_{s\to 0} \frac{-1/(2s) +O(1)}{-1/(2s) +O(1)} =1.
\end{equation*}
Since $\zeta(s)$ has a pole at $s=1$ we take $\varepsilon(k)/\zeta(1-k)$ to be $0$ when $k=0$ and so \e{negk} confirms that $\ei_0(z)=1$.

Lastly we assume $k\lqs -2$ and $h=k/2$ so that $h^*=-k/2$. Theorem \ref{main2} implies that
\begin{multline} \label{tiac}
  \ei_k(z) = 1+ \frac{\theta_k(1-k/2)}{\theta_k(k/2)} y^{1-k}+ \frac{(4\pi)^{k/2}}{\theta_k(k/2)}
 \sum_{m =1}^\infty  \sigma_{k-1}(m) \ e^{2\pi i m z} \mathcal A^k_{k/2}(0)  \\
  + \frac{1}{\theta_k(k/2)}
 \sum_{m =1}^\infty  \frac{\sigma_{k-1}(m)}{(my)^{k/2}} \ e^{-2\pi i m \overline z} \sum_{u=0}^{-k} \mathcal A^{-k}_{k/2}(u) \cdot (4\pi m y)^{-u-k/2}
\qquad (k\lqs -2).
\end{multline}
With \e{defak2}, \e{binneg} and writing $-k$ as $|k|$ for clarity
\begin{align*}
  \mathcal A^k_{k/2}(0) & = [(|k|/2)!]^2 \binom{-|k|/2-1}{|k|/2}\binom{|k|/2}{|k|/2} = (-1)^{k/2} [(|k|/2)!]^2 \binom{|k|}{|k|/2} = (-1)^{k/2} |k|!,\\
  \mathcal A^{-k}_{k/2}(u) & = (-1)^{u+k/2} u! \binom{-1}{u}\binom{|k|}{u} = (-1)^{k/2} u! \binom{|k|}{u} = (-1)^{k/2} \frac{|k|!}{(|k|-u)!}.
\end{align*}
 With \e{thet} we see
\begin{equation*}
  \theta_k(k/2)  =(-4\pi)^{k/2} |k|! \zeta(1-k), \qquad \theta_k(1-k/2)  =\pi^{k/2-1} |k|! \zeta(2-k).
\end{equation*}
The proof is finished by substituting these calculations into \e{tiac} and simplifying.
\end{proof}


\SpecialCoor
\psset{griddots=5,subgriddiv=0,gridlabels=0pt}
\psset{xunit=1.8cm, yunit=1.2cm, runit=1.8cm}
\psset{linewidth=1pt}
\psset{dotsize=4.5pt 0,dotstyle=*}
\psset{arrowscale=1.5,arrowinset=0.3}

\begin{figure}[ht]
\centering
\begin{pspicture}(-3,-2.5)(4,3.5) 

\pscustom[fillstyle=gradient,gradangle=0,linecolor=white,gradmidpoint=1,gradbegin=white,gradend=lightorange,gradlines=100]{%
\pspolygon*[linecolor=lightblue](-2.5,3.5)(0.5,0.5)(3.5,3.5)} 
\pscustom[fillstyle=gradient,gradangle=180,linecolor=white,gradmidpoint=1,gradbegin=white,gradend=lightorange,gradlines=100]{%
\pspolygon*[linecolor=lightblue](-1.5,-2.5)(0.5,-0.5)(2.5,-2.5)} 
\pscustom[fillstyle=gradient,gradangle=270,linecolor=white,gradmidpoint=1,gradbegin=white,gradend=lightblue,gradlines=100]{%
\pspolygon*[linecolor=lightblue](3.5,2.5)(1,0)(3.5,-2.5)} 
\pscustom[fillstyle=gradient,gradangle=90,linecolor=white,gradmidpoint=1,gradbegin=white,gradend=lightblue,gradlines=100]{%
  \pspolygon[linecolor=lightblue](-2.5,2.5)(0,0)(-2.5,-2.5)} 

\psline[linecolor=gray]{->}(-2.5,0)(3.5,0)
\psline[linecolor=gray]{->}(0,-2.5)(0,3.5)


\multirput(-2,-0.2)(1,0){6}{\psline[linecolor=gray](0,0)(0,0.4)}
\multirput(-0.07,-2)(0,0.5){11}{\psline[linecolor=gray](0,0)(0.14,0)}

\psline[linecolor=red](-1.5,-2.5)(3.5,2.5)
\psline[linecolor=red](-2.5,3.5)(3.5,-2.5)
\psline[linecolor=red](-2.5,2.5)(2.5,-2.5)
\psline(-2.5,-2.5)(3.5,3.5)

\multirput(-2,-2)(1,0){6}{\psdot(0,0)}
\multirput(-2,-1)(1,0){6}{\psdot(0,0)}
\multirput(-2,-0)(1,0){6}{\psdot(0,0)}
\multirput(-2,1)(1,0){6}{\psdot(0,0)}
\multirput(-2,2)(1,0){6}{\psdot(0,0)}
\multirput(-2,3)(1,0){6}{\psdot(0,0)}

\pscircle*[linecolor=white,linewidth=1pt](0,0){0.08}
\pscircle[linecolor=black,linewidth=1pt](0,0){0.08}
\pscircle*[linecolor=white,linewidth=1pt](1,0){0.08}
\pscircle[linecolor=black,linewidth=1pt](1,0){0.08}

\rput(3.5,-0.4){$h$}
\rput(-0.3,3.5){$k$}
\rput(0.8,1.3){$y\ei_2(z)$}
\rput(1.7,2.3){$y^2 E_4(z)$}
\rput(2.7,3.3){$y^3 E_6(z)$}
\rput(-1.4,-0.7){$y^{-1}\ei_{-2}(z)$}
\rput(-2.4,-1.7){$y^{-2}\ei_{-4}(z)$}

\rput(2.35,0.7){$y^{-1}\overline{\ei}_{-2}(z)$}
\rput(3.35,1.7){$y^{-2}\overline{\ei}_{-4}(z)$}



\end{pspicture}
\caption{Eisenstein series on the main diagonal}
\label{cfig}
\end{figure}


As  seen in Figure \ref{cfig}, the harmonic Eisenstein series $\ei_k(z)$ times  $y^{k/2}$ may be used as representatives for the spaces generated by $E^*_k(z,h)$ on the main diagonal $h=k/2$. We indicated on the left of Figure \ref{bfig} that, with \e{raise}, \e{lower}, the raising operators naturally move up the lattice and the lowering operators move down. Thus we see that $y^{k/2}\ei_k(z)$ for $k/2 \in \Z_{\gqs 1}$ generates the upper triangle of Figure \ref{cfig} by applying the raising operators -- recall that the left and right sides of this triangle are equal by \e{fek}. We may not leave the upper triangle by means of the lowering operator since $L_k E^*_k(z,k/2)=0$ for $k/2\in \Z_{\gqs 2}$ (though $L_2 E^*_2(z,1)=1/2$). The left and right triangles of Figure \ref{cfig} are generated by $y^{k/2}\ei_k(z)$ for $k/2 \in \Z_{\lqs -1}$ and the raising operators (or alternatively $y^{k/2}\overline{\ei}_k(z)$ for $k/2 \in \Z_{\lqs -1}$ and the lowering operators).

\subsection{The holomorphic part of $\ei_k(z)$}
In this subsection we assume that $k$ is even and negative. The holomorphic series in \e{negk} may be rewritten as
\begin{equation}\label{xxx}
  U_k(z):=\sum_{m =1}^\infty  \sigma_{k-1}(m) \ e^{2\pi i m z} = (2\pi i)^{1-k}\sum_{m =1}^\infty \frac{\sigma_{1-k}(m)}{(2\pi i m)^{1-k}} \ e^{2\pi i m z}.
\end{equation}
Hence it is a constant times a $(1-k)$ fold integral of $E_{2-k}(z)-1$, as noted by Pribitkin in \cite{Pr00}:
\begin{equation*}
  U_k(z)=\frac{(1-k)!\zeta(2-k)}{2\pi i} \int_{i\infty}^z \cdots \int_{i\infty}^{z_2} \int_{i\infty}^{z_1} (E_{2-k}(z_0)-1)\, dz_0 \, dz_1 \, \cdots \, dz_{|k|}.
\end{equation*}
Integrating by parts shows the alternate expression
\begin{equation} \label{inz}
  U_k(z) = \frac{(1-k)\zeta(2-k)}{2\pi i} \int_{i\infty}^z (E_{2-k}(w)-1)(w-z)^{-k} \, dw.
\end{equation}

The integral $U_k(z)$ is a type of {\em Eichler integral} and  transforms with weight $k$ except for an additional rational function of $z$. Following \cite[(7)]{Pr00} we have:

\begin{lemma}
Let $\g =\left(\smallmatrix * & * \\ c & *
\endsmallmatrix\right) \in \G$ with $c\neq 0$. Then
\begin{multline} \label{transi}
  j(\g,z)^{-k} U_k(\g z) - U_k(z) = \frac{(1-k)\zeta(2-k)}{2\pi i}\Bigg\{\frac 1{c^{1-k}(1-k)}\left(j(\g,z)^{1-k}+\frac 1{j(\g,z)} \right) \\
  +\int^{i \infty}_{\g^{-1}(i \infty)}\left(E_{2-k}(w)-1-\frac 1{j(\g,w)^{2-k}} \right) (w-z)^{-k}\, dw\Bigg\}.
\end{multline}
\end{lemma}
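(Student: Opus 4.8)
The plan is to derive \e{transi} from the Eichler integral representation \e{inz} by substituting $w\mapsto\g w$ in the integral for $U_k(\g z)$ and then bookkeeping the resulting boundary terms --- the standard computation for Eichler integrals. Write $\g=\left(\smallmatrix a & b\\ c & d\endsmallmatrix\right)$, so that $\g^{-1}(i\infty)=-d/c$ (this is where $c\neq0$ is used), and recall that $2-k$ is an even integer $\gqs 4$, so $E_{2-k}$ is a genuine holomorphic modular form of weight $2-k$. Starting from
\[
 U_k(\g z)=\frac{(1-k)\zeta(2-k)}{2\pi i}\int_{i\infty}^{\g z}\bigl(E_{2-k}(w)-1\bigr)(w-\g z)^{-k}\,dw ,
\]
I substitute $w=\g v$. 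Then $E_{2-k}(\g v)=j(\g,v)^{2-k}E_{2-k}(v)$, the cocycle identity $\g v-\g z=(v-z)/\bigl(j(\g,v)\,j(\g,z)\bigr)$, and $d(\g v)=j(\g,v)^{-2}\,dv$ together make every power of $j(\g,v)$ cancel except in the constant term (note $-k$ is a positive integer, so $(w-\g z)^{-k}$ and $(w-z)^{-k}$ are polynomial in the integration variable), leaving
\[
 j(\g,z)^{-k}U_k(\g z)=\frac{(1-k)\zeta(2-k)}{2\pi i}\int_{\g^{-1}(i\infty)}^{z}\Bigl(E_{2-k}(w)-j(\g,w)^{-(2-k)}\Bigr)(w-z)^{-k}\,dw ,
\]
the contour now running from the cusp $-d/c$ up to $z$.

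Next I subtract $U_k(z)$ and regroup the two integrands so that their modular parts match: in the integral just obtained I add and subtract $1$, and in \e{inz} I add and subtract $j(\g,w)^{-(2-k)}$. Each of the four resulting integrals converges on its own. Near $w=-d/c$ one has $\g w\to i\infty$ with $\Im(\g w)=\Im w/|j(\g,w)|^2\to\infty$, so that $E_{2-k}(w)-j(\g,w)^{-(2-k)}=j(\g,w)^{-(2-k)}\bigl(E_{2-k}(\g w)-1\bigr)$ tends to $0$ faster than any power of $w+d/c$; near $w=i\infty$ one has $E_{2-k}(w)-1=O\bigl(e^{-2\pi\Im w}\bigr)$ and $j(\g,w)^{-(2-k)}(w-z)^{-k}=O(|w|^{-2})$. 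The common integrand $\bigl(E_{2-k}(w)-1-j(\g,w)^{-(2-k)}\bigr)(w-z)^{-k}$ is holomorphic on $\H$, so Cauchy's theorem turns $\int_{\g^{-1}(i\infty)}^{z}-\int_{i\infty}^{z}$ of it into $\int_{\g^{-1}(i\infty)}^{i\infty}$ of it --- precisely the integral appearing in \e{transi}. What is left over is the elementary difference
\[
 \int_{\g^{-1}(i\infty)}^{z}(w-z)^{-k}\,dw\;-\;\int_{i\infty}^{z}j(\g,w)^{-(2-k)}(w-z)^{-k}\,dw .
\]

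Finally I evaluate these two integrals in closed form. Since $-k=|k|\gqs 2$, the first integrand is the polynomial $(w-z)^{-k}$ with antiderivative $(w-z)^{1-k}/(1-k)$; using $-d/c-z=-j(\g,z)/c$ together with the fact that $1-k$ is odd, the first integral equals $j(\g,z)^{1-k}/\bigl(c^{1-k}(1-k)\bigr)$. For the second, substitute $u=cw+d$, so that $w-z=\bigl(u-j(\g,z)\bigr)/c$ and the $w$-contour from $i\infty$ to $z$ becomes a $u$-contour from $\infty$ to $j(\g,z)$ remaining in the half-plane $c\H+d$, which misses $u=0$; expanding $\bigl(u-j(\g,z)\bigr)^{-k}$ by the binomial theorem, integrating term by term, and invoking $\sum_{j=0}^{n}(-1)^{j}\binom{n}{j}/(j+1)=1/(n+1)$ with $n=-k$ collapses the sum to $-1/\bigl(c^{1-k}(1-k)\,j(\g,z)\bigr)$. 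Subtracting, the two pieces combine into $\frac{1}{c^{1-k}(1-k)}\bigl(j(\g,z)^{1-k}+1/j(\g,z)\bigr)$, and restoring the integral term gives \e{transi}.

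The only real work is the convergence and contour bookkeeping in the middle step: one must verify that the regrouping genuinely splits everything into individually convergent integrals --- the super-exponential decay at $-d/c$ (coming from $\Im(\g w)\to\infty$) and the $O(|w|^{-2})$ decay at $i\infty$ are exactly what make this legitimate --- and one must keep the parity conventions straight ($-k$ a positive even integer, $1-k$ odd, $2-k$ an even integer $\gqs 4$) when evaluating the elementary integrals.
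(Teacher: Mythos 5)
Your proof is correct and follows essentially the same route as the paper's: both start from the Eichler integral representation \e{inz}, change variables via the cocycle identity to move the contour to one from $\g^{-1}(i\infty)$ to $z$, regroup so that the difference of integrals becomes the convergent integral over $[\g^{-1}(i\infty),i\infty]$ plus two elementary integrals, and justify convergence at the cusp using $E_{2-k}(w)-j(\g,w)^{k-2}=j(\g,w)^{k-2}(E_{2-k}(\g w)-1)$. The only difference is that you carry out the evaluation of the two elementary integrals explicitly (correctly), where the paper leaves that final computation implicit.
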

\begin{proof}
Recall the identity
\begin{equation*}
  j(\g,z)(w-\g z) = j(\g^{-1},w)(\g^{-1}w-z)
\end{equation*}
for all $z,w\in \H$ and any invertible $\g$. Then
\begin{align*}
 \frac{2\pi i}{(1-k)\zeta(2-k)} j(\g,z)^{-k} U_k(\g z) & =  \int_{i\infty}^{\g z} (E_{2-k}(w)-1)\left[ j(\g,z)(w-\g z)\right]^{-k} \, dw \\
   &  =  \int_{i\infty}^{\g z} (E_{2-k}(w)-1)\left[ j(\g^{-1},w)(\g^{-1}w-z) \right]^{-k} \, dw \\
 &  =  \int_{\g^{-1}(i\infty)}^{z} (E_{2-k}(\g u)-1)\left[ j(\g^{-1},\g u)(u-z) \right]^{-k} j(\g^{-1},\g u)^2 \, du \\
&  =  \int_{\g^{-1}(i\infty)}^{z} \left(E_{2-k}(u)-\frac{1}{j(\g,u)^{2-k}} \right)(u-z)^{-k} \, du.
\end{align*}
Rewrite this last integral as
\begin{multline*}
  \frac{2\pi i}{(1-k)\zeta(2-k)}U_k(z) +\int^{i \infty}_{\g^{-1}(i \infty)}\left(E_{2-k}(w)-1-\frac 1{j(\g,w)^{2-k}} \right) (w-z)^{-k}\, dw
\\
-\int_{i\infty}^{z} \frac{(w-z)^{-k}}{j(\g,w)^{2-k}}\, dw + \int_{\g^{-1}(i\infty)}^{z} (w-z)^{-k}\, dw
\end{multline*}
and we obtain the result. Note that  the Fourier expansion \e{eisk} implies  $E_{2-k}(z)=1+O(e^{-2\pi y})$ as $y=\Im(z) \to \infty$. Also if $u\to \g^{-1}(i\infty)$ then we may write $u=\g z$ with $\Im(z) \to \infty$. As a consequence of the automorphy of $E_{2-k}$,
\begin{equation*}
  E_{2-k}(u) = j(\g,u)^{k-2}(1+O(e^{-2\pi y})) \quad \text{as} \quad u \to \g^{-1}(i\infty).
\end{equation*}
It follows that all the integrals in the proof are absolutely convergent.
\end{proof}

Letting $\g = S$ in \e{transi} produces
\begin{multline} \label{transi2}
  z^{-k} U_k\left(\frac{-1}{z}\right) - U_k(z) = \frac{(1-k)\zeta(2-k)}{2\pi i}\Bigg\{\frac 1{1-k}\left(z^{1-k}+\frac 1{z} \right)
\\
  +\int^{i \infty}_{0}\left(E_{2-k}(w)-1-\frac 1{w^{2-k}} \right) (w-z)^{-k}\, dw\Bigg\}.
\end{multline}
Except for a term $1/z$, the right side of \e{transi2} is a polynomial in $z$ and as in \cite[Sect. 2]{Za91} we may compute it explicitly.

\begin{prop} \label{cos}
For $z\in \H$ and even $k\lqs -2$,
\begin{equation*}
   2\left(z^k U_k(z)
- U_k(-1/z) \right)
  = (2\pi i)^{1-k} \sum_{\substack{u,v \in \Z_{\gqs 0} \\u+v=1-k/2}}  \frac{B_{2u}}{(2u)!}\frac{B_{2v}}{(2v)!} z^{1-2v}
+
     \left( 1-z^k\right)\zeta(1-k).
\end{equation*}
\end{prop}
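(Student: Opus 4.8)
The plan is to deduce the proposition from the transformation formula \eqref{transi2} (the case $\g=S$ just established) by reducing it to an explicit evaluation of the period integral
\[
  I(z):=\int_0^{i\infty}\Bigl(E_{2-k}(w)-1-w^{k-2}\Bigr)(w-z)^{-k}\,dw .
\]
Multiplying \eqref{transi2} through by $-z^k$ (using $(-1/z)^k=z^{-k}$, since $k$ is even) and then by $2$, the asserted identity takes the form
\[
  2\bigl(z^k U_k(z)-U_k(-1/z)\bigr)
  = -\frac{\zeta(2-k)}{\pi i}\bigl(z+z^{k-1}\bigr)-\frac{(1-k)\,\zeta(2-k)}{\pi i}\,z^k I(z),
\]
so the whole task is to show that this right side equals $(2\pi i)^{1-k}\sum_{u+v=1-k/2}\tfrac{B_{2u}}{(2u)!}\tfrac{B_{2v}}{(2v)!}z^{1-2v}+(1-z^k)\zeta(1-k)$; equivalently, to compute $I(z)$ in closed form.

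I would first note that $I(z)$ is a polynomial in $z$ of degree at most $-k$. Indeed $(w-z)^{-k}=(w-z)^{|k|}$ is a polynomial of degree $|k|$ in $z$; moreover the integrand stays bounded as $w\to 0$, where $E_{2-k}(w)-1-w^{k-2}=w^{k-2}\bigl(E_{2-k}(-1/w)-1\bigr)-1\to-1$, and is $O(w^{k-2})$ as $w\to i\infty$, so that $\int_0^{i\infty}\bigl|E_{2-k}(w)-1-w^{k-2}\bigr|\,(1+|w|)^{-k}\,|dw|<\infty$; hence one may differentiate $1-k$ times under the integral sign to get $\partial_z^{\,1-k}I(z)=0$. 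To pin down this polynomial I would carry out the classical period–polynomial computation for the Eisenstein series $E_{2-k}$ (recall $k\le -2$, so $2-k\ge 4$), following \cite[Sect. 2]{Za91}. Split $\int_0^{i\infty}=\int_0^i+\int_i^{i\infty}$ and apply $w\mapsto-1/w$ to the first piece; using $E_{2-k}(-1/w)=w^{2-k}E_{2-k}(w)$ and $(-1/w)^{k-2}=w^{2-k}$ one folds $I(z)$ into
\[
  I(z)=\int_i^{i\infty}\Bigl(E_{2-k}(w)-1-w^{k-2}\Bigr)\Bigl[(w-z)^{-k}-(1+wz)^{-k}\Bigr]\,dw ,
\]
an integral over a region where $w$ is bounded away from $0$. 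There $E_{2-k}(w)-1=\zeta(2-k)^{-1}\sum_{c\geq1}\sum_{d\in\Z}(cw+d)^{-(2-k)}$ is an absolutely convergent Eisenstein series (its $d=0$ terms summing precisely to $w^{k-2}$), so one integrates term by term; each integral is elementary via the antiderivative $\bigl(\tfrac{w-z}{cw+d}\bigr)^{1-k}$, and after a partial‑fraction expansion in $d$ the conditionally convergent inner sums $\sum_{d\neq0}(cz+d)^{-1}=\pi\cot(\pi cz)-(cz)^{-1}$ recombine so that all $\pi\cot$‑contributions cancel. What survives is a finite sum of terms $\zeta(2u)\zeta(2v)\,z^{1-2v}$ over $u,v\geq0$ with $u+v=1-k/2$, together with a boundary contribution from the endpoint $w=i$ that is proportional to $\zeta(1-k)$.

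The last step is arithmetic: Euler's formula $\zeta(2u)=\tfrac{(-1)^{u+1}B_{2u}(2\pi)^{2u}}{2\,(2u)!}$ (which also gives $\zeta(0)=-\tfrac12$ via $B_0=1$) turns each product $\zeta(2u)\zeta(2v)$ with $u+v=1-k/2$ into $\tfrac{(-1)^{1-k/2}(2\pi)^{2-k}}{4}\cdot\tfrac{B_{2u}B_{2v}}{(2u)!(2v)!}$, which together with the overall factor $\tfrac{2}{\pi i}$ in the reduced identity yields exactly the coefficient $(2\pi i)^{1-k}$; the ``endpoint'' terms $u=0$ and $v=0$, evaluated with $\zeta(0)=-\tfrac12$, reproduce the elementary $z+z^{k-1}$ coming from $\tfrac{1}{1-k}(z^{1-k}+z^{-1})$ in \eqref{transi2}, and the $w=i$ boundary term supplies $(1-z^k)\zeta(1-k)$. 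The main obstacle I expect is precisely the bookkeeping in the middle paragraph: the integrand carries two distinct regularizing subtractions — the constant $1$ forced by convergence at $i\infty$ and $w^{k-2}$ forced at $0$ — and the Eisenstein summation is only conditionally convergent and does not commute with integration near $w=0$, which is why the folding at $w=i$ is essential and why the ``anomalous'' $\zeta(1-k)$ term appears at all; tracking every sign, every power of $2\pi i$, and this boundary term is the delicate part. A convenient sanity check is $k=-2$, where the method gives $I(z)=-\tfrac53 z+\tfrac{30\,\zeta(3)}{i\pi^3}\bigl(z^2-1\bigr)$ and hence, after doubling, the identity \eqref{z3}.
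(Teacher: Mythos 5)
Your reduction of the proposition to a closed-form evaluation of $I(z)$, the degree bound on $I(z)$, the folding of the integral at $w=i$ via $w\mapsto -1/w$, the matching of the $u=0$ and $v=0$ Bernoulli terms with the elementary $\tfrac{1}{1-k}(z^{1-k}+z^{-1})$ piece of \eqref{transi2}, and the $k=-2$ sanity check are all correct (the last agrees with what \eqref{tar2} gives and with \eqref{z3}). But your route to $I(z)$ is genuinely different from the paper's, and it stops being a proof exactly where the two routes diverge. The paper never folds at $i$: it expands $(w-z)^{-k}$ binomially as in \eqref{tar}, identifies each moment $\int_0^\infty\bigl(E_n(iy)-1-(iy)^{-n}\bigr)y^r\,dy$ with the completed $L$-value $L^*(E_n,r+1)$ via the doubly subtracted Mellin representation \eqref{wer}, and evaluates $L^*(E_n,r)=c_n(r-1)!(2\pi)^{-r}\zeta(r)\zeta(r+1-n)$ from the absolutely convergent Dirichlet series of $\sigma_{n-1}$, using the functional equation \eqref{lek} only for the edge value $L^*(E_n,1)$; the $\zeta(1-k)$ contribution is precisely the pair of edge values $L^*(E_n,1)$ and $L^*(E_n,n-1)$, i.e.\ the coefficients of $z^{0}$ and $z^{k}$, and no conditionally convergent rearrangement ever occurs. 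Your alternative --- term-by-term integration of the $(c,d)$-expansion using the antiderivative $\bigl(\tfrac{w-z}{cw+d}\bigr)^{1-k}$, followed by partial fractions and cotangent sums --- is a legitimate classical path, but the sentence asserting that ``the conditionally convergent inner sums recombine so that all $\pi\cot$-contributions cancel'' and that what survives is exactly $\sum\zeta(2u)\zeta(2v)z^{1-2v}$ plus a $w=i$ boundary term proportional to $\zeta(1-k)$ \emph{is} the entire content of the proposition, and you assert it rather than derive it: the endpoint evaluations at $w=i\infty$ and $w=i$ individually produce $d$-sums that diverge or converge only conditionally, so the regrouping must be justified pair by pair, and the provenance you assign to the $\zeta(1-k)$ term is a guess. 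That is a genuine gap in the write-up, even though the strategy is sound; the paper's $L$-function route is the cleaner way to extract the same polynomial because it sidesteps exactly this rearrangement issue.
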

\begin{proof}
With $n=2-k \gqs 4$, the integral on the right of \e{transi2} is
\begin{equation} \label{tar}
  \sum_{r=0}^{n-2} \binom{n-2}{r}(-z)^{n-2-r} i^{r+1} \int_0^\infty \left( E_n(iy)-1-\frac{1}{i^n y^n}\right) y^r\, dy.
\end{equation}
Let $c_n:=(2\pi i)^n/(\G(n)\zeta(n))$. For $\Re(s)$ large enough, the completed $L$-function associated to $E_n$ is
\begin{equation*}
  L^*(E_n,s):=\int_0^\infty \left( E_n(it)-1\right) t^{s-1}\, dt = \frac{c_n \G(s)}{(2\pi)^s} \sum_{m = 1}^\infty \frac{\sigma_{n-1}(m)}{m^s}.
\end{equation*}
Rewriting as
\begin{equation} \label{leksfe}
  L^*(E_n,s) = \int_1^\infty \left( E_n(it)-1\right) t^{s-1}\, dt + \int_0^1 \left( E_n(it)-\frac{1}{i^n t^n}\right) t^{s-1}\, dt -\frac{1}{s}-\frac{1}{i^n(n-s)}
\end{equation}
shows that $L^*(E_n,s)$ has a meromorphic continuation to all $s\in \C$ with poles only at $s=0,n$. Combining $E_n(i/y)=i^ny^n E_n(iy)$ with \e{leksfe} also gives the functional equation
\begin{equation}\label{lek}
  L^*(E_n,n-s) = (-1)^{n/2} L^*(E_n,s).
\end{equation}
For $0<\Re(s)<n$ we see that
\begin{equation}\label{wer}
  \int_0^\infty \left( E_n(iy)-1-\frac{1}{i^n y^n}\right) y^{s-1}\, dy = L^*(E_n,s)
\end{equation}
since both sides agree with the right side of \e{leksfe}. Therefore \e{tar} equals
\begin{equation} \label{tar2}
  \sum_{r=0}^{n-2} \binom{n-2}{r}(-z)^{n-2-r} i^{r+1} L^*(E_n,r+1).
\end{equation}
For $\Re(s)$ large,
\begin{equation*}
  \sum_{m = 1}^\infty \frac{\sigma_{n-1}(m)}{m^s} = \sum_{d=1}^\infty \sum_{r=1}^\infty \frac{d^{n-1}}{(rd)^s} = \zeta(s)\zeta(s+1-n).
\end{equation*}
Recall that $\zeta(1-n)=(-1)^{n+1}B_n/n$ for $n\in \Z_{\gqs 1}$. This means $\zeta(0)=-1/2$ and $\zeta(n)=0$ for all negative even $n$.
We have $$L^*(E_n,n-1)=c_n (n-2)! (2\pi)^{1-n} \zeta(n-1)\zeta(0)$$ and by \e{lek}, $L^*(E_n,1)$ is $(-1)^{n/2}$ times this value. For integers $r$ with $2\lqs r\lqs n-2$,
\begin{equation*}
  L^*(E_n,r)=c_n (r-1)! (2\pi)^{-r} \zeta(r)\zeta(r+1-n)
\end{equation*}
and this vanishes for odd $r$. For even $r$  in this range
\begin{equation*}
  L^*(E_n,r)= (-1)^{n+r}c_n \frac{(r-1)!}{(2\pi)^{r}} \frac{(2\pi)^r |B_r|}{2\cdot r!}\frac{B_{n-r}}{(n-r)!}.
\end{equation*}
Assembling these calculations and simplifying completes the proof.
\end{proof}
Proposition \ref{cos} confirms \e{bern2} for even $k\lqs -2$. For even $k\gqs 2$, the identity \e{bern2} follows from the fact that $\ei_k(z)$ has holomorphic weight $k$. The weight $k=0$ case may be shown with the Kronecker limit formula as in \cite[Sect. 3]{DIT}, for example.

As discussed in \cite[pp. 466-467]{Pr00} and \cite[Remark 5.2]{BS17}, (and returning to $k$ being even and negative), adding a $(1-k)$ fold integral of the constant term of $E_{2-k}(z)$ to $U_k(z)$ gives it a neater transformation property. Under the action of $S=\left(\smallmatrix 0 & -1 \\ 1 & 0
\endsmallmatrix\right)$ it will now transform with weight $k$ up to a polynomial instead of a rational function -- this is the period polynomial of  $E_{2-k}(z)$ with respect to $S$.

These ideas generalize and the period polynomial with respect to $S$ of any modular form $f$ has coefficients that may be expressed in terms of $L^*(f,r)$ with integer $r$ satisfying $1\lqs r \lqs k-1$. This is due to work of Grosswald, Razar,  Weil and others; see Sections 4 and 5 of \cite{Mur11} or Section 5 of \cite{BS17} and the contained references. There is also a nice detailed treatment in Sections 11.5-11.7 of \cite{CS17}, though without references.

\subsection{The non-holomorphic part of $\ei_k(z)$} \label{nhei}
As in \e{jkz}, write the non-holomorphic series in \e{negk} as
\begin{equation} \label{jkz2}
   V_k(z):= \sum_{m=1}^\infty \sigma_{k-1}(m) e^{-2\pi i m \overline{z}}\sum_{u=0}^{-k}  \frac{(4\pi m y)^{u}}{u!}.
\end{equation}
It is identically zero for $k$ positive.
The finite sum in \e{jkz2} may be expressed, for $x>0$ and $m\in \Z_{\gqs 0}$, as
\begin{align}\label{pbk}
  \sum_{u=0}^m \frac{x^u}{u!} & = \frac{x^{m+1}}{m!}  \int_0^\infty (t+1)^m e^{-xt}\, dt, \\
   & = \frac{e^x}{m!}  \int_x^\infty t^{m} e^{-t}\, dt = \frac{e^x}{m!}  \G(m+1,x). \label{pbk2}
\end{align}
Using \e{pbk} or \e{pbk2} we obtain a similar identity to \e{inz},
\begin{equation*}
  V_k(z) = \frac{(1-k)\zeta(2-k)}{2\pi i}\int_{i\infty}^{-\overline{z}} (E_{2-k}(w)-1)(w+z)^{-k} \, dw
\end{equation*}
for even $k \lqs 0$.
This is equivalent to the expression in \cite[p. 466]{Pr00}. With \e{pbk2} we may also write
\begin{equation*}
  V_k(z) = \sum_{m=1}^\infty \frac{\sigma_{k-1}(m)}{|k|!} e^{-2\pi i m z} \cdot \G(1-k,4\pi my).
\end{equation*}
In this formulation using the incomplete $\G$ function, Theorem \ref{andr} matches the general Fourier development expected for a harmonic Maass form with polynomial growth at the cusps, as given in \cite[Lemma 4.4]{LR16} for example.

\begin{proof}[Proof of Theorem \ref{jkt}] We have
$
  z^k \ei_k(z)
- \ei_k(-1/z) = 0
$
since $\ei_k(z)$ has holomorphic weight $k$.
Expressing this relation with the Fourier expansion from Theorem \ref{andr} shows
\begin{multline} \label{sno}
  2\left(z^k V_k(z)
- V_k(-1/z) \right)
  =
\frac{2\zeta(2-k)}{(2\pi i)^k} \left( \frac y\pi\right)^{1-k}\left(|z|^{2k-2}-z^k \right)
\\
+2\frac{\zeta(1-k)}{\varepsilon(k)}\left(1- z^k\right)- 2\left(z^k U_k(z)
- U_k(-1/z) \right)
\end{multline}
for all nonzero $k\in 2\Z$. Then substituting \e{bern2} into \e{sno} and simplifying gives the result when $k\neq 0$. For $k=0$ we have
\begin{equation} \label{slal}
  2\left( U_0(z)
- U_0(-1/z) \right) = \pi i(z+1/z)/6-\pi i/2+\log z
\end{equation}
by \e{bern2}. Clearly $V_0(z)=\overline{U_0(z)}$ and so, by conjugating \e{slal},
\begin{equation} \label{slal2}
  2\left( V_0(z)
- V_0(-1/z) \right) = -\pi i(\overline z+1/\overline z)/6+\pi i/2+\overline{\log z}.
\end{equation}
Verify that the theorem gives \e{slal2} for $k=0$.
\end{proof}

Letting $z=i$ and $k=2-2h$ in Theorem \ref{jkt} gives the following companion identity to \e{lerch}
\begin{multline}\label{lerchxx}
  \zeta(2h-1) = \frac{(4\pi)^{2h-1}|B_{2h}|}{(2h)!}+\frac{(2\pi)^{2h-1}}{2} \sum_{\substack{u,v \in \Z_{\gqs 0} \\u+v=h}} (-1)^u \frac{B_{2u}}{(2u)!}\frac{B_{2v}}{(2v)!}\\
 -2\sum_{m=1}^\infty \sigma_{1-2h}(m) e^{-2\pi m}\sum_{u=0}^{2h-2} \frac{(4\pi m)^u}{u!},
\end{multline}
for even $h \gqs 2$. The Bernoulli numbers sum appears with opposite signs in  \e{lerch} and \e{lerchxx} so that adding them gives the simpler identities \e{lerchxxx} and \e{lerchxxx2} we saw in the introduction. It may easily be checked that Proposition \ref{bigb} for $k=2h-2$ also gives \e{lerchxxx} and \e{lerchxxx2}.

\subsection{Ramanujan polynomials}
Alternatively, we may obtain the Bernoulli numbers sum without the $\zeta(2h-1)$ term by taking the difference of \e{lerch} and \e{lerchxx}. More generally we may consider the polynomials
\begin{equation*}
 \mathcal R_k(z) := \sum_{\substack{u,v \in \Z_{\gqs 0} \\u+v=1-k/2}}  \frac{B_{2u}}{(2u)!}\frac{B_{2v}}{(2v)!} z^{2u}
\end{equation*}
appearing in \e{bern2} and named the {\em Ramanujan polynomials} in \cite{Mur11,MSW11}. We continue our practice of using the weight $k$ as the  index; $\mathcal R_k(z)$ corresponds to $R_{1-k}(z)$ in the cited papers. The simplest nonzero polynomials are $\mathcal R_2(z)=1$, $\mathcal R_0(z)=(1+z^2)/12$ and $\mathcal R_{-2}(z)=(-1+5z^2-z^4)/720$. Detailed properties of the zeros of $\mathcal R_k(z)$ are proved in \cite{MSW11} including: for even $k \lqs 0$ all nonreal zeros lie on the unit circle and, for even $k \lqs -2$, there are exactly four real zeros and they lie in the interval $[-2.2,2.2]$.

A consequence of \e{bern2} is that for negative even $k$
\begin{equation}\label{rmz}
  (2\pi i)^{1-k} \frac{\mathcal R_k(z)}{z(z^{-k}-1)} + \zeta(1-k) = \frac{2\left(U_k(z)-z^{-k}U_k(-1/z) \right)}{z^{-k}-1}.
\end{equation}
They further show in  \cite{MSW11} that for each even $k\lqs -8$, there is a zero $\alpha$ of $\mathcal R_k(z)$ with $\alpha \in \H$, $|\alpha|=1$ and $\alpha^{-k}\neq 1$. Hence  $\zeta(1-k)$ equals the right hand side of \e{rmz} evaluated at the algebraic number $\alpha$. This gives circumstantial evidence that $\zeta(1-k)$ is transcendental because  \cite[Thm. 2.1]{Mur11} says that as $\beta \in \H$ runs over all algebraic numbers, with $\beta^{-k}\neq 1$, the right  of \e{rmz} evaluated at  $\beta$ has an  algebraic value at most once. This unique algebraic value would have to be $\zeta(1-k)$ for $\zeta(1-k)$ to be algebraic.

Theorem \ref{jkt} gives a new expression for the Ramanujan polynomials in terms of the non-holomorphic components $V_k(z)$. Similarly to \e{rmz} we may write, for even $k<0$,
\begin{multline}\label{rmz2}
  -(2\pi i)^{1-k} \frac{\mathcal R_k(z)}{z(z^{-k}-1)} + \zeta(1-k) = \frac{2\left(V_k(z)-z^{-k}V_k(-1/z) \right)}{z^{-k}-1}
  \\ +\frac{B_{2-k}}{2(2-k)!}(4\pi y)^{1-k}\frac{z^{-k}|z|^{2k-2}-1}{z^{-k}-1}.
\end{multline}
Further results and conjectures related to the Ramanujan polynomials are discussed in \cite[Sect. 7]{BS17}.

\section{An inner product formula} \label{inner}
For two functions $f$ and $g$ on $\H$ with holomorphic weight $k$, their Petersson inner product is
\begin{equation}\label{pip}
 \s{f}{g} :=\int_{\GH} y^{k} f(z) \overline{g(z)} \, \frac{dx dy}{y^2}.
\end{equation}
This converges, for example, if   one of the functions has exponential decay at infinity and one has at most polynomial growth.
The following proposition links  a convolution $L$-series with the inner product of a cusp form $f$ and a product of Eisenstein series. This is an important step in the method of Diamantis and the author in \cite{DO10} and we give a new direct proof here based on Theorem \ref{allk2}.

\begin{prop} \label{RankinS} Let $f(z)=\sum_{n=1}^\infty a_f(n) e^{2\pi i nz}$ be a weight $k$ cusp form  and suppose $k_1, k_2 \in
2 \mathbb Z$
satisfy $k_1+k_2=k$ and $k_2 \geqslant 0$. Then for all $u, v \in \mathbb C$ satisfying
\begin{equation}\label{sineq}
  -\Re(u)-k/2+1/2<|\Re(v)-1/2|<\Re(u)-k_2/2-7/2
\end{equation}
we have
\begin{multline} \label{olym} \Bigl\langle f, y^{-k/2}E^*_{k_1}(\cdot, \overline u)
E^*_{k_2}(\cdot, \overline v) \Bigr\rangle \\
=
\left[(-1)^{k_2/2} 2 \pi^{k/2} \zeta(2u) \frac{\G(|k_1|/2+u)}{\G(k_1/2+u)} \frac{ \G(s)
\G(w)}{(2 \pi)^{s+w}} \right]\sum_{n=1}^{\infty}\frac{a_f(n) \sigma_{2v-1}(n)}{n^{w}}
\end{multline}
 where
\begin{equation}\label{sw}
s=u-v+k/2, \quad w=u+v+k/2-1.
\end{equation}
\end{prop}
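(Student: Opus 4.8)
The plan is to run the classical Rankin--Selberg unfolding argument, using \e{connj} to remove the complex conjugations and Theorem \ref{allk2} (in its Whittaker form \e{whyt}) to identify the needed Fourier coefficient; essentially all of \e{sineq} is consumed in the convergence bookkeeping.

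First I would rewrite the inner product \e{pip}. Since $\overline{y^{-k/2}}=y^{-k/2}$ and $\overline{E^*_{k_i}(z,\overline s)}=E^*_{-k_i}(z,s)$ by \e{connj}, we get
\[
  \Bigl\langle f,\, y^{-k/2}E^*_{k_1}(\cdot,\overline u)E^*_{k_2}(\cdot,\overline v)\Bigr\rangle
  =\int_{\GH} y^{k/2}\, f(z)\, E^*_{-k_1}(z,u)\, E^*_{-k_2}(z,v)\,\frac{dx\,dy}{y^2},
\]
and a weight count shows the integrand is $\G$-invariant: $y^{k/2}f$ has non-holomorphic weight $k$, and the two Eisenstein factors contribute weights $-k_1$ and $-k_2$, summing to $-k$. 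Because the right inequality of \e{sineq} forces $\Re(u)$ large, $E^*_{-k_1}(z,u)=\theta_{k_1}(u)\sum_{\g\in\G_\ci\backslash\G}\bigl(j(\g,z)/|j(\g,z)|\bigr)^{k_1}\Im(\g z)^u$ converges as in \e{defe}, while $y^{k/2}f(z)E^*_{-k_2}(z,v)$ transforms with non-holomorphic weight $k_1$; hence unfolding this Eisenstein series collapses $\GH$ to the strip $\{0\lqs x\lqs 1,\ y>0\}$ and yields
\[
  \theta_{k_1}(u)\int_0^\infty\!\!\int_0^1 y^{k/2+u-2}\, f(z)\, E^*_{-k_2}(z,v)\,dx\,dy .
\]
Justifying the termwise unfolding and the absolute convergence of this integral is the step that takes the most care: it uses both halves of \e{sineq}, the growth estimates of Corollary \ref{bnthc} for $E^*_{-k_2}(z,v)$ as $y\to 0^+$ and as $y\to\infty$, and the exponential decay of the cusp form $f$.

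Next I would carry out the $x$-integral. Writing $f(z)=\sum_{n\gqs 1}a_f(n)e^{-2\pi ny}e^{2\pi inx}$ and inserting the Fourier expansion \e{ekzs2b} of $E^*_{-k_2}(z,v)$, the integration over $x\in[0,1]$ picks out, for each $n\gqs 1$, the coefficient of $e^{-2\pi inx}$ (the $m=-n$ term) in $E^*_{-k_2}(z,v)$. By Theorem \ref{allk2} this coefficient is $2^{-k_2}\sigma_{2v-1}(n)\,n^{-v}\sum_{r=-k_2/2}^{k_2/2}P^{k_2/2}_r(4\pi ny)\,W_{0,v+r-1/2}(4\pi ny)$ (the sign factor $(m/|m|)^r$ and the negative-index convention for $P^{k/2}_r$ in \e{pnr} cancel each other), and \e{whyt} with weight $k_2\gqs 0$ --- for which the gamma-quotient there equals $1$ --- shows this is $(-1)^{k_2/2}\sigma_{2v-1}(n)\,n^{-v}\,W_{k_2/2,\,v-1/2}(4\pi ny)$ (the same conclusion is immediate from Maass's expansion \e{prb2}). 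The inner product is thereby reduced to
\[
  (-1)^{k_2/2}\,\theta_{k_1}(u)\sum_{n=1}^\infty a_f(n)\frac{\sigma_{2v-1}(n)}{n^v}\int_0^\infty y^{k/2+u-2}e^{-2\pi ny}\,W_{k_2/2,\,v-1/2}(4\pi ny)\,dy .
\]

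Finally the $y$-integral is a classical Mellin transform of a Whittaker function: substituting $t=4\pi ny$ turns it into $(4\pi n)^{-(k/2+u-1)}\int_0^\infty t^{\nu-1}e^{-t/2}W_{\lambda,\mu}(t)\,dt$ with $\nu=k/2+u-1$, $\mu=v-1/2$, $\lambda=k_2/2$, and the standard evaluation $\int_0^\infty t^{\nu-1}e^{-t/2}W_{\lambda,\mu}(t)\,dt=\G(\nu+\mu+1/2)\G(\nu-\mu+1/2)/\G(\nu-\lambda+1)$ applies; its convergence condition $\Re(\nu)>|\Re(\mu)|-1/2$ is exactly the left inequality of \e{sineq}. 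Using \e{sw} and $k_1+k_2=k$ one checks $\nu+\mu+1/2=w$, $\nu-\mu+1/2=s$, $\nu-\lambda+1=k_1/2+u$ and $k/2+u-1=w-v$, so the $n$-dependence becomes $n^{-v}\cdot n^{v-w}=n^{-w}$, producing $\sum_n a_f(n)\sigma_{2v-1}(n)/n^w$ together with a constant $(4\pi)^{v-w}$. It then remains only to collect the elementary constants: with $\theta_{k_1}(u)=\pi^{-u}\G(u+|k_1|/2)\zeta(2u)$ from \e{pol}, the factor $\theta_{k_1}(u)/\G(k_1/2+u)$ supplies $\zeta(2u)\,\G(|k_1|/2+u)/\G(k_1/2+u)$, and balancing the powers of $2$ and $\pi$ (using $s+w=2u+k-1$) converts the leftover $\pi^{-u}(4\pi)^{v-w}$ into $2\pi^{k/2}(2\pi)^{-s-w}$, which is precisely the bracketed factor of \e{olym}. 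Modulo the convergence verification in the second paragraph, everything after the unfolding is a chain of known evaluations and arithmetic simplifications.
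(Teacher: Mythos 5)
Your proposal is correct, and while the skeleton (conjugating via \e{connj}, unfolding $E^*_{-k_1}(\cdot,u)$, integrating out $x$ to isolate the $m=-n$ Fourier coefficient of $E^*_{-k_2}(\cdot,v)$, then taking a Mellin transform in $y$) is identical to the paper's, the computational core is genuinely different and substantially shorter. The paper inserts the $P^{k_2/2}_r$--Bessel form of the coefficient from Theorem \ref{allk2}, evaluates each $\int_0^\infty e^{-t}t^{\ast}K_{v+r-1/2}(t)\,dt/t$ separately, and then must collapse the resulting double sum over $r$ and $\ell$ using two nontrivial identities (the $\G$-convolution identity \e{gam} and a combinatorial lemma proved by a polynomial-identity argument); you instead recognize the whole coefficient as a single Whittaker function $(-1)^{k_2/2}\sigma_{2v-1}(n)n^{-v}W_{k_2/2,v-1/2}(4\pi ny)$ via \e{whyt} (equivalently Maass's \e{prb2}), after which one classical Mellin transform $\int_0^\infty t^{\nu-1}e^{-t/2}W_{\lambda,\mu}(t)\,dt=\G(\nu+\mu+\tfrac12)\G(\nu-\mu+\tfrac12)/\G(\nu-\lambda+1)$ delivers $\G(s)\G(w)/\G(k_1/2+u)$ in one stroke. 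I checked your parameter identifications ($\nu+\mu+\tfrac12=w$, $\nu-\mu+\tfrac12=s$, $\nu-\lambda+1=k_1/2+u$), the cancellation of $(m/|m|)^r$ against the negative-index convention for $P^n_r$, and the final bookkeeping of powers of $2$ and $\pi$ using $s+w=2u+k-1$: all are right. What your route buys is the elimination of the two combinatorial lemmas; what it costs is reliance on the tabulated Whittaker Mellin transform rather than the more elementary $K$-Bessel one, which is a fair trade given the paper itself leans on a cited Bessel integral. The convergence discussion you defer is handled in the paper exactly as you indicate (Hecke's bound plus Corollary \ref{bnthc}), so nothing essential is missing; note only that both you and the paper read the left inequality of \e{sineq} as the condition $\Re(s),\Re(w)>0$ needed for the Mellin transform, which is the evident intent of the statement.
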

\begin{proof} Note that the right inequality in \e{sineq} implies $\Re(u)>7/2$ and the left inequality  implies  $\Re(s)>0$ and $\Re(w)>0$. With \e{connj}, the left side of \e{olym} is
\begin{equation} \label{curl}
  \int_{\GH} y^{k/2} f(z) E^*_{-k_1}(z,  u)
E^*_{-k_2}(z,  v) \, \frac{dx dy}{y^2}.
\end{equation}
For $\Re(u)>1$ we may unfold $E_{-k_1}(\cdot,  u)$ in \e{curl} using its series expansion (\ref{defe}) to produce
\begin{equation}\label{int}
\theta_{k_1}(u)\int_0^\infty \int_0^1 y^{k/2+u-2} f(z) E^*_{-k_2}(z, v) \, dxdy.
\end{equation}
We need conditions on $u$ and $v$ for \e{int} to converge. In fact we require the convergence of \e{int} when $f(z)$ and $E^*_{-k_2}(z, v)$ are replaced by their Fourier expansions, bounded termwise in absolute value.
Employing Hecke's bound $a_f(n) \ll n^{k/2}$ we have, with \e{bbgd},
\begin{equation*}
  f(z) \ll \sum_{n=1}^\infty n^{k/2} e^{-2\pi n y} \ll (1+y^{-k/2-1})e^{-2\pi y}.
\end{equation*}
Corollary \ref{bnthc} gives the bounds we need for $E^*_{-k_2}(z, v)$ and we see that the integrand in \e{int} is bounded by
\begin{equation*}
  y^{k/2+\Re(u)-2} \cdot y^{-k/2-1} \cdot y^{-|\Re(v)-1/2|-k_2/2-3/2} \quad \text{as} \quad y\to 0.
\end{equation*}
Hence the desired convergence of \e{int} is ensured by the right inequality of \e{sineq}.

 Writing the Fourier expansion of $E^*_{-k_2}(z, v)$ as $\sum_{m \in \Z} e_{-k_2}(m;y,v)  e^{2\pi i m x}$,
the expression \e{int} becomes
$$
\theta_{k_1}(u)\sum_{n=1}^\infty a_f(n) \int_0^\infty  e^{-2\pi ny} e_{-k_2}(-n;y,v) y^{k/2+u-2} \, dy
$$
after integrating with respect to $x$.
Theorem \ref{allk2} tells us that
$$
e_{-k_2}(-n;y,v) = \frac{1}{2^{k_2-1}} \frac{ \sigma_{2v-1}(n)}{n^v}  \sum_{r=-k_2/2}^{k_2/2} P^{k_2/2}_r\bigl(4\pi n y \bigr) (n y)^{1/2} K_{v+r-1/2}(2 \pi n y)
$$
and so we need the integral
\begin{multline*}
\int_0^\infty  e^{-2\pi ny} P^{k_2/2}_r (4\pi n y ) (ny)^{1/2} K_{v+r-1/2}(2 \pi ny) y^{k/2+u-2} \, dy\\
=\sum_{\ell=|r|}^{k_2/2} \psi^{k_2/2}_r(\ell)\int_0^\infty  e^{-2\pi ny} (-4\pi n y )^\ell (ny)^{1/2} K_{v+r-1/2}(2 \pi ny) y^{k/2+u-1} \, \frac{dy}{y}\\
=\sum_{\ell=|r|}^{k_2/2} \frac{(-2)^\ell \psi^{k_2/2}_r(\ell) }{(2\pi)^{1/2} (2\pi n)^{k/2+u-1}} \int_0^\infty  e^{-2\pi ny} (2\pi n y)^{k/2+u-1/2+\ell} K_{v+r-1/2}(2 \pi ny) \, \frac{dy}{y}
\end{multline*}
with $\psi^{k_2/2}_r(\ell)$ the coefficient of $(-x)^\ell$ in $P^{k_2/2}_r(x)$. This last integral equals
\begin{equation*}
  \int_0^\infty  e^{-t} t^{k/2+u-1/2+\ell} K_{v+r-1/2}(t) \, \frac{dt}{t}
  =\frac{2^{-k/2+1/2-u-\ell} \pi^{1/2} \G(s+\ell-r)\G(w+\ell+r)}{\G(k/2+u+\ell)}
\end{equation*}
provided $\Re(s)+\ell-r>0$ and $\Re(w)+\ell+r>0$; see for example \cite[p. 205]{IwSp}. These conditions hold since $\Re(s),$ $\Re(w)>0$.

Putting our results together, simplifying and reordering the summation shows that the left side of \e{olym} equals
\begin{multline} \label{mul}
\frac{k_2! \theta_{k_1}(u)}{2^{k_2} (4\pi)^{k/2+u-1}}\sum_{n=1}^\infty \frac{a_f(n) \sigma_{2v-1}(n) }{n^{w}}
 \sum_{\ell=0}^{k_2/2}  \frac{( -1)^\ell }{\G(k/2+u+\ell) (k_2/2 -\ell)!} \\
 \times\sum_{r=-\ell}^\ell
 \frac{ \G(s+\ell-r)\G(w+\ell+r)}{(\ell-r)!(\ell+r)!}.
\end{multline}
The general binomial theorem implies that
$$
\sum_{j=0}^\infty \frac{\G(a+j)}{\G(a) j!} x^j =\frac{1}{(1-x)^a}
$$
for all $|x|<1$ and  $a>0$. Hence, expanding both sides of $(1-x)^{-a} (1-x)^{-b} = (1-x)^{-a-b}$ proves
\begin{equation}\label{gam}
\sum_{m+n=\ell} \frac{\G(a+m)\G(b+n)}{\G(a)\G(b)\ m! n!} = \frac{\G(a+b+\ell)}{\G(a+b)\ \ell!}
\end{equation}
and \e{gam} is valid for all $a,b \in \C$ by meromorphic continuation. Thus the innermost sum in (\ref{mul}) evaluates to
$
\G(s) \G(w) \G(s+w+2\ell)/(\G(s+w) (2\ell)!)
$
and (\ref{mul}) becomes
\begin{equation*}
  \frac{k_2! \theta_{k_1}(u)}{2^{k_2} (4\pi)^{k/2+u-1}}\frac{\G(s) \G(w)}{\G(s+w)}\sum_{n=1}^\infty \frac{a_f(n) \sigma_{2v-1}(n) }{n^{w}}
 \sum_{\ell=0}^{k_2/2} ( -1)^\ell \frac{ \G(s+w+2\ell) }{\G(k/2+u+\ell) (k_2/2 -\ell)! (2\ell)!}.
\end{equation*}
The next lemma  computes the above sum over $\ell$ (writing $s+w$ as $k-1+2u$).

\begin{lemma}
For even $k,k_2 \geqslant 0$ and all $u \in \C$
\begin{equation}\label{summ}
\sum_{\ell=0}^{k_2/2} ( -1)^\ell \frac{\G(k-1+2u+2\ell) }{\G(k/2+u+\ell) \ (k_2/2 -\ell)! \ (2\ell)!}=\frac{(2i)^{k_2}}{k_2!} \frac{\G(k-1+2u)}{\G(k/2-k_2/2 +u)}.
\end{equation}
\end{lemma}
\begin{proof}
Assume first that $u$ is a positive integer and let $d=k/2+u-1$. The left side of (\ref{summ}) is
$$
\sum_{\ell=0}^{k_2/2} ( -1)^\ell \frac{(2d+2\ell)!}{(2\ell)! (k_2/2-\ell)!(d+\ell)!} = \frac{(2d)!}{(k_2/2+d)!} \sum_{\ell} (-1)^\ell \binom{2d+2\ell}{2d} \binom{d+k_2/2}{d+\ell}.
$$
Replace $\ell$ by $\ell-d$ in this sum to get
$$
\frac{(-1)^d (2d)!}{(k_2/2+d)!} \sum_{\ell} (-1)^\ell \binom{2\ell}{2d} \binom{d+k_2/2}{\ell}
=\frac{(2i)^{k_2} (2d)!}{(d-k_2/2)! k_2!}=\frac{(2i)^{k_2} \G(k-1+2u)}{\G(k/2-k_2/2+u) \ k_2!},
$$
employing \e{comb3}.
Let $A(u)$ denote the left side of (\ref{summ}) and $B(u)$ the right side.  Now $ A(u) \G(u)/\G(2u)$ must be a rational function of $u$ and similarly for $B(u)$. Therefore
$$
\frac{\G(u)}{\G(2u)} A(u)=\frac{A_1(u)}{A_2(u)}, \qquad \frac{\G(u)}{\G(2u)} B(u)=\frac{B_1(u)}{B_2(u)}
$$
for polynomials $A_1,A_2,B_1,B_2$. We have demonstrated that
\begin{equation}\label{poly}
A_1(u)B_2(u) = A_2(u) B_1(u)
\end{equation}
for infinitely many integers $u$. Hence the left and right sides of (\ref{poly}) are identical and $A(u)=B(u)$ for all $u \in \C$, as required. 
\end{proof}
This completes the proof of Proposition \ref{RankinS}.
\end{proof}

A similar inner product to $\langle f, y^{-k/2}E^*_{k_1}(\cdot, \overline u)
E^*_{k_2}(\cdot, \overline v) \rangle$ for the group $\G(N)$ is computed in a different way in \cite[Prop. 2.5]{KR17}. This result is used there to prove relations among products of Eisenstein series that mirror the Manin relations.

 The $L$-function of $f$ is
$
L(f,s) := \sum_{n=1}^\infty a_f(n) n^{-s},
$
defined for $\Re(s)$ large.  The completed $L$-function,
\begin{equation}\label{lfs2}
L^*(f,s):=(2\pi)^{-s}\G(s) L(f,s) = \int_0^\infty f(iy) y^{s-1} \, dy,
\end{equation}
is now an analytic function for all $s \in \C$.
Suppose $f$ is a  Hecke eigenform that is normalized to have $a_f(1)=1$. Then the convolution $L$-series from \e{olym} satisfies
\begin{equation*}
  \sum_{n=1}^\infty \frac{a_f(n) \sigma_{2v-1}(n)}{n^w} = L(f,s) L(f,w)/\zeta(2u),
\end{equation*}
for $\Re(w)$ large enough, by comparing Euler products as in \cite[Eq. (2.11)]{DO10}. Therefore Proposition \ref{RankinS} and analytic continuation give the next corollary which is
\cite[Prop. 2.1]{DO10}.

\begin{cor} \label{iprodprop}
Let $k_1$, $k_2 \gqs 0$ be even with $k=k_1+k_2$ and $f$ a normalized Hecke eigenform of weight $k$. Then for all $u,v \in \C$ we have the following relation, with $s$ and $w$ given by \e{sw},
\begin{equation}\label{ziggg}
 (-1)^{k_2/2} \Bigl\langle f, y^{-k/2}E^*_{k_1}(\cdot, \overline u)E^*_{k_2}(\cdot, \overline v)\Bigr\rangle
= 2 \cdot \pi^{k/2} L^*(f,s) L^*(f,w).
\end{equation}
\end{cor}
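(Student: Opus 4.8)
The plan is to combine Proposition \ref{RankinS} with the Euler-product factorization of the Rankin--Selberg convolution, and then to remove the constraint \eqref{sineq} by analytic continuation in $(u,v)$. First, fix $(u,v)$ satisfying \eqref{sineq} and in addition with $\Re(u)$ large; this is possible, for instance by taking $\Re(v)=1/2$ and $\Re(u)>k_2/2+7/2$, so that $\Re(s)=\Re(w)=\Re(u)+k/2-1/2$ are both large and the Dirichlet series $L(f,s)$, $L(f,w)$ and $\sum_n a_f(n)\sigma_{2v-1}(n)n^{-w}$ converge absolutely. As recorded just before the statement (cf. \cite[Eq. (2.11)]{DO10}), the Hecke relations for the normalized eigenform $f$ then give
\[
\sum_{n=1}^\infty \frac{a_f(n)\sigma_{2v-1}(n)}{n^{w}} = \frac{L(f,s)L(f,w)}{\zeta(2u)}.
\]
Since $k_1\gqs 0$ here, the factor $\Gamma(|k_1|/2+u)/\Gamma(k_1/2+u)$ in \eqref{olym} equals $1$; substituting the displayed identity into \eqref{olym}, the two occurrences of $\zeta(2u)$ cancel and, recalling $(2\pi)^{-s}\Gamma(s)L(f,s)=L^*(f,s)$ from the definition \eqref{lfs2} (and likewise for $w$), the bracket collapses so that
\[
\Bigl\langle f, y^{-k/2}E^*_{k_1}(\cdot,\overline u)E^*_{k_2}(\cdot,\overline v)\Bigr\rangle = (-1)^{k_2/2}\,2\pi^{k/2}\,L^*(f,s)L^*(f,w).
\]
Multiplying by $(-1)^{k_2/2}$ and using $(-1)^{k_2}=1$ gives \eqref{ziggg} for all $(u,v)$ in this nonempty open subset of $\C^2$.

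It remains to propagate \eqref{ziggg} to all of $\C^2$ by analytic continuation. The right-hand side is entire in $(u,v)$: $s$ and $w$ are affine functions of $(u,v)$ by \eqref{sw}, and $L^*(f,\cdot)$ is entire, being the Mellin transform \eqref{lfs2} of the rapidly decreasing function $f(iy)$. For the left-hand side, use \eqref{connj} to rewrite it as $\int_{\GH} y^{k/2} f(z)\,E^*_{-k_1}(z,u)\,E^*_{-k_2}(z,v)\,\frac{dx\,dy}{y^2}$, exactly as in \eqref{curl}. On the fundamental domain $y$ is bounded below, so by Corollary \ref{bnthc} the product $E^*_{-k_1}(z,u)E^*_{-k_2}(z,v)$ grows at most polynomially as $y\to\infty$, while $f$ decays exponentially there; hence the integral converges absolutely, locally uniformly in $(u,v)$ away from the poles of the two Eisenstein factors, and so defines a holomorphic function of $(u,v)$ on $\C^2$ minus at most the hyperplanes $u\in\{0,1\}$ (present only if $k_1=0$) and $v\in\{0,1\}$ (present only if $k_2=0$), by Theorem \ref{allk2}. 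That complement is connected, so by the identity theorem for holomorphic functions of several variables the equality extends from the open set of the first paragraph to all of it, and since the right-hand side is entire the left-hand side continues to an entire function; thus \eqref{ziggg} holds for all $u,v\in\C$.

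The only genuinely technical point is the locally uniform absolute convergence of the inner-product integral and the resulting holomorphy in $(u,v)$, but this follows at once from Corollary \ref{bnthc} (polynomial growth of $E^*_k(z,s)$ as $y\to\infty$) together with the cuspidality of $f$; everything else is bookkeeping. In particular one must check that the region \eqref{sineq} is compatible with $\Re(s)$ and $\Re(w)$ being simultaneously large, which is what legitimizes the Euler-product step and the cancellation of $\zeta(2u)$ — the choice $\Re(v)=1/2$, $\Re(u)\to\infty$ suffices, as noted above.
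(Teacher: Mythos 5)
Your proof is correct and follows exactly the route the paper takes (the paper compresses it into one sentence after Proposition \ref{RankinS}): substitute the Euler-product identity $\sum_n a_f(n)\sigma_{2v-1}(n)n^{-w}=L(f,s)L(f,w)/\zeta(2u)$ into \eqref{olym} on the region where \eqref{sineq} holds and the Dirichlet series converge, cancel $\zeta(2u)$, and then extend to all of $\C^2$ by analytic continuation. Your filling-in of the continuation step — rewriting the inner product via \eqref{connj} as \eqref{curl}, invoking Corollary \ref{bnthc} for polynomial growth against the exponential decay of $f$, and applying the identity theorem on the connected complement of the possible polar hyperplanes — is a sound and welcome elaboration of what the paper leaves implicit.
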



\section{The kernel for products of $L$-functions} \label{proj}
Let $S_k$ be the $\C$-vector space of holomorphic cusp forms of weight $k$. We may choose a basis $\mathcal B_k$ of normalized Hecke eigenforms. For every $s,w \in \C$, the condition
\begin{equation*}
  \s{f}{H_{\overline s, \overline w}} = L^*(f,s) L^*(f,w) \qquad \text{for all} \qquad f \in \mathcal B_k,
\end{equation*}
uniquely defines the kernel $H_{s,w}$ as a cusp form in $S_k$.

Suppose we have $n \in \Z_{\gqs 0}$ and even $k_1,k_2 \gqs 4$   so that $k=k_1+k_2+2n$. Zagier in \cite[Sect. 5]{Za77} gave an explicit description of $H_{n+1,n+k_2}$ in terms of the Rankin-Cohen bracket $[E_{k_1}, E_{k_2}]_n$ of two Eisenstein series:
\begin{equation}\label{qwet}
(-1)^{k_1/2} 2^{3-k} \frac{k_1 k_2}{B_{k_1} B_{k_2}}\binom{k-2}{n} H_{n+1,n+k_2}=\frac{[E_{k_1}, E_{k_2}]_n}{(2\pi i)^n}
\end{equation}
where
\begin{equation} \label{rco}
[E_{k_1}, E_{k_2}]_n := \sum_{r=0}^n (-1)^r \binom{k_1+n-1}{n-r}
\binom{k_2+n-1}{r}E_{k_1}^{(r)}E_{k_2}^{(n-r)}.
\end{equation}
(The $n=0$ case of \e{qwet} is due to Rankin.) The Fourier coefficients of $E_{k_1}, E_{k_2}$ are rational and so it follows from \e{qwet} that the Fourier coefficients of $H_{n+1,n+k_2}$ are rational also. This is the key step used in \cite[Sect. 5]{Za77} and \cite[p. 202]{KZ84} to prove

\begin{theorem}[Manin's Periods Theorem]  \label{per}
For each $f \in \mathcal B_k$ there exist real numbers $\omega_+(f), \ \omega_-(f)$ with $\omega_+(f) \omega_-(f) = \bigl\langle f, f \bigr\rangle$ and
$$
L^*(f,s)/\omega_+(f), \quad L^*(f,w)/\omega_-(f) \in K_f
$$
for all $s,w$ with $1 \lqs s, w \lqs k-1$ and $s$ even, $w$ odd. Here $K_f$ is the finite extension of $\Q$ obtained by adjoining all the Fourier coefficients of $f$.
\end{theorem}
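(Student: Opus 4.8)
The plan is to feed the rationality of the kernel, which \e{qwet} provides, into a Galois-descent argument on $S_k$. First I would expand $H_{s,w}$ in the eigenbasis $\mathcal B_k$. Since $\s{g}{H_{\overline{s},\overline{w}}}=L^*(g,s)L^*(g,w)$ for every $g\in\mathcal B_k$ and distinct eigenforms are orthogonal for the pairing \e{pip}, the $g$-coefficient of $H_{s,w}$ is $\overline{L^*(g,\overline{s})\,L^*(g,\overline{w})}/\s{g}{g}$. For real $s,w$ the level-one eigenforms of weight $k$ have real Fourier coefficients (the Hecke operators are self-adjoint for \e{pip}), so $L^*(g,s),L^*(g,w)\in\R$ by the integral representation \e{lfs2}, giving
\[
  H_{s,w}=\sum_{g\in\mathcal B_k}c_g(s,w)\,g,\qquad c_g(s,w):=\frac{L^*(g,s)\,L^*(g,w)}{\s{g}{g}}.
\]

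Now fix $f\in\mathcal B_k$ and extract rationality. For $n\gqs 0$ and even $k_1,k_2\gqs 4$ with $k_1+k_2+2n=k$, equation \e{qwet} identifies $H_{n+1,\,n+k_2}$ with a nonzero rational multiple of $(2\pi i)^{-n}[E_{k_1},E_{k_2}]_n$; since $E_{k_1},E_{k_2}$ have rational Fourier coefficients by \e{eisk} and Rankin--Cohen brackets preserve this, $H_{n+1,n+k_2}$ lies in the $\Q$-subspace $S_k(\Q)$ of cusp forms with rational Fourier expansion. Let $O_f$ be the $\mathrm{Gal}(\overline{\Q}/\Q)$-orbit of $f$, let $V_f\subset S_k$ be its span, and let $\Pi_f$ be the projection of $S_k$ onto $V_f$ along the other Galois orbits; since the orbit decomposition of $S_k$ is defined over $\Q$, so is $\Pi_f$, hence $\Pi_f(H_{s,w})\in V_f\cap S_k(\Q)$. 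Written in the basis $O_f$, an element of $V_f\cap S_k(\Q)$ has $\overline{\Q}$-coordinates permuted by $\mathrm{Gal}(\overline{\Q}/\Q)$ exactly as the conjugates of $f$ are, so its $f$-coordinate is fixed by $\mathrm{Gal}(\overline{\Q}/K_f)$ and lies in $K_f$. That coordinate is $c_f(s,w)$, so
\[
  \frac{L^*(f,s)\,L^*(f,w)}{\s{f}{f}}=c_f(s,w)\in K_f
\]
for every pair $(s,w)$ arising from \e{qwet}, and also for its reflections $(k-s,w),(s,k-w),(k-s,k-w)$: the functional equation $L^*(f,s)=(-1)^{k/2}L^*(f,k-s)$ --- which follows from $f(-1/z)=z^kf(z)$ and \e{lfs2} just as \e{lek} does in the Eisenstein case --- makes $c_f$ invariant up to sign under these reflections.

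To finish I would choose the periods. Among the pairs covered above there is one, for instance $(2,k-5)$ (the triple $(k_1,k_2,n)=(4,k-6,1)$) and hence its reflection $(k-2,k-5)$, at which both completed $L$-values are nonzero, because $L(f,s)$ equals an absolutely convergent Euler product, hence does not vanish, for $\Re(s)>(k+1)/2$, and $k\gqs 12$ whenever $S_k\neq 0$. Put $\alpha:=c_f(k-2,k-5)\in K_f^{\times}$. Then $\omega_-(f):=L^*(f,k-5)$ and $\omega_+(f):=\alpha^{-1}L^*(f,k-2)$ satisfy $\omega_+(f)\omega_-(f)=\s{f}{f}$ and are real, since $\s{f}{f}>0$, the two $L^*$-values are real, and $\alpha\in K_f\subset\R$. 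For a general even critical $s$ one then gets $L^*(f,s)/\omega_+(f)\in K_f$ whenever $(s,w)$ and $(k-2,w)$ are both covered for some common odd $w$ with $L^*(f,w)\neq 0$, because then $L^*(f,s)/L^*(f,k-2)=c_f(s,w)/c_f(k-2,w)\in K_f$; chaining such relations on the even side, and the symmetric ones on the odd side, yields the theorem.

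The step I expect to be the real obstacle is this last chaining. The pairs \e{qwet} supplies directly satisfy $|s-w|\gqs 3$ and $s+w\lqs k-3$, so one must verify that, after adjoining the three functional-equation reflections of each, the resulting ``graph'' on critical arguments is connected; in the boundary configurations where it is not --- notably pairs with $|s-w|=1$ --- one supplements \e{qwet} with the additional rational-kernel identities of \cite[Sect.\ 5]{Za77} and \cite[p.\ 202]{KZ84} (again Rankin--Cohen brackets, now also allowing the quasimodular weight-two Eisenstein series). The remaining ingredients --- the eigenbasis expansion, the reality of completed $L$-values at level one, the $\Q$-rationality of $\Pi_f$, and the rescaling by an element of $K_f^\times$ --- are routine.
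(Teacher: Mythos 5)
Your architecture is the right one, and it is essentially the argument the paper itself delegates to \cite{Za77} and \cite{KZ84}: the eigenbasis expansion $H_{s,w}=\sum_{g\in\mathcal B_k} c_g(s,w)\,g$ with $c_g(s,w)=L^*(g,s)L^*(g,w)/\s{g}{g}$ for real $s,w$, the Galois-descent step showing that the $f$-coordinate of a cusp form with rational Fourier coefficients lies in $K_f$, and the normalization of the periods followed by chaining of ratios are all correct. Your instinct about where the real difficulty sits is also correct, and the gap there is genuine: the pairs furnished by \e{qwet} together with their functional-equation reflections do \emph{not} connect all critical arguments. Already for $k=12$ the pairs from \e{qwet} are $(1,4),(1,6),(1,8),(2,5),(2,7),(3,6)$, and after adjoining reflections the bipartite graph on critical arguments splits into the two components $\{4,6,8\}\cup\{1,3,9,11\}$ and $\{2,10\}\cup\{5,7\}$; no chain relates $L^*(f,2)$ to $L^*(f,k-2)=L^*(f,10)$. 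So, as written, your proof genuinely depends on the unproved supplementary kernels you defer to the references for.

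You do not, however, need to leave the paper to obtain them. Section \ref{proj} shows, via Corollary \ref{iprodprop}, holomorphic projection \e{hope} and the parametrization \e{d7}, that $H_{s,w}$ has rational (indeed explicitly computable) Fourier coefficients for \emph{every} pair of integers $s,w$ of opposite parity with $1\lqs s,w\lqs k-1$, not only those with $|s-w|\gqs 3$ and $s+w\lqs k-3$. With all such pairs available the bipartite graph is complete and your chaining becomes immediate: take $w_0=k-3$ (odd, and $L^*(f,w_0)\neq 0$ already from Hecke's bound since $k-3>k/2+1$ for $k\gqs 12$, which spares you the appeal to Deligne needed to handle $w_0=k-5$ when $k=12$), set $\alpha:=c_f(k-2,w_0)$, $\omega_-(f):=L^*(f,w_0)$ and $\omega_+(f):=\alpha^{-1}L^*(f,k-2)$; then $L^*(f,s)/\omega_+(f)=c_f(s,w_0)\in K_f$ for every even $s$ and $L^*(f,w)/\omega_-(f)=c_f(k-2,w)/\alpha\in K_f$ for every odd $w$, with $\omega_+(f)\omega_-(f)=\s{f}{f}$ and both periods real. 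The remaining ingredients of your write-up (reality of the completed $L$-values at level one, rationality of the orbit projection) are fine as they stand.
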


For this see also the discussion in \cite[Sect. 4.3]{DO10}.
In \cite{DO10} we showed another way to demonstrate the rationality of the Fourier coefficients of $H_{s,w}$ as summarized next. Suppose that $F:\H \to \C$ is smooth, transforms with holomorphic weight $k$ and has at most polynomial growth at $\ci$. Then there exists a unique cusp form in $S_k$, which we label $\pi_{hol}(F)$ so that
\begin{equation} \label{fms}
  \s{g}{F} = \s{g}{\pi_{hol}(F)}  \qquad \text{for all} \qquad g \in S_k.
\end{equation}
The cusp form $\pi_{hol}(F)$ is called the {\em holomorphic projection} of $F$. If $F(z) =O(y^{-\epsilon})$ as $y\to \infty$ for some $\epsilon>0$ and has Fourier expansion $F(z)=\sum_{\ell \in \Z} F_\ell(y) e^{2\pi i \ell x}$, then letting $g$ in \e{fms} be the $\ell$th Poincar\'e series and unfolding gives the formula
\begin{equation} \label{hope}
  \frac{(4\pi \ell)^{k-1}}{(k-2)!} \int_0^\infty F_\ell(y) e^{-2 \pi \ell y} y^{k-2} \, dy
\end{equation}
for the $\ell$th Fourier coefficient of $\pi_{hol}(F)$.

It follows from Corollary \ref{iprodprop} that
\begin{equation*}
  H_{s,w}= \frac 1{2 \cdot \pi^{k/2}} \pi_{hol}\left( (-1)^{k_2/2} y^{-k/2}E^*_{k_1}(\cdot, \overline u)E^*_{k_2}(\cdot, \overline v)\right)
\end{equation*}
for all $s,w \in \C$ where, by \e{sw},
$
  2u=s+w-k+1$ and $2v=-s+w+1$.
Therefore, we may find the Fourier coefficients of $H_{s,w}$ from those of $E^*_{k_1}(\cdot, \overline u)E^*_{k_2}(\cdot, \overline v)$ by the formula \e{hope}. The case when $s$ and $w$ are integers of opposite parity in the range $1\lqs s,w\lqs k-1$ corresponds exactly, by \cite[Lemma 3.2]{DO10}, to $u$ and $v$ being integers for which there exist positive even $k_1,k_2$ where $k_1+k_2=k$,
\begin{equation}
1-k_1/2 \lqs u \lqs k_1/2 \quad \text{and} \quad
1-k_2/2 \lqs v \lqs k_2/2.
\label{d7}
\end{equation}
Hence $u^*<k_1/2$ and $E^*_{k_1}(z,u)$ is in the upper triangle of Figure \ref{bfig}, having only positive Fourier coefficients. The same is true of $E^*_{k_2}(z,v)$ and so the Fourier coefficients of the product $E^*_{k_1}(z,u)E^*_{k_2}(z,v)$ are finite sums with terms given by Theorem \ref{main2}. In section 3 of \cite{DO10} this calculation is carried out and  \e{hope}  applied to find the $\ell$th Fourier coefficient of $H_{s,w}$. Precisely, this $\ell$th  coefficient is given by  the finite formula on the right of \cite[Eq. (1.12)]{DO10} divided by $2^{2-k}(k-2)!$. In particular it is rational as required for the proof of Theorem \ref{per}.

A further interesting expression for  $H_{s,w}(z)$ is found in \cite{DO13} in terms of the `double Eisenstein series'
\begin{equation}\label{dbleis3}
\es_{s, k-s}(z,w):=
\sum_{\substack{\g, \, \delta \in B \backslash \G \\  c_{\g  \delta^{-1}}  >0}}
\left( c_{\g  \delta^{-1}} \right)^{w-1} \left( \frac{j(\g,z)}{j(\delta,z)} \right)^{-s}j(\delta,z)^{-k}
\end{equation}
where $B:=\left\{(\smallmatrix 1
& n \\ 0 & 1 \endsmallmatrix ) : n\in \Z\right\}$ and  $c_{\g  \delta^{-1}}$ indicates the bottom left entry of the matrix $\g  \delta^{-1}$. This series  converges for $2<\Re(s)<k-2$ and $\Re(w)<\Re(s)-1, k-1-\Re(s)$ to a holomorphic cusp form  of weight $k$; see Section 4 of \cite{DO13}. By \cite[Thm. 2.3]{DO13},
\begin{equation*}
H_{s,w}(z) =\left[\frac{  e^{s i \pi/2} \G(s) \G(k-s) \G(k-w)\zeta(1-w+s)\zeta(1-w+k-s)}{2^{3-w}\pi^{k+1-w}   \G(k-1) }\right] \es_{s, k-s}(z,w).
\end{equation*}

We close with one more example of the holomorphic projection of a product. For $q=e^{2\pi i z}$, let $\Delta(z):=q\prod_{n=1}^\infty (1-q^n)^{24}$  be the discriminant function in $S_{12}$. Its Fourier coefficients are given by Ramanujan's tau function  $\tau(n)$. Recall the harmonic Eisenstein series $\ei_{-2}(z)$ of weight $-2$ with Fourier coefficients given in Theorem \ref{andr}.  The weight $10$ cusp form $\pi_{hol}(\Delta \cdot \ei_{-2})$ must be identically zero. Hence its $n$th  Fourier coefficient is zero and we obtain by \e{hope}, for all $n\in \Z_{\gqs 1}$,
\begin{multline} \label{fnl}
  \tau(n)\left(\zeta(3)-\frac{11}{16 n^3}\right)
  = -\sum_{m=1}^{n-1}  \tau(n-m) \sigma_{-3}(m)\\ - \sum_{m=1}^{\infty}  \tau(m+n) \sigma_{-3}(m)
\sum_{u=0}^2 \binom{u+8}{8} \frac{m^u n^9}{(m+n)^{u+9}}.
\end{multline}
This formula \e{fnl} may be compared with the more rapidly converging series for $\zeta(3)$ in \e{z3}, \e{lerchxxx}, \e{z3b} and \e{lerchxx}.

{\small
\bibliography{eisen}
}

{\small 
\vskip 5mm
\noindent
\textsc{Dept. of Math, The CUNY Graduate Center, 365 Fifth Avenue, New York, NY 10016-4309, U.S.A.}

\noindent
{\em E-mail address:} \texttt{cosullivan@gc.cuny.edu}
}

\end{document}